\title[More limits of the elliptic beta integral]{More basic hypergeometric limits of the elliptic hypergeometric beta integral}
\author{Fokko J. van de Bult}
\newtheorem{theorem}{Theorem}[section]
\newtheorem{lemma}[theorem]{Lemma}
\newtheorem{proposition}[theorem]{Proposition}
\newtheorem{corollary}[theorem]{Corollary}
\theoremstyle{definition}
\newtheorem{definition}[theorem]{Definition}
\newcommand{\rphis}[2]{{}_{#1\vphantom{#2}}\phi_{#2\vphantom{#1}}}
\newcommand{\rpsis}[2]{{}_{#1\vphantom{#2}}\psi_{#2\vphantom{#1}}}
\newcommand{\rWs}[2]{{}_{#1\vphantom{#2}}W_{#2\vphantom{#1}}}
\newcommand{\rphisx}[4]{\rphis{#1}{#2}\left( \begin{array}{c} #3 \end{array};q,#4\right)}
\newcommand{\rpsisx}[4]{\rpsis{#1}{#2}\left( \begin{array}{c} #3 \end{array};q,#4\right)}
\newcommand{\IR}{I\!R}
\begin{document}

\begin{abstract}
In this article we continue the work from \cite{vdBR1}. In that article Eric Rains and the present author considered the limits of the elliptic beta integral as $p\to 0$ while the parameters $t_r$ have a $p$-dependence of the form $t_r=u_rp^{\alpha_r}$ (for fixed $u_r$ and certain real numbers $\alpha_r$). In this article we again consider such limits, but now we let $p\to 0$ along a geometric sequence $p=xq^{sk}$ (for some integer $s$, while $k\to \infty$), and only allow $\alpha_r\in \frac{2}{s} \mathbb{Z}$. These choices allow us to take many more limits. In particular we now also obtain bilateral basic hypergeometric series as possible limits, such as the evaluation formula for a very well poised ${}_6\psi_6$. 
\end{abstract}

\maketitle

\section{Introduction}
While hypergeometric and basic hypergeometric series have existed for centuries, the generalization to elliptic hypergeometric series is relatively recent. The first elliptic hypergeometric identity was Frenkel and Turaev's \cite{FT} summation formula for a very well poised ${}_{10}V_9$. In the 15 years since this original paper the structure of the elliptic hypergeometric world has been thoroughly researched, yet much still remains unknown.

It was immediately clear that the Frenkel-Turaev summation is a direct generalization of the evaluation formula for a terminating very-well poised ${}_8W_7$, which itself generalizes the evaluation of a very-well poised ${}_7F_6$. Indeed an often used idea in trying to learn new properties of elliptic hypergeometric functions is to generalize a property of ($q$-)hypergeometric functions. One might hope that every result for basic hypergeometric functions has an analogue for elliptic hypergeometric functions and vice versa. The way in which this statement can be made precise in a concrete example is by showing that the limit of $p\to 0$ of the elliptic hypergeometric analogue becomes the basic hypergeometric original.

Due to the fact that non-terminating elliptic hypergeometric series typically do not converge, the analogues of non-terminating basic hypergeometric series identities are results for elliptic hypergeometric integrals. Thus the elliptic hypergeometric beta integral evaluation of Spiridonov \cite{Spi1} should be seen as the non-terminating version of Frenkel-Turaev's summation formula. Indeed one can obtain the terminating series evaluation as a specialization of the beta integral evaluation: In this specialization the series appears as a sum of residues. In this article we focus on the elliptic beta integrals, as the elliptic integrals are in this sense more general than the series.

Elliptic hypergeometric identities typically have a lot of parameters (apart from the obligatory $p$ and $q$). For example the elliptic beta integral evaluation has 5, that is $t_1,\ldots, t_6$ under the balancing condition $\prod_{r=1}^6 t_r=pq$. As we take the limit $p\to 0$ we have to describe what happens to these other parameters: whether they remain constant or depend on $p$ in some way. The limit obtained depends on the way these parameters change as $p \to 0$. Thus one elliptic hypergeometric identity can typically have many basic hypergeometric analogues. In this article we attempt to obtain all interesting limits of the elliptic beta integrals where the extra parameters are all of the form $t_r=u_r p^{\alpha_r}$ for some constant $u_r$.  For different values of $\alpha_r$ one would expect different limits to appear.

Consider an evaluation formula $\int I(z;p) dz = C(p)$, where the dependence of $I$ on parameters other that $z$ and $p$ is suppressed. If we rescale the formula to $\frac{1}{C(p)} \int I(z;p) dz =1$ we can certainly take the limit as $p\to 0$ and the result will be 1. This does not lead to an interesting result. Therefore we restrict the methods we are allowed to use to take the limit. In particular we are not allowed to apply some elliptic hypergeometric identity before taking the limit. The goal is to find limits of such identities and therefore we must take different limits on both sides of the equation. In essence, the only method we do allow is to interchange limit and integral, more details are in Section \ref{secgenmeth}.

By systematically determining the limits for different values of $\alpha_r$ one could perhaps obtain new basic hypergeometric identities (though in this article we do not really do that). More importantly however, the goal is to better understand the known basic hypergeometric identities. Our analysis also determines the limits between the basic hypergeometric identities. Moreover by tallying which identities are derived from the elliptic beta integral we can mark the remaining identities as equations which still need an elliptic analogue. 

In previous articles Eric Rains and the author have already considered very similar problems. In \cite{vdBR1} we considered limits of the elliptic beta integral which can be obtained by a direct limit $p\to 0$, which led to basic hypergeometric integrals and unilateral basic hypergeometric series. Subsequently we wrote a trilogy, the first part \cite{vdBR2} of which covered the limits of the elliptic hypergeometric biorthogonal functions first obtained by Spiridonov \cite{Spi2}, whose measure is the elliptic beta integral evaluation. Amongst the limits we found the entire $q$-Askey scheme \cite{KLS} (including the limits between the families of orthogonal polynomials in this scheme).
In the second part \cite{vdBR3} we covered the extension to the multivariate elliptic hypergeometric biorthogonal functions discovered by Rains \cite{Rainsabelian}. Amongst other limits we obtained both the Macdonald polynomials and the Koornwinder polynomials.
The final part \cite{vdBR4} of the trilogy dealt with the measures for the multivariate biorthogonal functions, the $BC_n$ symmetric elliptic hypergeometric Selberg integral from \cite{Rainstrafo}. 

The one thing we did not do previously is to systematically obtain all limiting bilateral series. 
In particular we would like to show that the evaluations of a very well poised ${}_6\psi_6$ is a limit of the elliptic beta integral as well. We could not obtain that limit before as we considered limits $p\to 0$ in a continuous way. This restricts us to take limits of $\Gamma(p^{\alpha}z)$ for $\alpha \in [0,1]$ (where $\Gamma$ denotes the \textit{elliptic} gamma function). If we want to extend this result for other $\alpha$ we can use the difference equation: $\Gamma(p^{\alpha}z) = \frac{1}{\theta(p^{\alpha} z;q)} \Gamma(p^{\alpha+1} z)$. This allows us to ensure that all arguments of elliptic gamma functions are of the form $\Gamma(p^{\alpha} z)$ with $\alpha \in [0,1]$. However this comes at the cost of introducing theta functions, whose $p\to 0$ limits do not exist. However, if we specialize $p=xq^v$ such $\alpha v\in \mathbb{Z}$ we obtain $\theta(p^{\alpha} z;q) = \theta( x^{\alpha} q^{\alpha v} z;q) = (-\frac{1}{zx^{\alpha}})^{\alpha v} q^{-\binom{\alpha v}{2}} \theta(x^{\alpha} z;q)$. That is, under this specialization we can take the limit of the theta functions, as long as we allow a proper prefactor.

In this article we will restrict $p$ to a geometric sequence, ensuring that we can obtain those extra limits. Note that for every choice of rational $\alpha_r$'s there exist geometric sequences which ensure that we can take the limit as $p\to 0$ along that geometric sequence. 
For different values of $x$ we take the limit along a different geometric sequence, and thus the limit we obtain is typically dependent on $x$. But of course, multiplying $x$ by the proper power of $q$ (so that the geometric sequence $p=xq^v$ shifts by one step) will not change the limit, so the limits will be elliptic functions of $x$. 

The methods, and even a lot of the calculations, in this article can also be extended to the multivariate level. The reason we consider only the univariate integrals in this article is that the complications we want to focus on already fully appear on the univariate level. The most major extra complication that arises for multivariate analogues is that limits which are a sums of multiple series (for example the evaluation formula for a sum of two ${}_8W_7$'s) become very complicated expressions. An example of such a complicated expression is given by the  measure for multivariate big $q$-Jacobi polynomials in \cite{Stok}. To understand these expressions would detract too much from the issues we want to stress in this article.

The notational conventions in this article are all quite standard, but for convenience summed up in the next section. Subsequently we introduce the elliptic beta integral of which we want to study the limits. Section \ref{secgenmeth} discusses in more detail what limits we actually want to consider and can be seen as an extension of this introduction. Section \ref{secasym} discusses the asymptotic behavior of the elliptic gamma function and $\theta(x;q)$ as $p\to 0$. The next two section discuss integral and series limits respectively, and give the proofs of the limits; though they do not identify which are the interesting limits.
Section \ref{secsymres} discuss some symmetries of the residues of the integrand of the elliptic beta integral, which reduce the number of different expressions we obtain as limits.
The calculations to determine which are interesting limits are quite non-trivial so we first need an intermediate section on the Weyl group of type $E_6$ to most efficiently make use of the symmetries of the situation. Then we have two section which do the essentially combinatorial calculations necessary to determine which limits are interesting, and a section which consolidates the results of these calculations. In the final section we present a long list of all interesting limits we can obtain in the way described.

\section{Pochhammer symbols, theta functions and elliptic gamma functions}
In this section we introduce notation for the elliptic gamma function and related functions, and describe their most basic properties.

For essentially all parameters, with the notable exception of $p$, we will make a choice of logarithm. This allows us to write things like $x^{\alpha}= \exp( \alpha \log(x))$ for non-integer $\alpha$ without further explanation. Moreover throughout the article we assume $|q|,|p|<1$, which will ensure the infinite products defining the $q$-Pochhammer symbols, theta functions and elliptic gamma functions defined below converge.

The $q$-Pochhammer symbol is given by 
\[
(x;q)_n := \prod_{r=0}^{n-1} (1-xq^{r}), \qquad (x;q) = (x;q)_{\infty} := \prod_{r\geq 0} (1-xq^{r}),
\]
the $pq$-symbol by 
\[
(x;p,q) := \prod_{r,s\geq 0} (1-xp^rq^s),
\]
and the theta function by 
\[
\theta(x;q) := (x,q/x;q).
\]

The elliptic gamma function is given by 
\[
\Gamma(z) = \Gamma(z;p,q) := 
\frac{(pq/z;p,q)}{z;/p,q)} = 
\prod_{r,s\geq 0} \frac{1-p^{r+1}q^{s+1}/z}{1-p^rq^sz}.
\]
In particular by $\Gamma$ we do not imply Euler's gamma function (i.e. the one where $\Gamma(n+1)=n!$). 

For all these functions we use the notation that multiple parameters or $\pm$ signs in the part before the semicolon indicate a product of functions. For example
\[
(a,b,c;q) = (a;q) (b;q) (c;q), \qquad \Gamma(az^{\pm 1}) = \Gamma(az) \Gamma(a/z).
\]

The reflection equations for $\theta$ and $\Gamma$ are given by 
\[
\theta(z;q) = \theta(q/z;q), \qquad \Gamma(z) = \frac{1}{\Gamma(pq/z)}.
\]
These two functions satisfy the difference equations
\[
\theta(qz;q) = -\frac{1}{z} \theta(z;q), \qquad 
\Gamma(pz) = \theta(z;q) \Gamma(z), \qquad 
\Gamma(qz) = \theta(z;p) \Gamma(z).
\]
The fact that the elliptic gamma function satisfies a difference equation for both $p$ and $q$ is due to its $p\leftrightarrow q$ symmetry $\Gamma(z;p,q) = \Gamma(z;q,p)$. We will often need to use many instances of the difference equation for the theta function at once, using the formula
\[
\theta(q^kz;q) = \left( -\frac{1}{z}\right)^k q^{-\binom{k}{2}}\theta(z;q), \qquad k\in \mathbb{Z}.
\]

Observe that $\theta(z;q)$ is a holomorphic function on $\mathbb{C}^* = \mathbb{C} \setminus \{0\}$, with zeros at $z\in q^{\mathbb{Z}}$, and that $\Gamma(z)$ is an meromorphic function on $\mathbb{C}^*$ with poles at $z\in p^{-\mathbb{Z}_{\geq 0}}q^{-\mathbb{Z}_{\geq 0}}$ and zeros at $z\in p^{\mathbb{Z}_{>0}} q^{\mathbb{Z}_{>0}}$. For generic choices of $p$ and $q$ these poles and zeros are all simple. 
The residue of the elliptic gamma function at 1 is
\[
Res( \Gamma(z), z=1) = - \frac{1}{(p;p)(q;q)},
\]
and the other residues can be obtained from this residue by observing that 
\[
\frac{\Gamma(z)}{\Gamma(zp^{k} q^{l})} = \frac{1}{
p^{-l\binom{k}{2}} q^{-k\binom{l}{2}} \left( -\frac{1}{z}\right)^{kl}
\prod_{s=0}^{l-1} \theta(q^s z;p) \prod_{r=0}^{k-1} \theta(p^rz;q)}
\] 
has a removable singularity at $z=p^{-k} q^{-l}$  (for $k,l\geq 0$).

Let us end this section with a note on integration contours. The contours of all integrals appearing in this article are deformations of the unit circle whose deformations serve to ensure that certain poles are kept inside the contour, while others are left outside. For an elliptic hypergeometric integral, the integrand is a product of elliptic gamma functions, and the poles $z=\frac{1}{a} q^{-\mathbb{Z}_{\geq 0}} p^{-\mathbb{Z}_{\geq 0}}$ of terms $\Gamma(az)$ should be kept outside the contour, while the poles $z=aq^{\mathbb{Z}_{\geq 0}} p^{\mathbb{Z}_{\geq 0}}$ of terms $\Gamma(a/z)$ in the integrand should be kept inside the contour. For an basic hypergeometric integral, the poles of terms $1/(az;q)$ should be kept outside the contour, while the poles of terms $1/(a/z;q)$ should be kept inside the contour. We will use this convention throughout the article without explicitly mentioning it in each case. 

\section{The elliptic hypergeometric beta integral}
The elliptic beta integral evaluation was discovered first by Spiridonov \cite{Spi1}. It can be interpreted as an integral version of Frenkel and Turaev's \cite{FT} summation of an elliptic hypergeometric series. It is given by 
\begin{equation}\label{eqellbetaeval}
\frac{(p;p)(q;q)}{2} \int_{C} \frac{\prod_{r=1}^6 \Gamma(t_r z^{\pm 1})}{\Gamma(z^{\pm 2})} \frac{dz}{2\pi i z} = \prod_{1\leq r<s\leq 6} \Gamma(t_rt_s),
\end{equation}
under the balancing condition $\prod_{r=1}^6 t_r =pq$. The contour $C$ is chosen as a deformation of the unit circle which includes the poles of the integrand at $z=t_rp^l q^k$ for $k,l \geq 0$ and excludes the poles at $\frac{1}{t_r} p^{-l} q^{-k}$ for $k,l\geq 0$. Such a contour exists for generic values of the parameters (explicitly, whenever $t_rt_s \not \in p^{\mathbb{Z}_{\leq 0}} q^{\mathbb{Z}_{\leq 0}}$ for all (possibly equal) $r$ and $s$). In particular we can take the unit circle as a contour whenever $|t_r|<1$ for all $r$.

There are extensions of this integral with more parameters which also satisfy interesting equations, though not necessarily evaluations. In particular we are interested in the more general integral 
\begin{equation}\label{eqellbetagenm}
E_m = \frac{(p;p)(q;q)}{2} \int_{C} \frac{\prod_{r=1}^{6+2m} \Gamma(t_r z^{\pm 1})}{\Gamma(z^{\pm 2})} \frac{dz}{2\pi i z} 
\end{equation}
under the balancing condition $\prod_{r=1}^{6+2m} t_r = (pq)^{1+m}$. The case $m=0$ thus satisfies an evaluation formula. For $m=1$ we obtain transformation formulas \cite{Rainstrafo} which show that the $E_1$ has the symmetry group $W(E_7)$, the Weyl group of type $E_7$. For $m=2$ there exist multibasic identities \cite{Rainslwood}, for example relating $E_2( ;p,q)$ to $E_2(;p,q^2)$ for certain specific values of the $t_r$ on both sides.

\section{The general framework of the limits}\label{secgenmeth}
This section discusses the main goals of the paper. We describe which limits we want to take, and how we aim to take those limits.

We would like to obtain limits of the elliptic beta integral $E_m$ for all $m$. The analysis we perform can be split into two parts, an analytical part and a combinatorial part. The analytical part of the analysis, given in Section \ref{secasym}-\ref{secsymres}, is performed for all $m$. 
We do not know how to perform the combinatorial part for all $m$ at once. In this article, in Sections \ref{secweyl}-\ref{seclimits}, we perform the combinatorial analysis for the $m=0$ case. In a forthcoming article we will also perform the analysis for the $m=1$ case. For higher values of $m$ we do not know a feasible method of performing the combinatorial analysis completely.
%

Since the parameters $t_r$ of the elliptic beta integral satisfy the balancing condition $\prod_r t_r=(pq)^{m+1}$, which depends on $p$, we can not take the limit as $p\to 0$ while keeping all other parameters constant. In particular we must describe how those parameters depend on $p$ as $p\to 0$. We will consider the cases where each parameter $t_r= u_r p^{\alpha_r}$ for some $\alpha_r\in \mathbb{R}$ and some non-zero complex number $u_r\in \mathbb{C}^*$ which are independent of $p$. The parameter $q$ will remain fixed, independently of $p$. The balancing condition of the $t_r$ thus becomes equivalent to the balancing conditions $\sum_r \alpha_r=m+1$ and $\prod_r u_r = q^{m+1}$ for the new parameters.

As noted previously, the elliptic beta integral with $m=0$ satisfies an evaluation formula. We can normalize this equation to 
\[
\frac{(p;p)(q;q)}{2\prod_{1\leq r<s\leq 6} \Gamma(t_rt_s)} \int_{C} \frac{\prod_{r=1}^6 \Gamma(t_r z^{\pm 1})}{\Gamma(z^{\pm 2})} \frac{dz}{2\pi i z} = 1.
\]
As this rescaled integral is constant function it is obvious that the limit $p\to 0$ can be taken and that the result is still 1. Of course, this is not an interesting result. Thus we want to find a limit of the elliptic beta integral which is some basic hypergeometric function which is not trivially equal to 1. To do this we restrict the allowed methods of obtaining the limit: in particular, we insist on obtaining this limit by interchanging limit and integral, or a similar technique. Moreover we allow ourselves to change the expression in a few simple ways, described below, before changing limit and integral.  On the other hand, we exclude techniques which first apply an (elliptic) hypergeometric identity before taking the limit. 

The most common change we perform before taking the limit is to multiply the integral by a rescaling factor which is a product of monomials in the parameters of the integral other than $p$ (e.g. $u_r$ and $q$) with exponents depending on $v$ (where we write $p=xq^v$, so $v$ is in essence $\log(p)$). Note that such a rescaling factor will only be necessary for $m>0$, as for $m=0$ we know that the integral converges as $p\to 0$ without multiplying it first with a prefactor.

Interchanging limit and integral only makes sense if we can keep the same contour for all values of $p$. Moreover, the integrand needs to show the same kind of asymptotic behavior as the integral. Otherwise a rescaling factor which makes the integrand have a proper limit will make the limit of the integral vanish (it can not occur that the integral blows up in the limit while the integrand has a proper limit, as the integral is bounded by the maximum value of the integrand times the fixed length of the contour). 

As mentioned, we allow ourselves to rewrite the elliptic integral before interchanging limit and integral. Ignoring the possible rescaling of the integral, these changes can take one or more of the following forms:
\begin{itemize}
\item We can break the symmetry of the integrand, by multiplying with a proper elliptic function (see Section \ref{sec6} for details of the symmetry breaking);
\item We can shift the contour by some $p$-dependent amount; and/or
\item We pick up (a $p$-dependent number of) residues while moving the contour over the associated poles. Subsequently we may shift the summation variable of this sum of residues. 
\end{itemize}
In the latter case, the condition that we should be able to interchange integral and limit is interpreted that we want to be able to interchange sum and limit. 

It turns out that that the limit $p\to 0$ often does not exist, while the limit for $p\to 0$ along $q^s$-geometric sequences for some $s\in \mathbb{N}$ does exist. Thus we write $p=xq^{v}$ and take limits as $v\to \infty$ under the condition $v\in s\mathbb{Z}$. Here $s$ is typically twice the smallest common denominator of rationally chosen $\alpha_r$'s (so that $q^{\frac12 v\alpha_r}$ is always an integer power of $q$).

\section{Asymptotic behavior of the elliptic gamma function and the theta function}\label{secasym}
In this section we describe the limiting behavior of the functions of the elliptic beta integral and the theta function when $p=xq^v$ and $v\to \infty$. The arguments of the functions are allowed to depend on $p$ as some constant times some (rational) power of $p$, i.e. the arguments are $t p^{\alpha}$. To obtain nice limits we then have to assume that $v$ is in some arithmetic sequence such that $v\alpha\in 2\mathbb{Z}$ for all relevant $v$. This last  condition explains why we insist $\alpha\in \mathbb{Q}$. 

Several bounds on $q$-Pochhammer symbols and theta functions were already given in the appendix of \cite{vdBR4}. Here we present a stronger theorem which extends those results.
\begin{theorem}
Define 
\[
g(x) = \frac16 x(x-1)(2x-1).
\]
Let $p=xq^v$. For any $M>0$ and any $\epsilon>0$ there exist $C_1, C_2>0$ such that
\[
C_1 \leq \left| \Gamma(p^{\alpha} z) q^{\frac12 v^2 (g(\alpha)-g(\{\alpha\})) }
\left(z^{\binom{\alpha}{2} - \binom{\{\alpha\}}{2}}
x^{g(\alpha) - g(\{\alpha\})}
q^{\frac12 \left( \binom{\{\alpha\}}{2} - \binom{\alpha}{2} \right)} \right)^v
 \right| \leq C_2,
\]
for all $\alpha \in [-M,M]$, all $z$ in the annulus $\frac{1}{M} \leq |z| \leq M$, all $v$ such that $|p|<|q|$ and such that $\left|1-\frac{p^{\alpha} z}{y} \right| >\epsilon$ for all poles and zeros $y$ of $\Gamma(p^{\alpha} z)$. 
\end{theorem}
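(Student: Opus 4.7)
The plan is to reduce the claim to the special case $\alpha\in[0,1)$ using the difference equation $\Gamma(pw)=\theta(w;q)\Gamma(w)$. Writing $k=\lfloor\alpha\rfloor$ and iterating yields
\[
\Gamma(p^{\alpha}z)=\Gamma(p^{\{\alpha\}}z)\prod_{j=0}^{k-1}\theta(p^{j+\{\alpha\}}z;q)\qquad(k\ge 0),
\]
with the theta product replaced by its reciprocal when $k<0$. Since $\alpha$ ranges over the fixed compact interval $[-M,M]$, $k$ lies in a finite set, so it suffices to bound the residual factor $\Gamma(p^{\{\alpha\}}z;p,q)$ uniformly and to track the asymptotics of each theta factor in the product as $v\to\infty$. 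In the residual range $\alpha=\{\alpha\}\in[0,1)$ the theorem's prefactor is identically $1$ (since $g$ and $\binom{\cdot}{2}$ are then evaluated at equal arguments), so the statement reduces to a uniform two-sided bound on $|\Gamma(p^{\{\alpha\}}z;p,q)|$. This follows from the defining infinite product: every factor except those with $r=\{\alpha\}=0$ in the denominator is of the form $1-y$ with $|y|$ dominated by a $(p,q)$-geometric tail, while the exceptional $(1-q^{s}z)$ factors are bounded away from $0$ exactly by the $\epsilon$ hypothesis.

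For each theta factor $\theta(p^{\beta}z;q)$ with $\beta=j+\{\alpha\}>0$, I would use $\theta(w;q)=(w;q)(q/w;q)$ and set $N=\lfloor v\beta\rfloor$. In $(qp^{-\beta}/z;q)$, split the product at $s=N$: the tail $s\ge N$ is uniformly bounded, and in the head $s<N$ each factor rewrites as
\[
1-\tfrac{q^{s+1-v\beta}}{x^{\beta}z}=-\tfrac{q^{s+1-v\beta}}{x^{\beta}z}\bigl(1-q^{v\beta-s-1}x^{\beta}z\bigr),
\]
the second piece bounded by the $\epsilon$ hypothesis. Collecting the extracted monomials gives
\[
\theta(p^{\beta}z;q)=(\mathrm{bounded})\cdot(-1)^{N}x^{-\beta N}z^{-N}q^{\binom{N+1}{2}-Nv\beta}.
\]
Setting $\delta=v\beta-N\in[0,1)$, a direct calculation yields $\binom{N+1}{2}-Nv\beta=-\binom{v\beta}{2}+\tfrac{1}{2}(\delta^{2}-\delta)$ and $N=v\beta-\delta$, so the explicit factors agree with the ``ideal'' expression $x^{-v\beta^{2}}z^{-v\beta}q^{-\binom{v\beta}{2}}$ up to $\delta$-dependent bounded corrections. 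The case $\beta<0$ is symmetric via $\theta(w;q)=\theta(q/w;q)$.

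Finally, summing over $j=0,\ldots,k-1$ using the identities $\sum_{j=0}^{k-1}(j+\{\alpha\})=\binom{\alpha}{2}-\binom{\{\alpha\}}{2}$ and $\sum_{j=0}^{k-1}(j+\{\alpha\})^{2}=g(\alpha)-g(\{\alpha\})$ (the latter from the polynomial identity $g(x+1)-g(x)=x^{2}$, verified by direct expansion), the accumulated leading monomials in $x$, $z$ and $q$ match exactly the theorem's prefactor, which therefore neutralizes them and leaves only bounded terms. The main technical obstacle is ensuring uniformity across non-integer values of $v\beta$ and across the discontinuity of $\{\alpha\}$ at integer $\alpha$; both issues are resolved by observing that $g(0)=g(1)=\binom{0}{2}=\binom{1}{2}=0$, making the theorem's prefactor continuous across integers, and that each $\delta$-correction per theta factor is bounded by a constant depending only on $|q|$, with at most $|k|\le M$ theta factors contributing, so the cumulative error is uniformly bounded and absorbs into $C_{1},C_{2}$.
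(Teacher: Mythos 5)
Your argument is correct and follows essentially the same route as the paper: reduce to $\{\alpha\}\in[0,1)$ via the difference equation, extract the dominant monomial from each of the at most $M+1$ theta factors, and recombine the exponents using the identities $\sum_{j=0}^{\lfloor\alpha\rfloor-1}(j+\{\alpha\})=\binom{\alpha}{2}-\binom{\{\alpha\}}{2}$ and $\sum_{j=0}^{\lfloor\alpha\rfloor-1}(j+\{\alpha\})^{2}=g(\alpha)-g(\{\alpha\})$, exactly as in the paper's proof. The only difference is one of packaging: where the paper applies $\theta(q^{k}z;q)=(-1/z)^{k}q^{-\binom{k}{2}}\theta(z;q)$ with $k=\lfloor v(\alpha-r)\rfloor$ and then cites the uniform bounds of Lemmas A.1 and A.2 of \cite{vdBR4}, you rederive those bounds directly by splitting the $q$-Pochhammer products at $s=\lfloor v\beta\rfloor$, which makes the argument self-contained but amounts to the same computation.
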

\begin{proof}
We use the notation $\{\alpha\} \in [0,1)$ and $\lfloor \alpha \rfloor \in \mathbb{Z}$ for the fractional part, respectively floor, of $\alpha$, thus $\alpha = \{\alpha\} + \lfloor \alpha \rfloor$. Using the difference equations of the elliptic gamma function and the theta functions  we can write
\begin{align*}
\Gamma(p^{\alpha} z) &= \Gamma(p^{\{\alpha\}} z) \prod_{r=1}^{\lfloor \alpha \rfloor} \theta(p^{\alpha-r} z;q) 
= \Gamma(p^{\{\alpha\}} z) \prod_{r=1}^{\lfloor \alpha \rfloor}  \theta(x^{\alpha-r} q^{v(\alpha-r)} z;q) 
\\ &= \Gamma(p^{\{\alpha\}} z) \prod_{r=1}^{\lfloor \alpha \rfloor}  \theta(x^{\alpha-r} q^{\{v(\alpha-r)\}} z;q) \left( -\frac{1}{x^{\alpha-r} q^{\{v(\alpha-r)\}} z}\right)^{\lfloor v(\alpha-r)\rfloor} q^{-\binom{\lfloor v(\alpha-r)\rfloor}{2}}.
\end{align*}
Now observe that 
\[
\binom{\lfloor a \rfloor}{2} + \lfloor a\rfloor \{a\} 
= \frac12 (a-\{a\}) (a-\{a\}-1) + (a-\{a\}) \{a\} 
= \binom{a}{2}  - \binom{\{a\}}{2},
\]
which allows us to simplify the exponent of $q$ in each term. We can collect the powers of $z$, $x$ and $q$ in the product together. For the exponent of $z$ we calculate
\[
\sum_{r=1}^{\lfloor \alpha \rfloor} \lfloor v(\alpha-r)\rfloor = 
\lfloor \alpha \rfloor \lfloor v\alpha \rfloor - \frac{v}{2} \lfloor \alpha \rfloor (\lfloor \alpha\rfloor +1)
= v\binom{\alpha}{2} - v\binom{\{\alpha\}}{2} - \{v\alpha\} \lfloor \alpha \rfloor,
\]
for the exponent of $x$
\[
\sum_{r=1}^{\lfloor \alpha \rfloor}  (\alpha-r) \lfloor v(\alpha -r)\rfloor
= v(g(\alpha) - g(\{\alpha\}))+ \{v\alpha \} \left( \binom{\{\alpha\}}{2} - \binom{\alpha}{2} \right),
\]
and for the exponent of $q$
\[
\sum_{r=1}^{\lfloor \alpha \rfloor} \binom{ v(\alpha-r)}{2} - \binom{\{v(\alpha-r)\}}{2} 
= \frac12 v^2 (g(\alpha)-g(\{\alpha\})) 
+ \frac12 v \left( \binom{\{\alpha\}}{2} - \binom{\alpha}{2} \right)
-\frac12 \binom{\{v\alpha\}}{2} \lfloor \alpha \rfloor.
\]
Thus we find that
\begin{align*}
\Gamma(p^{\alpha} z) &
z^{v  \left(\binom{\alpha}{2} - \binom{\{\alpha\}}{2} \right)}
x^{v(g(\alpha) - g(\{\alpha\}))}
q^{\frac12 v^2 (g(\alpha)-g(\{\alpha\})) 
+ \frac12 v \left( \binom{\{\alpha\}}{2} - \binom{\alpha}{2} \right)}
\\ &= \Gamma(p^{\{\alpha\}} z) 
 z^{\{v\alpha\} \lfloor \alpha \rfloor} x^{\{v\alpha \} \left( \binom{\alpha}{2} - \binom{\{\alpha\}}{2} \right)} q^{\frac12 \binom{\{v\alpha\}}{2} \lfloor \alpha \rfloor}
 \prod_{r=1}^{\lfloor \alpha \rfloor}  \theta(x^{\alpha-r} q^{\{v(\alpha-r)\}} z;q)  \left( -1\right)^{\lfloor v(\alpha-r)\rfloor} .
 \end{align*}
 Now every term on the right hand side can be uniformly bounded, using Lemma A.1 of \cite{vdBR4} for the theta functions and Lemma A.2 for the elliptic gamma function. Note that the number of theta functions appearing is itself also bounded by $M+1$.
\end{proof}
The last equation in the proof actually shows that when $v\alpha \in 2\mathbb{Z}$ we  have
\begin{align*}
\Gamma(p^{\alpha} z) &
z^{v  \left(\binom{\alpha}{2} - \binom{\{\alpha\}}{2} \right)}
x^{v(g(\alpha) - g(\{\alpha\}))}
q^{\frac12 v^2 (g(\alpha)-g(\{\alpha\})) 
+ \frac12 v \left( \binom{\{\alpha\}}{2} - \binom{\alpha}{2} \right)}
= \Gamma(p^{\{\alpha\}} z) 
\prod_{r=1}^{\lfloor \alpha \rfloor}  \theta(x^{\alpha-r}z;q).
 \end{align*}
So that the limit $p\to 0$, with $p$ chosen along a geometric sequence $p\in x q^{\frac{2}{\alpha}\mathbb{Z}}$, of a rescaling of the elliptic gamma function $\Gamma(p^{\alpha} z)$ can be easily calculated. For this we use  
\[
\lim_{p\to 0} \Gamma(p^{\alpha} z) = \begin{cases}  \frac{1}{(z;q)} & \alpha =0, \\
1 & 0<\alpha <1. \end{cases}
\]

We can derive similar theorems for theta functions as a corollary (theta functions are the quotient of two elliptic gamma functions). A direct proof would allow for vastly better constants, but we don't really care about the constants.
\begin{corollary}
Let $p=xq^v$.
\begin{itemize}
\item 
For any $M>0$ and any $\epsilon>0$ there exist $C_1, C_2>0$ such that 
\[
C_1 \leq \left| \theta(p^{\alpha} z;q) q^{\frac12 v^2 \alpha^2 }
\left(z^{\alpha} x^{\alpha^2 }q^{-\frac12 \alpha} \right)^v \right| \leq C_2,
\]
for all $\alpha \in [-M,M]$, all $z$ in the annulus $\frac{1}{M} \leq |z| \leq M$, all $v$ such that $|p|<|q|$ and such that $\left|1-\frac{p^{\alpha} z}{y} \right| >\epsilon$ for all zeros $y$ of $\theta(p^{\alpha} z;q)$. 
\item 
For any $M>0$ and any $\epsilon>0$ there exist $C_1, C_2>0$ such that 
\[
C_1 \leq \left| \theta(p^{\alpha} z;p) \left(q^{ \binom{\alpha}{2} - \binom{\{\alpha\}}{2} } \right)^v
 \right| \leq C_2,
\]
for all $\alpha \in [-M,M]$, all $z$ in the annulus $\frac{1}{M} \leq |z| \leq M$, all $v$ such that $|p|<|q|$ and such that $\left|1-\frac{p^{\alpha} z}{y} \right| >\epsilon$ for all zeros $y$ of $\theta(p^{\alpha} z;p)$. 
\end{itemize}
\end{corollary}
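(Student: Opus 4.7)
The plan is to deduce both bounds from the preceding theorem by expressing theta as a ratio of two elliptic gamma functions, using the difference equations $\Gamma(pw)=\theta(w;q)\Gamma(w)$ and $\Gamma(qw)=\theta(w;p)\Gamma(w)$. For the first bound I would write $\theta(p^\alpha z;q)=\Gamma(p^{\alpha+1}z)/\Gamma(p^\alpha z)$ and apply the theorem to both factors. Denoting the theorem's rescaling prefactor by $A(\alpha,z)$, the quantity $|\theta(p^\alpha z;q)|\cdot|A(\alpha+1,z)/A(\alpha,z)|$ is squeezed between $C_1/C_2$ and $C_2/C_1$, and a routine computation using $\{\alpha+1\}=\{\alpha\}$, $\binom{\alpha+1}{2}-\binom{\alpha}{2}=\alpha$, and $g(\alpha+1)-g(\alpha)=\alpha^2$ shows that $A(\alpha+1,z)/A(\alpha,z)$ is exactly the rescaling factor stated in the corollary.

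For the second bound, the corresponding identity is $\theta(p^\alpha z;p)=\Gamma(p^\alpha(qz))/\Gamma(p^\alpha z)$, so I would apply the theorem with annulus variable $qz$ in the numerator and $z$ in the denominator. Since $\alpha$ is unchanged, only the $z$-dependent piece of $A$ differs between the two applications, giving $A(\alpha,qz)/A(\alpha,z)=q^{v(\binom{\alpha}{2}-\binom{\{\alpha\}}{2})}$, which matches the claimed factor.

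The main obstacle is that the theorem's $\epsilon$-condition requires $p^\alpha z$ to stay $\epsilon$-away from all poles and zeros of each individual gamma factor, whereas the corollary only demands avoidance of zeros of $\theta$ itself. To bridge the gap, I would not apply the theorem's conclusion as a black box but instead work with the explicit intermediate formula derived in its proof, namely $\Gamma(p^\alpha z)=\Gamma(p^{\{\alpha\}}z)\,B(\alpha,z)\prod_{r=1}^{\lfloor\alpha\rfloor}\theta(x^{\alpha-r}q^{\{v(\alpha-r)\}}z;q)$ with $B$ uniformly bounded above and away from zero. In the quotient for $\theta(p^\alpha z;q)$, all but one of the theta factors telescope away, leaving a single $\theta(x^\alpha q^{\{v\alpha\}}z;q)$ whose zeros correspond (after the rescaling $p^\alpha=x^\alpha q^{v\alpha}$) exactly to the zeros of $\theta(p^\alpha z;q)$ at $p^\alpha z\in q^{\mathbb{Z}}$. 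For the second bound the telescoping is not as clean, but each ratio $\theta(q\cdot\mathrm{arg};q)/\theta(\mathrm{arg};q)$ simplifies via $\theta(qw;q)=-w^{-1}\theta(w;q)$ to an explicitly bounded expression, and the leftover factor $\Gamma(p^{\{\alpha\}}qz)/\Gamma(p^{\{\alpha\}}z)=\theta(p^{\{\alpha\}}z;p)$ is bounded because its $p$-Pochhammer representation tends to $1$ when $\{\alpha\}\in(0,1)$ and reduces to $\theta(z;p)$ for $\{\alpha\}=0$, in which case avoidance of $z=1$ follows from the corollary's own $\epsilon$-hypothesis.
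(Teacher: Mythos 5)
Your proposal is correct and follows essentially the same route as the paper: the paper likewise writes $\theta(p^{\alpha}z;q)=\Gamma(p^{\alpha+1}z)/\Gamma(p^{\alpha}z)$ (and similarly with $q$-shift for $\theta(p^{\alpha}z;p)$) and invokes the preceding theorem, admitting that the mismatch between the $\epsilon$-conditions requires repeating the theorem's proof. Your telescoping of the intermediate identity from that proof is precisely the repair the paper alludes to without carrying out, so you have in fact supplied the detail it omits.
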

\begin{proof}
We have $\theta(p^{\alpha} z;q) = \frac{\Gamma(p^{\alpha+1}z)}{\Gamma(p^{\alpha} z)}$, thus the result follows from the above theorem, and similarly for $\theta(p^{\alpha} z;p)$. We are  actually slightly cheating here as the set of zeros of the gamma functions we have to avoid is larger than the set we avoid for the theta functions. In order to fix that we have to repeat the proof given above for the bounds on the elliptic gamma function for this situation.
\end{proof}
The statements of these bounds leads us to the following definition
\begin{definition}
Let $p=xq^v$ and suppose $f(z)$ is a meromorphic function on $\mathbb{C}^*$. Let $fob(\alpha)$ and $sob(\alpha)$ be functions $\mathbb{R} \to \mathbb{R}$, respectively $\mathbb{R} \to \mathbb{C}^*$, such that for any $M>0$ and any $\epsilon >0$ there exist $C_1, C_2>0$ such that 
\[
C_1 < \left| f(p^{\alpha} z) q^{\frac12 v^2 fob(\alpha)} sob(\alpha)^v \right|<C_2,
\]
for all $\alpha \in [-M,M]$, all $z$ in the annulus $\frac{1}{M} \leq z \leq M$, and all integer $v$ such that $|p|<|q|$ and such that $\left| 1-\frac{p^{\alpha}z}{y} \right|>\epsilon$ for all poles and zeros $y$ of $f(p^{\alpha} z)$.

Then $fob(\alpha)$ and $sob(\alpha)$ are called the first, respectively, second order behavior of $f$.
\end{definition}
Note that a high first order behavior means that, for small values of $p$, the function $f$ is large. On the other hand a small value of $|sob(\alpha)|$ implies that $f$ is large (though the second order behavior is always less influential than the first order behavior).

Let us end this section with some remarks on the form of the first and second order behavior of the elliptic gamma function. While at first sight one would think that $g(\alpha)-g(\{\alpha\})$ being the difference of a cubic and a piecewise cubic function is itself piecewise cubic, it is actually piecewise quadratic.
\begin{lemma}
Let $p$ be a polynomial of degree $n$, then $p(x)-p(\{x\})$ is a piecewise $n-1$st degree polynomial.
\end{lemma}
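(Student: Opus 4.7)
The plan is to reduce the problem to an algebraic identity on each interval of the form $[k,k+1)$, $k\in\mathbb{Z}$, by unwinding the definition of the fractional part. First I would fix such a $k$ and note that on $[k,k+1)$ the floor function is constant, giving $\{x\}=x-k$. Consequently
\[
p(x)-p(\{x\}) = p(x)-p(x-k),
\]
which is manifestly a polynomial in $x$ on this interval. This already establishes the piecewise-polynomial structure, with pieces naturally partitioned by the integers.

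Next I would bound the degree. Expanding $p(x)=\sum_{j=0}^n a_j x^j$, the difference equals
\[
p(x)-p(x-k) = \sum_{j=0}^{n} a_j \bigl( x^j - (x-k)^j \bigr).
\]
In each summand the leading $x^j$ term cancels, so $x^j-(x-k)^j$ has degree at most $j-1$, and therefore the whole expression has degree at most $n-1$. For $k=0$ the difference vanishes identically; for $k\ne 0$ the coefficient of $x^{n-1}$ is $n a_n k$, so when $a_n\ne 0$ the bound is sharp on every non-trivial piece.

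There is essentially no obstacle: the content of the lemma is just the cancellation of the top term in $x^n-(x-k)^n$ after one recognizes that $\{x\}$ becomes an affine function of $x$ on each unit interval. The only minor subtlety worth mentioning is the convention about what ``degree $n-1$'' means on the degenerate piece $[0,1)$, which I would handle by stating the bound as ``degree at most $n-1$'' throughout.
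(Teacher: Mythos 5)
Your proof is correct and follows essentially the same route as the paper: on each interval $[k,k+1)$ one has $\{x\}=x-k$, so $p(x)-p(\{x\})=p(x)-p(x-k)$, and the leading terms cancel. The paper simply reduces to the monomial $p(x)=x^n$ by linearity, while you expand $p$ directly and additionally note the sharpness and the degenerate piece $k=0$, which are harmless refinements of the same idea.
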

\begin{proof}
It suffices to proof this for $p(x)=x^n$. Then we find that on the interval $x\in [k,k+1]$ we have
$p(x)-p(\{x\}) = p(x) - p(x-k) = x^n -(x-k)^n$ is an $n-1$'st degree polynomial.
\end{proof}
This lemma also shows that $\binom{\alpha}{2}-\binom{\{\alpha\}}{2}$ is a piecewise linear function. We could have also derived this last fact from its relation with $g(\alpha)-g(\{\alpha\})$. Indeed $g(\alpha)-g(\{\alpha \})$ is a continuously differentiable function with derivative
\[
\frac{d}{da} g(a) -g(\{a\}) = 2\left(\binom{a}{2} - \binom{\{a\}}{2} \right).
\]

\section{How to take integral limits}\label{sec6}
In this section we describe how to obtain basic hypergeometric integrals as a limit. In particular we determine the conditions on $\alpha_r$ for which such an integral limit is possible. This section is written for arbitrary values of $m$. 

For simplicity we assume in this section only that our parameters are ordered such that $\alpha_1\leq \alpha_2\leq \cdots \leq \alpha_{2m+6}$. This choice allows for an easy seemingly $m$-independent way to represent the cases in which we can take integral limits. In the other sections we use the opposite ordering of the $\alpha_r$'s because that corresponds with the fundamental Weyl chamber with respect to a standard basis of the root systems $E_6$ and $A_5$.

In order to take the limit we need to make sure we obtain a $p$-independent contour. Thus we need to make sure that all poles of the integrand which go to 0 as $p\to 0$ are inside the contour, and all poles which go to infinity as $p\to 0$ are outside the contour. We do not want to pick up residues for an integral limit, so this condition must be valid without moving the contour over some poles. 

If we do manage to obtain a $p$-independent contour then interchanging limit and integral will always work, since the contour is contained  in some compact set, and the integrand converges uniformly on compact sets which do not contain the zeros and poles of the integrand (if we allow for the correct prefactor determined by the first and second order behavior of the integrand). Note that this argument does not ensure that the resulting integral does not vanish, which would imply that the underlying limit is uninteresting to us.

\begin{theorem}\label{thmintlim}
Suppose $\alpha_r, \beta_r \in \mathbb{Q}_{\geq 0}$ and $\gamma_r,\delta_r\in \mathbb{Q}$ and let 
\[
I(z) = \prod_{r} \Gamma(p^{\alpha_r} t_r z) \prod_r \Gamma(p^{\beta_r} u_r/z)
\prod_r \theta(p^{\gamma_r} v_r z;q) \prod_r \theta(p^{\delta_r} w_r z;p).
\]
Write $p=xq^v$ as usual and suppose 
\[
\lim_{v\to \infty} q^{\frac12 fob\cdot v^2} sob^v I(z) = I_0(z),
\]
where $v$ is taken along some arithmetic sequence, then 
\[
\lim_{v\to \infty} q^{\frac12 fob\cdot v^2} sob^v \int_{C} I(z) \frac{dz}{2\pi i z} = 
\int_C I_0(z) \frac{dz}{2\pi i z},
\]
where\;Êthe contour $C$ is a deformation of the positively oriented unit circle which includes the poles of $\Gamma(u_r/z)$ for those $r$ with $\beta_r=0$, and excludes the poles of $\Gamma(t_rz)$ for those $r$ with $\alpha_r=0$. 
\end{theorem}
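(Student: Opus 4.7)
The plan is to run a bounded-convergence argument on the compact contour $C$: by hypothesis the rescaled integrand converges pointwise on $C$ to $I_0(z)$, so it suffices to produce a uniform bound on $q^{\frac12 fob\cdot v^2} sob^v I(z)$ for $z\in C$ and all sufficiently large $v$, after which the limit can be passed under the integral sign.

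First I would verify that a single $p$-independent contour is admissible for all sufficiently small $p$. The only poles of $I(z)$ come from the elliptic gamma factors: those of $\Gamma(p^{\alpha_r} t_r z)$ sit at $z\in (p^{\alpha_r} t_r)^{-1} q^{-\mathbb{Z}_{\geq 0}} p^{-\mathbb{Z}_{\geq 0}}$, and dually for $\Gamma(p^{\beta_r} u_r/z)$. When $\alpha_r>0$ the whole pole set migrates uniformly to infinity as $p\to 0$; when $\alpha_r=0$ the poles at $t_r^{-1}q^{-\mathbb{Z}_{\geq 0}}$ remain fixed while the rest escape to infinity, and symmetrically for the $\beta_r$. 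The hypothesis $\alpha_r,\beta_r\geq 0$ therefore permits one to fix a deformation of the unit circle that, for all sufficiently small $p$, excludes the fixed poles of $\Gamma(t_r z)$ with $\alpha_r=0$ and includes those of $\Gamma(u_r/z)$ with $\beta_r=0$, with uniform separation from every pole of the integrand.

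Second I would apply Theorem 1 and its Corollary factor by factor. By the definition of first and second order behavior, each individual rescaled factor is bounded above by a constant uniform in $v$, provided $z$ lies in a fixed annulus and at distance at least $\epsilon$ from the poles and zeros of that factor. Multiplying these finitely many bounds yields a uniform bound on $|q^{\frac12 fob\cdot v^2} sob^v I(z)|$ along $C$; the $\epsilon$-avoidance of theta-function zeros is arranged by a further small deformation of $C$, since the fixed zeros are finite in number and the moving zeros (those with $\gamma_r\neq 0$ or $\delta_r\neq 0$) escape to $0$ or to $\infty$.

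With pointwise convergence and a uniform bound on a compact contour, bounded convergence delivers the stated identity. The main obstacle is really the first step: one has to confirm that the entire pole structure of $I(z)$ is compatible with a single $p$-independent contour, which is what the sign hypothesis $\alpha_r,\beta_r\geq 0$ encodes — if some $\alpha_r$ or $\beta_r$ were negative, poles would drift across any fixed contour and one would be forced to pick up residues, which is precisely the situation addressed in the subsequent sections. Everything else reduces to the uniform asymptotics established in Section \ref{secasym}.
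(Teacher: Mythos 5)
Your proposal is correct and follows essentially the same route as the paper: both rest on the uniform asymptotic bounds of Section \ref{secasym} holding on the compact, $p$-independent contour (made possible by $\alpha_r,\beta_r\geq 0$), together with a slight deformation of $C$ to stay away from zeros, after which limit and integral are interchanged. The only cosmetic difference is that you invoke pointwise convergence plus a uniform bound (bounded convergence), whereas the paper phrases the interchange via uniform convergence on the contour; these are equivalent here.
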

\begin{proof}
In the previous section we have seen that the convergence of the elliptic gamma functions and theta functions is uniform on compacts away from the poles and zeros. If $C$ does not run through a zero of $I(z)$ convergence on $C$ is uniform and thus the limit holds. If $C$ does meet a zero of $I(z)$ we can slightly shift the contour to avoid the zero, take the limit for this new contour, and finally move the contour back to its original position.
\end{proof}
If we happen to have elliptic gamma functions in the denominator we can use the reflection equation $\Gamma(z)=\Gamma(pq/z)$ to move them to the numerator. If we have elliptic gamma functions of the form $\Gamma(t z^2)$ we can use the doubling formula $\Gamma(tz^2)= \Gamma(\pm \sqrt{t} z, \pm \sqrt{pt} z, \pm \sqrt{qt} z, \pm \sqrt{pqt} z)$ to change them to gamma functions of the desired form.

Now let us consider the elliptic beta integral \eqref{eqellbetagenm} specialized at $t_r=u_rp^{\alpha_r}$. From the poles of the elliptic gamma function we can immediately see that the poles of the integrand of the elliptic beta integral \eqref{eqellbetaeval} are located at $z=t_r q^k p^l$  (inside the contour, for $k,l\in \mathbb{Z}_{\geq 0}$) and at $z=t_r^{-1} q^{-k}p^{-l}$ (outside the contour, again with $k,l\in \mathbb{Z}_{\geq 0}$) . Thus if any of the $\alpha_r<0$, we have poles outside the contour which converge to $0$ (the one at $t_r^{-1}$) and poles inside the contour converging to infinity (the one at $t_r$). Therefore, even allowing for shifts of the contour, it becomes impossible to find a contour which is independent of $p$ and which passes these poles on the correct side. On the other hand if all $\alpha_r\geq 0$ we do not have to shift the contour to find a proper contour, and indeed the previous theorem provides the limit.

By using a trick which breaks the symmetry of the integrand we can take limits in more cases than just those with $\alpha_r\geq 0$. Rains and the author previously used this trick in \cite{vdBR1} and \cite{vdBR4}. Consider the identity
\[
\frac{\theta(w_1z,w_2z,w_3z,\frac{w_1w_2w_3}{z};q)}{\theta(z^{2};q)}
+ (z\leftrightarrow \frac{1}{z}) = \theta(w_1w_2,w_1w_3,w_2w_3;q),
\]
which is just a reformulation of the famous theta addition formula.
We can now use the $z\to \frac{1}{z}$ symmetry of the original integrand. Indeed if $I(z) = I(\frac{1}{z})$ and $f(z)+f(\frac{1}{z}) = c$, and if we choose the integration contour to be invariant under $z\to z^{-1}$ then 
\[
\int I(z) \frac{dz}{2\pi i z}  = 
\frac{1}{c}  \int I(z) f(z) \frac{dz}{2\pi i z} 
+ \frac{1}{c} \int I(z) f(\frac{1}{z}) \frac{dz}{2\pi i z}
= \frac{2}{c}  \int I(z) f(z) \frac{dz}{2\pi i z}. 
\]
Here the final identity follows by making the change of variables $z\to \frac{1}{z}$ in the second integral of the middle expression, thereby making it equal to the first integral of the middle expression. 

In our case we obtain the identity
\begin{align}
\frac{(q;q)(p;p)}{2}  &\int \frac{\prod_{r} \Gamma(t_r z^{\pm 1})}{\Gamma(z^{\pm 2})} \frac{dz}{2\pi i z}  \nonumber 
\\&  = 
(q;q)(p;p) \int \frac{\prod_r \Gamma(t_r z^{\pm 1}) \theta(w_1z,w_2z,w_3z,\frac{w_1w_2w_3}{z};q)}{\Gamma(z^{\pm 2}) \theta(z^2,w_1w_2,w_1w_3,w_2w_3;q)}
\frac{dz}{2\pi i z} \nonumber 
\\ &= (q;q)(p;p) \int \frac{\prod_r \Gamma(t_r z^{\pm 1}) \theta(w_1z,w_2z,w_3z,\frac{w_1w_2w_3}{z};q)}{\theta(w_1w_2,w_1w_3,w_2w_3;q)}
\theta(z^{-2};p)
\frac{dz}{2\pi i z}.  \label{eqsymbreak1}
\end{align}
The integrand of this new representation of the elliptic beta integral still has the same poles as the original integrand, so we have not gained anything yet. However, we now have the freedom to choose the new parameters $w_i$ as we please. In particular, choosing $w_1=t_1$ and using the identity $\Gamma(t_1z) \theta(w_1z;q) = \Gamma(pt_1z)$ allows us to remove the poles at $z=\frac{1}{t_1} q^{-k}$ ($k\in \mathbb{Z}_{\geq 0}$). Setting also $w_2=t_2$ and $w_3=t_3$ we obtain the expression
\begin{align*}
(q;q)(p;p) 
\int \frac{\prod_{r=1}^3  \Gamma(pt_r z, \frac{t_r}{z}) \prod_{r\geq 3} \Gamma(t_rz^{\pm 1}) \theta(\frac{t_1t_2t_3}{z};q)}{\theta(t_1t_2,t_1t_3,t_2t_3;q)}
\theta(z^{-2};p) \frac{dz}{2\pi i z}.
\end{align*}
Moreover replacing $z\to p^{-\zeta}z$, $t_r\to u_rp^{\alpha_r}$ and shifting the contour back to a deformation of the unit circle we obtain
\begin{align}\label{eqsymbroken}
(q;q)(p;p) 
\int &\prod_{r=1}^3  \Gamma(p^{1+\alpha_r-\zeta}u_r z, p^{\alpha_r+\zeta}\frac{u_r}{z}) \prod_{r\geq 3} \Gamma(p^{\alpha_r-\zeta} u_rz, p^{\alpha_r+\zeta} \frac{u_r}{z}) 
\\& \qquad \times \frac{\theta( \frac{u_1u_2u_3}{z} p^{\alpha_1+\alpha_2+\alpha_3+\zeta};q)}{\theta(u_1u_2p^{\alpha_1+\alpha_2},u_1u_3p^{\alpha_1+\alpha_3},u_2u_3p^{\alpha_2+\alpha_3};q)} \theta(p^{2\zeta} z^{2};p) \frac{dz}{2\pi i z} \nonumber 
\end{align}
For this integral we see that we can obtain a limit as long as  $\alpha_r+\zeta\geq 0$ and  $\alpha_r+1-\zeta \geq 0$ for $r=1,2,3$ and $\alpha_r-\zeta \geq 0$ for $r\geq 4$ by Theorem \ref{thmintlim}.

If we now use our assumption that the $\alpha_r$ are ordered, then these conditions imply that we can obtain an integral limit as long as 
\[
\zeta \geq -\alpha_1, \qquad \zeta \leq 1+\alpha_1, \qquad \zeta \leq \alpha_4
\]
While we consider the $\alpha_r$'s as given (we want to find a limit for a specific choice of $\alpha_r$), we are allowed to choose $\zeta$ as we wish. A $\zeta$ satisfying all these conditions exists exactly when the lower bound for $\zeta$ is less than both upper bounds, i.e. if 
\begin{equation}\label{eqintlimpos}
\alpha_1 \geq -\frac12, \qquad \alpha_1 +\alpha_4\geq 0.
\end{equation}
Thus these last equations determine when, given $\alpha$, there exists an integral limit after symmetry breaking.

One might think that, by specializing $w_1$, $w_2$, and $w_3$ differently in 
\eqref{eqsymbreak1}, we can obtain even more options for integral limits. However, we have not managed to find any other useful specializations. We have tried the following two additional options.

Firstly, observe that specializing $w_1=t_1$ and $w_2=\frac{q}{t_1}$ (since
$\theta(\frac{q}{t_1} z ;q) = \theta(\frac{t_1}{z};q)$) in order to cancel poles of both $\Gamma(t_1z)$ and $\Gamma(\frac{t_1}{z})$, turns the denominator-factor $\theta(w_1w_2;q)$  into $\theta(1;q)=0$. Thus this attempt fails in giving us a new integral representation of the elliptic beta integral. 

Secondly, when $\alpha_1$ is very negative we might want to specialize $w_1=t_1$ and $w_2=pt_1$ to cancel even more poles of $\Gamma(t_1z)$. This leads to a new integral representation of the elliptic beta integral, but not to new possibilities for limits. Indeed the conditions we get would include $\alpha_1+\alpha_3\geq 0$, implying $\alpha_1+\alpha_4\geq 0$. Thus, in any new limiting cases we have $\alpha_1<-\frac12$, so from $\alpha_r+\alpha_1\geq 0$ we obtain $\alpha_r\geq -\alpha_1> \frac12$ for $r\geq 5$. But then 
\[
\sum_{r} \alpha_r = (\alpha_1+\alpha_3) +(\alpha_2+\alpha_4) + \sum_{r\geq 5} \alpha_r 
\geq 2(\alpha_1+\alpha_3) + \sum_{r\geq 5} \alpha_r > 0+ 0 + (m+1)=m+1,
\] in violation with the balancing condition.

Now we have answered the question when integral limits exist, the question remains when these limits are meaningful. In particular we must be careful to choose the correct value of $\zeta$ before interchanging limit and integral. If we choose the wrong value of $\zeta$ the asymptotic behavior of the integrand might be larger than that of the integral itself, and thus the rescaling factor needed to make the integrand converge will make the limit of the integral itself vanish.  

To minimize the possibility of the limiting integral vanishing we should choose $\zeta$ such that the asymptotic behavior of the integrand is as small as possible. This is similar to the idea of the saddle point method for approximating integrals. The task of finding the extremal points of the relevant function, the first order behavior of the integrand (for arbitrary choice of $\alpha$), is undertaken in Section \ref{sec8} for the $m=0$ integrand. We do not know how to generalize the results obtained there for general $m$ (though we will do the $m=1$ case in an upcoming paper). 

Preferably though, we would like to determine the asymptotic behavior of the integral explicitly, and then choose $\zeta$ such that the asymptotic behavior of the integrand coincides with it. Observe that this choice of $\zeta$, if it exists, must coincide with the choice which minimizes the size of the integrand, as the size of the integral can not be larger than the size of the integrand (considering the contour is bounded and constant). Since we have an explicit evaluation of the $m=0$ integral, for this case we can easily perform this calculation.

Let us end this section with some examples. We can take a symmetric integral limit in the elliptic beta integral associated to the vector $(0,0,0,0,\frac12,\frac12)$. Indeed we can take a direct limit $p\to 0$ and find 
\begin{align*}
1&= \lim_{p\to 0} \frac{(p;p)(q;q)}{2\prod_{1\leq r<s\leq 4} \Gamma(u_ru_s)
\prod_{r=1}^4 \prod_{s=5}^6 \Gamma(p^{\frac12} u_ru_s) 
\Gamma(pu_5u_6)}
\int_C \frac{\prod_{r=1}^4 \Gamma(u_r z^{\pm 1}) \prod_{r=5}^6 \Gamma(p^{\frac12} u_r z^{\pm 1})}{\Gamma(z^{\pm 2})} \frac{dz}{2\pi i z} \\ 
&= 
\int_C \lim_{p\to 0} \frac{(p;p)(q;q)}{2\prod_{1\leq r<s\leq 4} \Gamma(u_ru_s)
\prod_{r=1}^4 \prod_{s=5}^6 \Gamma(p^{\frac12} u_ru_s) 
\Gamma(pu_5u_6)} 
\frac{\prod_{r=1}^4 \Gamma(u_r z^{\pm 1}) \prod_{r=5}^6 \Gamma(p^{\frac12} u_r z^{\pm 1})}{\Gamma(z^{\pm 2})} \frac{dz}{2\pi i z}
\\ &=
\int_C \frac{(q;q) \prod_{1\leq r<s\leq 4} (u_ru_s;q)}{2 (q/u_5u_6;q)}
\frac{ (z^{\pm 2};q)}{\prod_{r=1}^4 (u_rz^{\pm 1};q)} \frac{dz}{2\pi i z}, 
\end{align*}
which is equivalent to the Askey-Wilson integral evaluation.
Let us now consider the vector $(-\frac12,0,0,\frac12,\frac12,\frac12)$. Notice that this vector is a permutation and a shift along a half-integer vector away from $(0,0,0,0,\frac12,\frac12)$. 
For this new vector we have to break symmetry before taking the limit. We use the case $\zeta=\frac12$ in \eqref{eqsymbroken}. Thus we consider the limit
\begin{align*}
1 &= \lim_{p\to 0} \frac{(q;q)(p;p)}{\prod_{r=2}^3 \Gamma(p^{-\frac12} u_1u_r)
\prod_{r=4}^6 \Gamma(u_1u_r) \Gamma(u_2u_3) \prod_{r=2}^3 \prod_{s=4}^6 \Gamma(p^{\frac12} u_ru_s) \prod_{4\leq r<s\leq 6} \Gamma(pu_ru_s)}
\\& \qquad \times \int_C \Gamma(u_1z^{\pm 1}) \prod_{r=2}^3 \Gamma(p^{\frac12} u_rz^{\pm 1})\prod_{r=4}^6 \Gamma( u_rz,pu_r/z)  \frac{\theta(\frac{u_1u_2u_3}{z};q)}{\theta(u_1u_2p^{-\frac12},u_1u_3p^{-\frac12}, u_2u_3;q)} \theta(pz^2;p) \frac{dz}{2\pi i z}
\\ &= 
\int_C \lim_{p\to 0} \frac{(q;q)(p;p)}{\prod_{r=2}^3 \Gamma(p^{\frac12} u_1u_r)
\prod_{r=4}^6 \Gamma(u_1u_r) \Gamma(p u_2u_3) \prod_{r=2}^3 \prod_{s=4}^6 \Gamma(p^{\frac12} u_ru_s) \prod_{4\leq r<s\leq 6} \Gamma(pu_ru_s)}
\\ & \qquad \times  \Gamma(u_1z^{\pm 1}) \prod_{r=2}^3 \Gamma(p^{\frac12} u_rz^{\pm 1})\prod_{r=4}^6 \Gamma( u_rz,pur_/z)  \theta(\frac{u_1u_2u_3}{z};q)\theta(pz^2;p) \frac{dz}{2\pi i z}
 \\ &= 
 \int_C \frac{(q;q)  \prod_{r=4}^6 (u_1u_r;q) }{(q/u_2u_3;q) \prod_{4\leq r<s\leq 6} (q/u_ru_s;q)} 
 \frac{ 1 }{(u_1z^{\pm 1};q)} \prod_{r=4}^6 \frac{(qz/u_r;q)}{(u_rz;q)}  
 \theta(\frac{u_1u_2u_3}{z};q) (1-z^{-2}) \frac{dz}{2\pi i z}  
\end{align*}
Note that the result here is a symmetry broken version of the Askey-Wilson integral. Indeed 
\begin{align*}
& \frac{(q;q) \prod_{1\leq r<s\leq 4} (u_ru_s;q)}{2 (q/u_5u_6;q)}
\frac{ (z^{\pm 2};q)}{\prod_{r=1}^4 (u_rz^{\pm 1};q)} 
\frac{\theta(u_2z,u_3z,u_4z, \frac{u_2u_3u_4}{z} ;q)}{\theta(u_2u_3,u_2u_4,u_3u_4,z^2;q)}
 \\ & = 
 \frac{(q;q) \prod_{r=2}^4 (u_1u_r;q) }
 {2 (q/u_5u_6;q)  \prod_{2\leq r<s\leq 4} (q/u_ru_s;q)} \frac{1}{(u_1z^{\pm 1};q)} 
 \prod_{r=2}^4 \frac{(q/u_rz;q)}{(u_r/z;q)}  
 \theta(\frac{u_2u_3u_4}{z} ;q)
  (1-z^{-2}) 
\end{align*}
which is exactly the integrand of the limit for $\alpha = (-\frac12,0,0,\frac12,\frac12,\frac12)$, up to the permutation $u_2\leftrightarrow u_5$ and $u_3\leftrightarrow u_6$ and $z\to 1/z$ (note that the balancing condition implies that $u_2u_3u_4=q/u_1u_5u_6$).

In the previous two cases we did not have to restrict $p$ to a $q$-geometric sequence. In the final example we consider we will have to do so. It is associated to the vector $(-\frac12,-\frac12,-\frac12,\frac12,1,1)$, a half-integer shift away from the vector $(0,0,0,0,\frac12,\frac12)$ giving the Askey-Wilson integral. We again consider \eqref{eqsymbroken} to obtain
\begin{align*}
1 & = \frac{(p;p)(q;q)}{\prod_{1\leq r<s\leq 3} \Gamma(p^{-1} u_ru_s) 
\prod_{r=1}^3 \Gamma(u_ru_4) \prod_{r=1}^3 \prod_{s=5}^6 \Gamma(p^{\frac12} u_ru_s)
\prod_{s=5}^6 \Gamma(p^{\frac32} u_4u_s) \Gamma(p^2 u_5u_6)}
\\ & \qquad \times \int_C \prod_{r=1}^3 \Gamma(u_rz^{\pm 1}) \Gamma(u_4z,pu_4/z) 
\prod_{r=5}^6 \Gamma(p^{\frac12} u_rz, p^{\frac32} u_r/z)
\frac{\theta(\frac{u_1u_2u_3}{z} p^{-1};q)}
{\prod_{1\leq r<s\leq 3} \theta(u_ru_sp^{-1};q)}
\theta(pz^2;p) \frac{dz}{2\pi i z}
\\ &=\frac{(p;p)(q;q)}{\prod_{1\leq r<s\leq 4} \Gamma( u_ru_s) 
\prod_{r=1}^3 \prod_{s=5}^6 \Gamma(p^{\frac12} u_ru_s)
\prod_{s=5}^6 \Gamma(p^{\frac32} u_4u_s) \Gamma(p^2 u_5u_6)}
\\ & \qquad \times \int_C \prod_{r=1}^3 \Gamma(u_rz^{\pm 1}) \Gamma(u_4z,pu_4/z) 
\prod_{r=5}^6 \Gamma(p^{\frac12} u_rz, p^{\frac32} u_r/z)
\theta(\frac{u_1u_2u_3}{z} p^{-1};q)
\theta(pz^2;p) \frac{dz}{2\pi i z} 
\end{align*}
Note that now we have elliptic gamma functions of the form $\Gamma(p^{\frac32} y)$ and $\Gamma(p^2 y)$, of which we can't take the limit $p\to 0$. Thus let us write $p=xq^{2v}$ and take the limit $v\to \infty$ for integer $v$. This implies that we take $p\to 0$ along a $q^2$ geometric sequence. Doing so gives the limit
\begin{align*}
1 &= \lim_{v\to \infty} 
\frac{(p;p)(q;q)}{\prod_{1\leq r<s\leq 4} \Gamma( u_ru_s) 
\prod_{r=1}^3 \prod_{s=5}^6 \Gamma(p^{\frac12} u_ru_s)
\prod_{s=5}^6 \theta(x^{\frac12} q^v u_4u_s;q) \Gamma(p^{\frac12} u_4u_s) 
\theta(xq^{2v} u_5u_6;q) \Gamma(p u_5u_6)}
\\ & \qquad \times \int_C \prod_{r=1}^3 \Gamma(u_rz^{\pm 1}) \Gamma(u_4z,pu_4/z) 
\prod_{r=5}^6 \theta(x^{\frac12} q^v u_r/z;q) \Gamma(p^{\frac12} u_rz^{\pm 1})
\theta(\frac{u_1u_2u_3}{xz} q^{-2v};q)
\theta(pz^2;p) \frac{dz}{2\pi i z} 
\\ & = 
\frac{(q;q) \prod_{1\leq r<s\leq 4} (u_ru_s;q)}{(\frac{q}{u_5u_6};q) 
\prod_{s=5}^6 \theta(x^{\frac12} u_4u_s ;q)
\theta(xu_5u_6;q)  }
\int_C \frac{(qz/u_4;q)  }{\prod_{r=1}^3 (u_rz^{\pm 1};q) (u_4z;q)}
\prod_{r=5}^6 \theta(\frac{x^{\frac12} u_r}{z}, \frac{u_1u_2u_3}{xz};q) (1-z^{-2}) \frac{dz}{2\pi i z},
\end{align*}
where we use that 
\[
\frac{\prod_{r=5}^6 \theta(x^{\frac12} q^v u_r/z;q) \theta(\frac{u_1u_2u_3}{xz} q^{-2v};q)}{\prod_{s=5}^6 \theta(x^{\frac12} q^v u_4u_s;q) \theta(xq^{2v} u_5u_6;q) }
= 
\frac{\prod_{r=5}^6 \theta(x^{\frac12} u_r/z;q) \theta(\frac{u_1u_2u_3}{xz} ;q)}{\prod_{s=5}^6 \theta(x^{\frac12} u_4u_s;q) \theta(x u_5u_6;q) }
\]
by the difference equation for theta functions. The resulting integral is again a symmetry broken version of the symmetric Askey-Wilson integral (in the symmetry breaking function we have to specialize $w_1=u_4$, $w_2=x^{\frac12} u_5$, $w_3=x^{\frac12} u_6$ and $z\to 1/z$).

\section{How to take series limits}
This section describes the second method of obtaining limits. Essentially this method consists of picking up residues by moving the contour over some poles. Moreover we ensure that the contour is shifted to a location, where the limit of the integrand (times the proper rescaling factor) vanishes. Then we would like to take the limit by interchanging limit and sum, and observing that the limit of the integral vanishes. Like the previous section, this section is valid for arbitrary $m$. The entire section is summed up in Theorem \ref{thmserlim}. I think that  the expression for the limit of the integral given in that theorem is quite hard to read and comprehend. To explain where this result comes from we start with an intuitive explanation of the ideas behind the theorem.

In the previous section we already noted that the poles of the integrand $I_0(z)$ of the elliptic beta integral are located at $z^{\pm 1} = t_r q^k p^l$ ($k,l\in \mathbb{Z}_{\geq 0}$). A fixed contour (as $p\to 0$) which is a deformation of the unit circle is impossible if some $\alpha_r<0$ (where we again use the convention $t_r=u_rp^{\alpha_r}$). In that case the first elements in the sequence $t_r q^kp^0$ converge to infinity, while this entire sequence should be contained inside the contour. Likewise the sequence $t_r^{-1} q^{-k} p^0$ should be completely outside the contour, whereas the first terms converge to the origin. 

In order to obtain an expression with a fixed contour we shift the contour over some of the poles of the integrand, picking up the associated residues. We should only pick up residues for the poles on the ``wrong'' side of the unit circle. For any fixed $p$ this is a finite number, but this number increases as $p\to 0$ (and $t_r\to \infty$ since $\alpha_r<0$).  

Let $I_m$ denote the integrand of the elliptic beta integral \eqref{eqellbetagenm}. Then, before taking the limit, we rewrite the integral as 
\[
\int_C I_m(z) \frac{dz}{2\pi i z} = 
\int_{\hat C} I_m(z) \frac{dz}{2\pi i z} + 
\sum_{k=0}^{n(p)} Res(\frac{I_m(z)}{z}, z=t_r q^k)
+ \sum_{k=0}^{n(p)} Res(\frac{I_m(z)}{z}, z=t_r^{-1} q^{-k}),
\]
where the upper bound $n(p)$ of the series depends on $p$ (and is such that $|t_rq^{n(p)}| \approx 1$), the contour $\hat C$ is close to the unit circle for all values of $p$ and excludes the poles $t_rq^k$ and includes the poles $t_r^{-1} q^{-k}$ for 
$k\leq n(p)$. The behavior of $n(p) \approx -\alpha_r \log_q(p)$ is roughly linear in the $q$-logarithm of $p$.  In the case multiple $\alpha_r$ are negative or if $\alpha_r<-1$ we have several such series of residues corresponding to those extra poles which we have to pass over before the contour can be taken to be near the unit circle for all values of $p$. 

In order to obtain a limit from this expression we subsequently want to show that the integrand near the unit circle vanishes in the limit (after proper rescaling). Since the new contour $\hat C$ remains close to the unit circle, the integrand of the new integral vanishes on $\hat C$ in the limit. Since the length of the new contour is roughly constant, the integral then converges to 0. We are left with the limit of the sums of residues, which we want to calculate by interchanging limit and sum. 

If we can interchange limit and sum directly in $\lim \sum_{k =0}^{n(p)} Res(z=t_rq^k)$ the limit will become the series $\sum_{k= 0}^{\infty} \lim Res(z=t_rq^k)$, thus a unilateral series. This interchange is allowed if the size of the residues for large $k$ becomes sufficiently small as $p\to 0$. If this is not the case we have to shift the summation index $k$ so that the large terms in the series are around $k=0$ (for the shifted summation index) and hope that we can interchange limit and integral in this new series. Suppose $p=xq^v$ for some integer $v$ (so we can replace $n(p)$ by $\approx -\alpha_r v$). Then we would write (for some properly chosen $\beta$)
\[
\sum_{k= 0}^{-\alpha_r v} Res (z=t_rq^k) = \sum_{k=-\beta v}^{-(\alpha_r+\beta)v} Res(z=t_rq^{k+\beta v}) = 
\sum_{k=-\beta v}^{-(\alpha_r+\beta)v} Res(z=\frac{t_r}{x^{\beta}} p^{\beta} q^{k})  
\]
and interchange sum and limit in this expression to get
\[
\lim \sum_{k= 0}^{-\alpha_r v} Res (z=t_rq^k) = 
\sum_{k\in \mathbb{Z}} \lim Res(z=\frac{t_r}{x^{\beta}} p^{\beta} q^k).
\]
In this derivation we need $\beta v$ to be an integer. Thus we need $\beta$ to be a rational number, say $g/h$, and while taking the limit we should restrict $p$ to the geometric sequence $x(q^{h})^n$, with $n\in \mathbb{N}$. 

Of course it might be that there are multiple possible values of $k$ where the residues are large. In those cases it turns out we can split the sum of residues in several pieces:  
\[
\sum_{k= 0}^{-\alpha_rv} = \sum_{k=0}^{\gamma_1v} + \sum_{k=\gamma_1v}^{\gamma_2v} + \cdots + \sum_{k=\gamma_s v}^{-\alpha_rv}
\]
such that we can take a limit of each piece by interchanging sum and limit, after a proper shift in the summation index $k$. In this case the first series might be either unilateral (if no shift in $k$ is necessary) or bilateral, but all the subsequent series will be bilateral.

\begin{theorem}\label{thmserlim}
Let us fix $t_r$ and $u_r$. Suppose $\alpha_r,\beta_r, \gamma_r, \delta_r \in \mathbb{Q}$ and 
\[
I(z) = \prod_r \Gamma(p^{\alpha_r} t_r z) \prod_r \Gamma(p^{\beta_r} u_r/z) \prod_r \theta(p^{\gamma_r} v_r z;q) \prod_r \theta(p^{\delta_r} w_r z;p).
\]
Let $fob(\zeta)$ and $sob(\zeta)$ denote the first and second order behavior of $I(zp^{\zeta})$ (the dependence of these functions on $z$, $\alpha_r$, $t_r$, $\beta_r$, $u_r$, etc. is typically suppressed). 
%
%
Let $m_-=\min(\alpha_r,0)$ and $m_+=- \min(\beta_r,0)$. Suppose in the interval $[m_-,m_+]$ the function $fob(\zeta)$ is maximized in the set $M$ by $fob_{max}$. Then $sob(\zeta;\alpha_r,\beta_r;t_r,u_r,z)$ is independent of $z$ for $\zeta\in M$. 

Suppose moreover that within the set $M$ the function $|sob(\zeta)|$ is minimized in a finite set $M_0$ by $sob_{min}$. Then $M_0\subset \mathbb{Q}$. 

Let $p=xq^v$ and restrict $v$ so that $\alpha_r v, \beta_r v \in 2\mathbb{Z}$ for all $r$ and $\mu v\in 2\mathbb{Z}$ for $\mu \in M_0$ (thus $v$ must be chosen along some arithmetic sequence). Then we have
\[
\lim_{v\to \infty} q^{\frac12 fob_{max} v^2} sob_{min}^v \int_C I(z) \frac{dz}{2\pi i z} = -
\sum_{\mu\in M_0, \mu \geq 0} 
\sum_{l \in \mathbb{Z}_{\geq 0} } \sum_{r:\alpha_r+\mu +l\leq 0} seq_{+,\mu,r,l}
+ \sum_{\mu\in M_0, \mu <0} \sum_{l \in \mathbb{Z}_{\geq 0} } \sum_{r:\beta_r-\mu +l <0} seq_{-,\mu,r,l}
\]
where 
\[
seq_{+,\mu,r,l} = \begin{cases}
\sum_{n=0}^{\infty} \lim_{v\to \infty} q^{\frac12 fob_{max} v^2} sob_{min}^v Res(\frac{1}{z}I(z), z=q^{-n} \frac{1}{t_r} p^{\mu} )
 & \mu = -\alpha_r-l \\
 \sum_{n\in \mathbb{Z}} \lim_{v\to \infty} q^{\frac12 fob_{max} v^2} sob_{min}^v Res(\frac{1}{z} I(z), z=q^{-n} \frac{1}{t_r} p^{\mu} x^{-\alpha_r-l-\mu} )
 & \mu< -\alpha_r  -l
\end{cases}
\]
and
\[
seq_{-,\mu,r,l} = \begin{cases}
\sum_{n=0}^{\infty} \lim_{v\to \infty} q^{\frac12 fob_{max} v^2} sob_{min}^v Res(\frac{1}{z}I(z), z=q^{n} u_r p^{\mu} )
 & \mu = l+\beta_r \\
 \sum_{n\in \mathbb{Z}} \lim_{v\to \infty} q^{\frac12 fob_{max} v^2} sob_{min}^v Res(\frac{1}{z} I(z), z=q^{n} u_r p^{\mu} x^{l+\beta_r-\mu} )
 & \mu> l+\beta_r  
\end{cases}
\]

\end{theorem}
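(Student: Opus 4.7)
The plan is to prove the theorem by deforming the original contour $C$ through the families of poles of $I(z)$ and extracting the contributions that survive the rescaling by $q^{\frac{1}{2}fob_{max}v^{2}}sob_{min}^{v}$. The guiding picture is that the substitution $z\mapsto p^{\zeta}z$ transports the original contour to $|z|\approx|p|^{\zeta}$; as $\zeta$ traverses $[m_{-},m_{+}]$, the deformed integral differs from the original only by the residues at those poles of $I$ that are crossed, which occur at values $\zeta=-\alpha_{r}-l$ or $\zeta=\beta_{r}+l$ with $l\in\mathbb{Z}_{\geq 0}$.

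I would first establish the two structural claims. Each factor $\Gamma(p^{\alpha}z)$ contributes $g(\alpha)-g(\{\alpha\})$ to $fob$ and a $z$-exponent equal to $\binom{\alpha}{2}-\binom{\{\alpha\}}{2}$ to $sob$ by the explicit formulas of Section \ref{secasym}, and these two quantities are linked by the identity $\frac{d}{d\alpha}(g(\alpha)-g(\{\alpha\}))=2(\binom{\alpha}{2}-\binom{\{\alpha\}}{2})$ at the end of that section. Summing over the factors of $I$ and substituting $\alpha\mapsto\alpha_{r}-\zeta$ or $\beta_{r}+\zeta$, the total $z$-exponent in $sob(\zeta)$ is a constant multiple of $fob'(\zeta)$. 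Since $fob$ is piecewise quadratic in $\zeta$, either $M$ is a full affine piece on which $fob'\equiv 0$, or $M$ is a finite set of isolated critical points; in either case the $z$-exponent of $sob$ vanishes identically on $M$, giving the first claim. Restricted to $M$, $\log|sob(\zeta)|$ becomes a piecewise linear function of $\zeta$ with rational slopes and rational breakpoints, so its argmin $M_{0}$ is a finite subset of $\mathbb{Q}$.

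The central step is then the contour deformation. Enumerate $M_{0}=\{\mu_{1}<\dots<\mu_{N}\}$ and use Cauchy's theorem to push the contour successively through the intermediate radii $|z|\approx|p|^{\mu_{j}}$; each such shift collects residues at poles of the form $z=t_{r}^{-1}p^{-\alpha_{r}-l}q^{-n}$ or $z=u_{r}p^{\beta_{r}+l}q^{n}$, which are precisely those organized by $seq_{+,\mu,r,l}$ and $seq_{-,\mu,r,l}$, with overall signs coming from the orientation of outward versus inward contour shifts. On any intermediate contour at $|z|\approx|p|^{\zeta}$ with $\zeta\notin M_{0}$, one has either $fob(\zeta)<fob_{max}$ or $|sob(\zeta)|>sob_{min}$, so after multiplication by the rescaling factor the integrand vanishes uniformly in $z$ on that contour and its contribution drops out in the limit. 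A residue sequence at an endpoint $\mu=-\alpha_{r}-l$ or $\mu=\beta_{r}+l$ of the accessible range of $\zeta$ requires no shift of the summation index $n$ and therefore produces a unilateral series; at an interior $\mu\in M_{0}$ the summation index must be shifted by the integer $v(\alpha_{r}+l+\mu)\in 2\mathbb{Z}$ to re-center the dominant terms, producing a bilateral series. This is exactly the case distinction in the definitions of $seq_{\pm,\mu,r,l}$.

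The main obstacle I anticipate is justifying the interchange of limit and sum for the bilateral series. The $z$-independence of $sob$ on $M$ established in the structural step is essential here, because it guarantees that $q^{\frac{1}{2}fob_{max}v^{2}}sob_{min}^{v}$ is the correct rescaling for every term of the shifted residue sum uniformly in $n$, so the tails of the bilateral series decay uniformly in $v$ along the restricted arithmetic sequence and dominated convergence applies. The arithmetic restriction on $v$ stated in the theorem is precisely what is needed both to make each shifted summation index an integer and to ensure the intermediate contours can be drawn disjoint from the pole lattice for every $v$ in the sequence, resolving the remaining technicality.
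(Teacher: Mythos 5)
Your overall strategy (deform the contour to a $p$-independent position, collect residues, show the leftover integral dies, re-center the surviving residue strings and interchange limit and sum) is the same as the paper's, and your structural claims about $z$-independence of $sob$ on $M$ and rationality of $M_0$ are argued essentially as in the paper (note only that finiteness of $M_0$ is a hypothesis, not a consequence: if $\log|sob|$ is constant on a linear piece of $M$ its argmin is an interval, possibly containing irrational points; the paper excludes exactly this case by assuming $M_0$ finite). The central step, however, has a genuine gap. Moving the contour from its original position to (a neighborhood of) the unit circle crosses, for each $r,l$ with $\alpha_r+l<0$ (resp.\ $\beta_r+l<0$), a full string of order $v$ poles, at effective exponents $-\alpha_r-l-k/v$ filling an entire interval, not only those near the radii $|p|^{\mu}$ with $\mu\in M_0$. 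Your assertion that the collected residues are ``precisely those organized by $seq_{\pm,\mu,r,l}$'' silently discards the off-window residues; proving that these vanish after rescaling (the number of such terms grows only linearly in $v$, while each term is exponentially suppressed because there either $fob<fob_{max}$ or $|sob|>sob_{min}$) is a key estimate in the paper's proof and is absent from your proposal. Your argument about contours at radii $|p|^{\zeta}$, $\zeta\notin M_0$, addresses only the final contour integral, not these crossed residues.

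Second, your justification of the interchange of limit and sum is mis-attributed: $z$-independence of $sob$ on $M$ does not by itself give uniform-in-$n$ decay of the tails. What is actually needed, and what the paper proves, is a term-by-term dominating bound of the form $Cq^{-\frac12 d k^2}|f|^{k}$, obtained from the local expansion $fob(\zeta)=fob(\mu)+d(\zeta-\mu)^2$ with $d\le 0$ (maximality of $fob$ at $\mu$) together with, in the flat case $d=0$, $|f|<1$ for $k>0$ and $|f|>1$ for $k<0$ coming from $|sob|$ being minimized at $\mu$ within $M$; without this, neither the interchange nor even the convergence of the limiting bilateral series is justified. Finally, you do not treat the case $0\in M_0$, nor poles landing exactly on the final contour: the paper removes both by the preliminary substitution $z\to zp^{\epsilon}$, after which one may assume $0\notin M_0$ and $\alpha_r,\beta_r\notin\mathbb{Z}$, and this is needed before one can conclude that the integral over the $p$-independent contour tends to $0$.
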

\begin{proof}
We have explicit expressions for the first and second order behavior, namely
\begin{align*}
fob(\zeta) &= \sum_r [g(\alpha_r+\zeta) - g(\{\alpha_r+\zeta\}) ]
+\sum_r [g(\beta_r-\zeta) -g(\{\beta_r-\zeta\})] + \sum_r (\gamma_r+\zeta)^2 , \\
sob(\zeta) &=  x^{fob(\zeta)} z^{\frac12\frac{d}{d\zeta} fob(\zeta)}
\prod_{r} \left(\frac{t_r}{\sqrt{q}}\right)^{\binom{\alpha_r+\zeta}{2}-\binom{\{\alpha_r+\zeta\}}{2}} 
\prod_{r} \left(\frac{u_r}{\sqrt{q}}\right)^{\binom{\beta_r-\zeta}{2}-\binom{\{\beta_r-\zeta\}}{2}} 
\prod_r \left(\frac{v_r}{\sqrt{q}} \right)^{\gamma_r+\zeta} 
\prod_r q^{\binom{\delta_r+\zeta}{2} - \binom{\{\delta_r+\zeta\}}{2}}
\end{align*}
Notice that $fob$ is a continuously differentiable, piecewise quadratic function of $\zeta$, and that $sob(\zeta)$ is the product of monomials in $x$, $q$, $z$, $t_r$,  $u_r$ and $v_r$, the exponents of which are piecewise linear functions of $\zeta$. Let $B =-\alpha_r+\mathbb{Z}$ and $\beta_r+\mathbb{Z}$, then $fob$ and $sob$ are quadratic, respectively linear, on any interval $[a,b]$ such that $(a,b) \cap B=\emptyset$. 

From this expression it is clear that on any point on which $fob$ is maximized, we have $\frac{d}{d\zeta} fob(\zeta)=0$, and hence $sob(\zeta)$ is independent of $z$. 

A direct calculation shows that the second order derivative of $fob(\zeta)$ (which exists on each quadratic piece) is always integer. For the derivative we have the formula
\[
2 \sum_r \left[\binom{\alpha_r+\zeta}{2} - \binom{\{\alpha_r+\zeta\}}{2} \right]
- 2 \sum_r \left[ \binom{\beta_r-\zeta}{2} - \binom{\{\beta_r-\zeta\}}{2} \right].
\]
It is obvious that this derivative is rational for $\zeta\in \mathbb{Q}$ and thus in particular is rational in $B$. It follows that on each linear piece $\frac{d}{d\zeta} fob(\zeta)=a\zeta+b$ with $a\in \mathbb{Z}$ and $b\in \mathbb{Q}$. 
Therefore the zeros of $\frac{d}{d\zeta} fob(\zeta)$, and hence also the extremal values of $fob(\zeta)$, are attained in rational points. The only situation in which non-rational points appear as extremal values is when $\frac{d}{d\zeta} fob$ is identically 0 on a linear piece, in which case $fob$ is constant on this entire piece. 

Between two point in $B$ the functions $sob(\zeta)$ is of the form $sob(\zeta) = a b^\zeta$, for some $a$ and $b$ which are monomials in $t_r$, $u_r$, $v_r$, $q$,  $z$ and $x$, with exponents depending on $\alpha_r$, $\beta_r$, $\gamma_r$ and $\delta_r$. In particular, unless $b=1$, $sob$ is minimized on such a linear piece at one of the end points, i.e. in a point in $B$. The previous paragraph shows that the extremal points of $fob$ appear on isolated rational points and certain intervals between elements of $B$, it follows that in $M$ the function  $|sob(\zeta)|$ is minimized either in some points of $B$, or on these isolated rational extremal points of $fob$. Regardless $M_0$ is a set of rational points.  The case $b=1$ which would also allow for non-rational minimal points is excluded in the theorem as we insist on a finite set $M_0$.

Suppose $0\in M_0$, then we can choose $\epsilon \in \mathbb{Q}$, $\epsilon<0$ such that $[\epsilon,0) \cap B\cup M_0 =\emptyset$. Then replacing $z\to zp^{\epsilon}$ before taking the limit (and shifting the contour back to a deformation of the unit circle while not moving over any poles) changes this situation to one with $0\not \in M_0$. Indeed such a transformation shifts all elements of $M_0$ by $-\epsilon$ (and also all $\alpha_r \to \alpha_r+\epsilon$, $\beta_r\to \beta_r-\epsilon$, etc.). Therefore in the remainder we can assume $0\not \in M_0$ and that $\alpha_r,\beta_r \not \in \mathbb{Z}$. 

Now we look at the integral and move the contour over the residues until the new  integration contour $C'$ is contained in the annulus $A= \{z\in \mathbb{C}~|~ |q|<|z|<|q^{-1}|\}$. Thus we move the contour over all the poles of the form $z=a_{lkr} = p^{-l} q^{-k} p^{-\alpha_r} /t_r$ for all $k$, $l$  for which $|a_{lkr}| <1$, and all poles of the form $z=b_{lkr} = p^{l} q^{k} p^{\beta_r} u_r$ for which $|b_{lkr}|>1$. This leads to series of poles of the form 
\[
S_{lr} = \sum_{k=0}^{k_{max}} Res(\frac{I(z)}{z},z = p^{-l} q^{-k} p^{-\alpha_r} /t_r), \qquad 
T_{lr} = \sum_{k=0}^{k_{max}} Res(\frac{I(z)}{z},z = p^{l} q^{k} p^{\beta_r} u_r).
\]
Here $k_{max}$ is chosen such that $|p^{-l -\alpha_r} q^{-k_{max}}/t_r| \in A$. In particular $k_{max}$ increases linearly with $v$. 
Note that if $l+\alpha_r <0$ the number of terms in $S_{lr}$ increases as $p\to 0$, while if $l+\alpha_r>0$ the number of terms decreases (and the series vanishes for small enough $p$). Thus we only have to consider the series with $l+\alpha_r<0$ and with $l+\beta_r<0$ (by assumption $\alpha_r\not \in \mathbb{Z}$, so $l+\alpha_r\neq 0$). 


We can immediately see that the limit of the remaining integral vanishes. Indeed, as $v$ increases in discrete steps to ensure integrality of $\alpha_r v$ etc., the location of the poles in the annulus remains fixed, and thus the contour can be chosen constant. As $0\not \in M_0$ the integrand converges uniformly to 0 (either the first order behavior ensures convergence to 0, or it is constant, in which case the second order behavior ensures such convergence). Thus we are allowed to interchange limit and integral and  see that the limit of the integral vanishes.

Let us now choose $\epsilon>0$, such that $(\mu,\mu+\epsilon) \cap B=\emptyset$ and $(\mu-\epsilon,\mu) \cap B = \emptyset$ for all $\mu \in M_0$. Now a series
$S_{lr}$ can be rewritten as 
\[
S_{lr} = \sum_{k=0}^{k_{max}} Res(\frac{I(z)}{z}, z = p^{-l-\alpha_r} q^{-k} /t_r)
= \sum_{k=0}^{k_{max}} Res(\frac{I(z)}{z}, z = p^{-l -\alpha_r - \frac{k}{v}} x^{\frac{k}{v}} /t_r)
\]
and can be split in parts with $-l-\alpha_r-\frac{k}{v} \in (\mu-\epsilon,\mu + \epsilon)$
 for some $\mu \in M_0$ and the remaining parts. Notice (by a direct calculation) that the first and second order behavior of $Res(\frac{I(z)}{2\pi i z}, z= p^{\zeta} z_0)$ for poles $p^{\zeta} z_0$ equal the first and second order behaviors of $I(p^{\zeta}z_0)$. As a consequence the remaining parts vanish in the limit. Indeed the number of terms grows only linearly in $v$, while the size of the terms is uniformly bounded by an exponentially decreasing function in $v$. 

For the part around $\mu$ we obtain
\[
\sum_{-v\epsilon}^{v\epsilon} Res(\frac{I(z)}{2\pi i z}, z=p^{-l-\alpha_r} 
q^{ (l+\alpha_r+\mu)v - k} /t_r) = 
\sum_{-v\epsilon}^{v\epsilon} Res(\frac{I(z)}{2\pi i z}, z=p^{\mu} 
q^{- k} x^{-l-\alpha_r-\mu} /t_r) 
\]
The summand of the new series times the scaling factor
$q^{\frac12 fob_{max} v^2} sob_{min}^v$ is bounded (in absolute value) by 
\[
C q^{\frac12 [fob(\mu)- fob(\mu - \frac{k}{v})] v^2} 
\left|\frac{sob(\mu)}{sob(\mu-\frac{k}{v})}\right|^v.
\]
for some constant $C$ (notice that none of the residues are close to a pole of the residue). Now $fob$ is quadratic on $(\mu-\epsilon,\mu)$ and on $(\mu,\mu+\epsilon)$, and it has a maximum at $\mu$, so on both pieces we have $fob(\zeta)= fob(\mu) + d (\zeta-\mu)^2$ for some constant $d\leq 0$. Moreover we have that $sob(\zeta)$ is a product of monomials with linear exponents on these two intervals, so that $\frac{sob(\mu)}{sob(\mu-\frac{k}{v})} = f^{\frac{k}{v}}$ for some $f$. Plugging this in gives an upper bound
\[
C q^{- \frac12 d k^2}  |f|^k,
\]
with possibly different $d$ and $f$ for $k>0$, respectively $k<0$. This bound converges to zero exponentially quadratically if $d<0$, and if $d=0$, the extra condition that $|sob|$ is minimized in $\mu$, gives $|f|<1$ if $k>0$, and $|f|>1$ if $k<0$, which implies that the bound converges to zero exponentially. In either case we have a convergent bound on the series, and thus we are allowed to interchange limit and summation and obtain the desired result.

Likewise we obtain the limits of the $T_{lr}$ series.
\end{proof}

Let us end this section with some examples. We first consider the vector $\alpha= (-\frac12,0,0,\frac12,\frac12,\frac12)$ for which we obtained an integral limit in the previous section. The first order behavior for this vector is constant 0. The absolute value of the second order behavior either has a minimum in $\frac12 + \mathbb{Z}$ if $|u_2u_3|<1$ or in $\mathbb{Z}$ if $|u_2u_3|>1$. It turns out that the rescaling factor we need to use in the case $|u_2u_3|>1$ is such that the resulting series we obtain as limit vanish. Thus we assume $|u_2u_3|<1$ and hence $M_0=\{\pm \frac12\}$. The limit of the integral is therefore the sum of two univariate series, one for the residues at $z=q^{-n} \frac{1}{u_1} p^{-\frac12}$ and one at the reciprocals. Due to symmetries of the residues, discussed in the next section, these two series are identical. Therefore we end up with an evaluation for a single unilateral series, which is the evaluation for a very-well poised ${}_6W_5$, equation (II.20) in Gasper and Rahman \cite{GR}. 

For the vector $\alpha=(-1,0,\frac12,\frac12,\frac12,\frac12)$, under the condition $|u_1u_2|<1$, we have  $M_0=\{\pm \frac12\}$ as well. In this case the resulting limit will be the sum of two bilateral series, which are again equal due to symmetries of the residues. Indeed, writing 
$p=xq^v$ the limit of the integral equals the limit of the series 
\[
\sum_{n=0}^v Res(I(z)/z,z=q^{-n} \frac{1}{u_1} p) = 
\sum_{n= 0}^v Res(I(z)/z,z=q^{\frac{v}{2} -n} \frac{1}{u_1} x^{\frac12} p^{\frac12}) = 
\sum_{n= -\frac{v}{2} }^{\frac{v}{2}} Res(I(z)/z,z=q^{-n} \frac{1}{u_1} p^{\frac12} x^{\frac12}),
\]
where we can interchange limit and series in the last expression. This leads to the summation formula for a very-well poised ${}_6\psi_6$, equation (II.33) in Gasper and Rahman \cite{GR}. 

If we consider the vector $\alpha = (-\frac32,0,\frac12,\frac12,\frac12,1)$ under the condition $|u_2u_6|<1$ we find $M_0 =\{\pm \frac32, \pm \frac12\}$. In principle we thus get the sum of 6 series as the limit, though the fact that residues at reciprocal points are equal reduces the number to 3. Writing $p=x^2q^v$, the three series we have to take the limit of are
\begin{align*}
&\sum_{n= 0}^{\frac{v}{2}} Res(I(z)/z,z=q^{-n} \frac{1}{u_1} p^{\frac32}), \\
&\sum_{n=\frac{v}{2}}^{\frac{3v}{2}} Res(I(z)/z,z=q^{-n} \frac{1}{u_1} p^{\frac32})
 = 
 \sum_{n=-\frac{v}{2}}^{\frac{v}{2}} Res(I(z)/z,z=q^{-n} \frac{1}{u_1} p^{\frac12} x^2 )
  , \\
&\sum_{n= 0}^{\frac{v}{2}} Res(I(z)/z,z=q^{-n} \frac{1}{u_1} p^{\frac12}).
\end{align*}
The first and third series lead to a unilateral sequence, while the middle series gives a bilateral sequence. Even more symmetries of the residues show that, apart from a constant, the unilateral sequences of the first and third row are identical, and that both are equal to a very-well poised ${}_6W_5$, for which we have an evaluation formula. The limiting bilateral series from the middle becomes the ${}_6\psi_6$ which we also know how to evaluate. Thus the evaluation formula we obtain in this case does not tell us anything new:
\begin{align*}
1& =\frac{(\frac{qu_1}{u_3},\frac{qu_1}{u_4}, \frac{qu_1}{u_5};q) }{(qu_1^2, \frac{q}{u_3u_4},\frac{q}{u_3u_5}, \frac{q}{u_4u_5}, \frac{q}{u_2u_6};q)}
\frac{\theta(\frac{u_2x}{u_1}, \frac{u_6x}{u_1}, \frac{u_3x^2}{u_1}, \frac{u_4x^2}{u_1}, \frac{u_5x^2}{u_1}, \frac{u_6x^3}{u_1};q)}
{\theta(u_3u_6x,u_4u_6x,u_5u_6x, \frac{x^2}{u_1^2}, \frac{x^4}{u_1^2};q)}
  {}_6W_5(u_1^2  ;u_1u_3,u_1u_4,u_1u_5  ; u_2u_6)
  \\& \qquad  + 
  \frac{(u_1u_3,u_1u_4,u_1u_5,
  \frac{qx^2}{u_1u_3}, \frac{qx^2}{u_1u_4}, \frac{qx^2}{u_1u_6}, \frac{qu_1}{u_3x^2}, \frac{qu_1}{u_4x^2}, \frac{qu_1}{u_5x^2}  ;q)}{(q,\frac{q}{u_3u_4}, \frac{q}{u_3u_5}, \frac{q}{u_4u_5},\frac{q}{u_2u_6},\frac{u_1^2}{x^2},x^2, \frac{qu_1^2}{x^4}, \frac{qx^4}{u_1^2}   ;q)}
 \\& \qquad \qquad \times  \frac{\theta( \frac{u_1u_2}{x}, \frac{u_1u_6}{x}, \frac{u_6x^3}{u_1};q)}{\theta(u_3u_6x,u_4u_6x,u_5u_6x;q)}
  \rpsisx{6}{6}{ \frac{u_1^2}{x^2}, \pm \frac{qu_1}{x^2}, \frac{u_1u_3}{x^2}, \frac{u_1u_4}{x^2}, \frac{u_1u_5}{x^2} \\ \frac{q}{x^2}, \pm \frac{u_1}{x^2}, \frac{qu_1}{u_3x^2}, \frac{qu_1}{u_4x^2}, \frac{qu_1}{u_5x^2}   }{u_2u_6}
 \\&    \qquad   +   \frac{(\frac{qu_1}{u_3},\frac{qu_1}{u_4}, \frac{qu_1}{u_5};q) }{(qu_1^2, \frac{q}{u_3u_4},\frac{q}{u_3u_5}, \frac{q}{u_4u_5}, \frac{q}{u_2u_6};q)}
    \frac{\theta(\frac{u_1u_2}{x^3}, \frac{u_1u_3}{x^2},\frac{u_1u_4}{x^2}, \frac{u_1u_5}{x^2}, \frac{u_1u_6}{x}, \frac{u_6x}{u_1};q)}{\theta( \frac{u_1^2}{x^4}, \frac{1}{x^2}, u_3u_6x,u_4u_6x,u_5u_6x;q)}
{}_6W_5(u_1^2   ;u_1u_3,u_1u_4,u_1u_5  ; u_2u_6).
\end{align*}
Also observe that if we specialize $x\to 1$, then the summand of the bilateral series ${}_6\psi_6$ vanishes for negative $n$, and the series becomes the same very-poised ${}_6W_5$ that already occurs twice in this limit.

\section{Symmetries of the residues}\label{secsymres}
In this section we discuss how symmetries of the integrand of the elliptic beta integral imply symmetries of the residues. As a result of those symmetries  the different series of residues we have to pick up are equal or similar. This significantly reduces the number of different series that can appear as a limit. For example, we will show that given a vector $\alpha$ (such that $t_r=u_rp^{\alpha_r}$) there is essentially a unique bilateral series that we can obtain. This section is valid for all $m$.

Here we used the notation $I_m(z)$ for the integrand of the elliptic beta integral. To be precise we define  
\[
I_m(z) = I_m(t_r;z) := \frac{\prod_{r=1}^{2m+6} \Gamma(t_r z^{\pm 1})}{\Gamma(z^{\pm 2})}, \qquad 
sb(s_r;z) = \frac{\theta(s_1z,s_2z,s_3z, \frac{s_1s_2s_3}{z};q)}{\theta(z^2,s_1s_2,s_1s_3,s_2s_3;q)}
\]
for parameters $t_r$ satisfying $\prod_{r=1}^{2m+6} t_r= (pq)^{m+1}$.
Direct calculations using the difference equations of the elliptic gamma functions and theta functions lead to the following proposition:
\begin{proposition}
The integrand $I(t_r;z)$ satisfies the following symmetries:
\begin{align*}
I_m(t_r;z) &= I_m(t_r;z^{-1}) = I_m(\sigma(t_r);z), \qquad \forall \sigma \in S_{2m+6}\\
\frac{I_m(t_r;qz)}{I_m(t_r;z)} &= \frac{I_m(p^{\alpha_r}t_r;qz)}{I_m(p^{\alpha_r} t_r;z)}, \qquad \alpha \in \mathbb{Z}^m \\
\frac{I_m(t_r;qz)}{I_m(t_r;z)} &= \frac{I_m(p^{\alpha_r}t_r;qp^{\frac12}z)}{I_m(p^{\alpha_r} t_r;p^{\frac12} z)}, \qquad \alpha \in \mathbb{Z}^m+ \{\frac12\}^{2m+6}.
\end{align*}
Here $S_{2m+6}$ denotes the group of permutations of $2m+6$ elements, and $\alpha$ is always chosen so that $\sum_r \alpha_r=0$. The symmetry-breaking term satisfies 
\[
sb(s_1,s_2,s_3;qz)= sb(s_1,s_2,s_3;z), \qquad 
sb(s_1,s_2,s_3;z) + sb(s_1,s_2,s_3;\frac{1}{z}) = 1.
\]
\end{proposition}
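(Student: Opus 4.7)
The plan is to dispatch the four groups of claims in turn. The first two identities---invariance under $z \leftrightarrow z^{-1}$ and under permutations of the $t_r$'s---are immediate from the definition of $I_m$, since both $\Gamma(t_r z)\Gamma(t_r/z)$ and $\Gamma(z^{\pm 2})$ are manifestly invariant under $z\leftrightarrow z^{-1}$, while the integrand is built symmetrically from the $t_r$'s. All the genuine content lies in the two shift identities for the $t_r$ and the two assertions about the symmetry-breaking term.

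For the integer-shift identity, my first step is to iterate $\Gamma(pz)=\theta(z;q)\Gamma(z)$ (and its inverse for negative exponents) to write, for each $r$ and each $w\in\{z,z^{-1}\}$,
\[
\frac{\Gamma(p^{\alpha_r}t_r w)}{\Gamma(t_r w)} = \prod_k \theta(p^k t_r w;q)^{\pm 1},
\]
with the range of $k$ and the sign determined by the sign of $\alpha_r$. Thus the quotient $I_m(p^{\alpha_r}t_r;z)/I_m(t_r;z)$ is a product of such theta functions in $t_r z^{\pm 1}$. Applying $\theta(qy;q)=-y^{-1}\theta(y;q)$ on the $z$-side and the equivalent $\theta(q^{-1}y;q)=-(y/q)\theta(y;q)$ on the $z^{-1}$-side, each paired factor $\theta(p^k t_r z;q)\theta(p^k t_r/z;q)$ transforms under $z\to qz$ by the $k$- and $t_r$-independent scalar $(qz^2)^{-1}$. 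The $r$-th contribution to the $q$-shift ratio of the quotient is therefore $(qz^2)^{-\alpha_r}$, and the overall multiplicative discrepancy between the two quasi-constants is $(qz^2)^{-\sum_r\alpha_r}=1$, as required.

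For the half-integer shift, the same idea applies after the simultaneous substitution $z\mapsto p^{1/2}z$: the arguments $p^{\alpha_r}t_r(p^{1/2}z)^{\pm 1} = p^{\alpha_r\pm 1/2}t_r z^{\pm 1}$ then have integer $p$-exponents, so the previous bookkeeping applies. The new features are twofold: the number of theta factors on the $z$-side and on the $z^{-1}$-side now differ by exactly one per $r$, and an honest non-trivial contribution arises from $\Gamma(pz^2)\Gamma(p^{-1}z^{-2})/\Gamma(z^{\pm 2}) = \theta(p^{-1}z^{-2};q)/\theta(z^2;q)$. My plan is to pair theta factors across the two sides as far as possible (each pair again contributing $(qz^2)^{-1}$ under $z\to qz$), and to handle the single unpaired factor per $r$ by direct computation. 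The unpaired factors collectively produce a scalar built from $\prod_r t_r^{-1}$, powers of $p$ and $q$, and $z^{-2\sum_r\alpha_r}$; combining $\sum_r\alpha_r=0$ with the balancing condition $\prod_r t_r=(pq)^{m+1}$ yields a scalar that is precisely cancelled by the $q$-shift ratio of the new denominator contribution. This final cancellation---between the unpaired $t_r$-dependent factors and the $(p,q)$-scaling coming from the $\Gamma(z^{\pm 2})$ piece---is the step most likely to require careful bookkeeping.

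Finally, for the symmetry-breaking term: $sb(s_1,s_2,s_3;qz)=sb(s_1,s_2,s_3;z)$ follows by applying $\theta(qy;q)=-y^{-1}\theta(y;q)$ to each of the four numerator factors and to $\theta(z^2;q)$ in the denominator, and checking that both numerator and denominator transform by the same scalar $1/(qz^4)$. The reflection identity $sb(z)+sb(1/z)=1$ is precisely the reformulation of the theta addition formula already recorded by the author immediately above \eqref{eqsymbreak1}, so nothing new is required.
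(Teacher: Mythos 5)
Your proposal is correct and takes essentially the same route as the paper, which offers no detailed argument beyond asserting that the proposition follows from ``direct calculations using the difference equations of the elliptic gamma functions and theta functions''---precisely the calculation you carry out, and your bookkeeping checks: each matched pair contributes $(qz^2)^{-1}$ under $z\to qz$, the unpaired factors in the half-integer case give $(pq)^{m+3}\prod_r t_r^{-1}=(pq)^2$ by the balancing condition, and this is cancelled by the $(pq)^{-2}$ coming from $\theta(p^{-1}z^{-2};q)/\theta(z^2;q)$. The only nitpick is the display $\Gamma(pz^2)\Gamma(p^{-1}z^{-2})/\Gamma(z^{\pm 2})=\theta(p^{-1}z^{-2};q)/\theta(z^2;q)$, whose two sides are actually reciprocal to each other; the right-hand side is nevertheless the correct contribution to the quotient $I_m(p^{\alpha_r}t_r;p^{1/2}z)/I_m(t_r;z)$ that you analyze, so nothing in the argument breaks.
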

Note that the last two equations for the integrand of the elliptic beta integral imply that it satisfies 
\[
I_m(pz)I_m(qz)=I_m(pqz)I_m(z),
\]
the defining equation for an elliptic hypergeometric integral.

As a corollary to this proposition we obtain several equations between residues
\begin{proposition}
Throughout this proposition we assume that the parameters are generic (i.e. $t_r^{\pm 1}t_s^{\pm 1}p^c q^d\neq 1$ for all choices of the signs and any $c,d\in \mathbb{Z}$ only for $c,d=0$). Moreover we assume $k\in \mathbb{Z}_{\geq 0}$ and that $a=t_rp^b$  for some values of $1\leq r\leq 2m+6$ and $b\in \mathbb{Z}_{\geq 0}$ (in particular $aq^k$ is a pole of $I_m(z)$).
\begin{enumerate}
\item We have
\[
Res( \frac{I_m(z)}{z} sb(s_1,s_2,s_3;z),z=aq^k) = sb(s_1,s_2,s_3;a) Res(\frac{I_m(z)}{z}, z=aq^k).
\]
\item We have
\[
Res( \frac{I_m(z)}{z}, z=\frac{1}{aq^k}) = - Res( \frac{I_m(z)}{z},z=aq^k).
\]
\item We have 
\[
Res( \frac{I_m(z)}{z}, z=paq^k) = \frac{I_m(pa)}{I_m(a)} Res( \frac{I_m(z)}{z}, z=aq^k)
\]
where the quotient $I_m(pa)/I_m(a)$ should be interpreted as the value of $I_m(pz)/I_m(z)$ at the point $a$. In the relevant situations $I_m(pz)/I_m(z)$ typically has a removable pole at $a$.
\item We have for $\alpha \in \mathbb{Z}^{2m+6}$ with $\sum_r \alpha_r=0$ 
\[
Res( \frac{I_m(t_r p^{\alpha_r};z)}{z}, z=aq^k) = \frac{I_m(t_rp^{\alpha_r};a)}{I_m(t_r;a)} Res( \frac{I_m(t_r;z)}{z}, z=aq^k),
\]
and for $\alpha \in \mathbb{Z}^{2m+6}+ (\frac12)^{2m+6}$ with $\sum_r \alpha_r=0$ we have
\[
Res( \frac{I_m(t_r p^{\alpha_r};zp^{\frac12})}{z}, z=aq^k) = \frac{I_m(t_rp^{\alpha_r};ap^{\frac12})}{I_m(t_r;a)} Res( \frac{I_m(t_r;z)}{z}, z=aq^k).
\]
\end{enumerate}
\end{proposition}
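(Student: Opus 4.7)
Each of the four assertions should follow as a direct corollary of the symmetries stated in the previous proposition, combined with the definition of the residue. I would handle them in order. For (1), note that $sb(s_1,s_2,s_3;z)$ is $q$-periodic in $z$ and, under the genericity hypothesis, has no pole at $z=aq^k$ (its poles lie on $z^2\in q^{\mathbb{Z}}$ and $s_is_j\in q^{\mathbb{Z}}$, which are avoided by $z=t_rp^bq^k$), so it factors through the residue as the constant $sb(s_1,s_2,s_3;aq^k)=sb(s_1,s_2,s_3;a)$. For (2), I would change variables $w=1/z$ in the limit $\lim_{z\to 1/(aq^k)}(z-1/(aq^k))I_m(z)/z$; using $I_m(1/w)=I_m(w)$ and simplifying the resulting Jacobian produces exactly $-Res(I_m(w)/w,w=aq^k)$.

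For (3) and (4), the common strategy is to identify a $q$-periodic meromorphic ratio $r(z)$ such that the shifted integrand equals $r(z)$ times the original, and to move $r$ through the residue. For (3), the elliptic hypergeometric identity $I_m(pz)I_m(qz)=I_m(pqz)I_m(z)$ noted immediately after the preceding proposition is equivalent to $q$-periodicity of $r(z):=I_m(pz)/I_m(z)$. Changing variables $z=pw$ in the residue at $z=paq^k$ turns it into $Res(I_m(pw)/w,w=aq^k)=r(aq^k)Res(I_m(w)/w,w=aq^k)$, and $q$-periodicity reduces $r(aq^k)$ to $r(a)$, which by definition is $I_m(pa)/I_m(a)$. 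Statement (4) is identical, but with $r(z):=I_m(t_rp^{\alpha_r};z)/I_m(t_r;z)$ in the integer case and $r(z):=I_m(t_rp^{\alpha_r};p^{\frac12}z)/I_m(t_r;z)$ in the half-integer case; the $q$-periodicity of $r$ in these cases is exactly the second, respectively third, symmetry in the preceding proposition.

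The main obstacle is the need to interpret $I_m(pa)/I_m(a)$ (and the analogous quotients in (4)) carefully: at $z=a=t_rp^b$ the factor $\Gamma(t_r/z)=\Gamma(p^{-b})$ has a pole, so naively the ratio is $\infty/\infty$. This is what the statement refers to as a removable singularity. The verification amounts to rewriting the ratio using the difference equation $\Gamma(pz)=\theta(z;q)\Gamma(z)$ so that all gamma functions telescope and $r(z)$ becomes a ratio of finite products of theta functions, which is manifestly meromorphic on $\mathbb{C}^*$ and takes a finite, generically nonzero value at $z=a$. Once this bookkeeping is done, pulling $r(z)$ through the residue and invoking $q$-periodicity yields each of the claimed identities.
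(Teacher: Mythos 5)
Your proposal is correct and follows essentially the same route as the paper: each identity is obtained by pulling a $q$-elliptic, locally analytic factor (the symmetry-breaking term, or the ratio $I_m(p\,\cdot)/I_m(\cdot)$, resp.\ $I_m(t_rp^{\alpha_r};\cdot)/I_m(t_r;\cdot)$) through the residue after a change of variables, with (2) coming from the $z\to z^{-1}$ symmetry of the integrand. The only cosmetic difference is that for (2) you use the limit formula for a (generically simple) pole where the paper integrates over a small circle, and you spell out the removable-singularity bookkeeping via theta-function telescoping that the paper merely asserts.
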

\begin{proof}
The proofs are all relatively straightforward calculations involving the aforementioned symmetries. 
\begin{enumerate}
\item Due to the genericity of the parameters we see that $sb$ is analytic at $z=aq^k$. Combining this with the $q$-ellipticity of $sb$ we find
\begin{align*}
Res( \frac{I_m(z)}{z} sb(s_1,s_2,s_3,z),z=aq^k) &=
 sb(s_1,s_2,s_3,aq^k) Res(\frac{I_m(z)}{z}, z=aq^k) \\& 
= sb(s_1,s_2,s_3,a) Res(\frac{I_m(z)}{z}, z=aq^k).
\end{align*}
\item This follows from the $z\to z^{-1}$ symmetry of the integrand. Indeed, for a small positively oriented circle $C$ around the pole $a$ (such that no other poles of the integrand are contained inside this contour) we have
\begin{align*}
Res( \frac{I_m(z)}{z}, z=a) &= \frac{1}{2\pi i} \int_C \frac{I_m(z)}{z} dz = \frac{1}{2\pi i} \int_{C^{-1}} \frac{I_m(z^{-1})}{z^{-1}}
(-\frac{1}{z^2}) dz
\\ &= -\frac{1}{2\pi i } \int_{C^{-1}} \frac{I_m(z)}{z} dz = - Res( \frac{I_m(z)}{z}, z=a^{-1}).
\end{align*}
Here we replace $z\to z^{-1}$ in the second equality. The contour $C^{-1}$ will be closed contour containing $a^{-1}$, which is still traversed in positive direction (as the origin is not contained in the circle).
\item An initial calculation gives
\[
Res( \frac{I_m(z)}{z}, z=paq^k) = p Res( \frac{I_m(pz)}{pz}, z=aq^k) = Res( \frac{I_m(pz)}{z}, z=aq^k).
\]
Subsequently we remark that, for generic values of the parameters, if $I$ has a pole at $a$ and $k$ is a positive integer, then 
$I(pz)/I(z)$ is analytic in $aq^k$ (i.e. it has a removable singularity at that point). Thus we get
\[
Res( \frac{I_m(pz)}{z}, z=aq^k) = 
Res( \frac{I_m(z)}{z} \frac{I_m(pz)}{I_m(z)}, z=aq^k) = 
\frac{I_m(paq^k)}{I_m(aq^k)} Res( \frac{I_m(z)}{z} , z=aq^k) 
\]
Using the $q$-ellipticity of $I_m(pz)/I_m(z)$ now gives the desired result.
\item This follows from calculations nearly identical to the ones performed above, now using the $q$-ellipticity of 
$I(t_rp^{\alpha_r};z)/I(t_r;z)$.
\end{enumerate}
\end{proof}

These identities for the residues show that many different series of residues are equal, at least up to a constant. This means that their respective limits are also equal. Thus, if we are interested in knowing which basic hypergeometric functions can occur as limit, we have to consider many fewer cases than one might otherwise think. 
\begin{corollary}
The equations from the previous proposition lead to the following results:
\begin{enumerate}
\item We only have to consider limits of residues from the symmetric integral: The series obtained from taking limits in a symmetry broken version of the integral are equal to the series obtained from the symmetric integral, up to a multiplicative constant. It should be noted that specializing $s_1$, $s_2$, $s_3$ in a proper way will make this constant 0, thus reducing the number of series in (and hence the complexity of) the limit. However, this usually does not help because we can only remove one of the two identical series identified in the next point.
\item Taking limits of the symmetric integral, we obtain a factor 2 for moving the contour not just over the poles inside the unit circle, which have to be excluded from the original contour, but also the poles outside the unit circle, which have to be included in the original contour: The factor $-1$ in the equation between the residues negates the factor $-1$ obtained from the fact that we move the contour in opposite direction over the pole. 
\item Given a vector $\alpha$ and parameters $t_r$ we have: 
\begin{itemize}
\item There is at most one different unilateral series for each different value of $r$: The series of residues starting from $p t_rq^k$ is, up to a constant, identical to the series starting from $t_rq^k$. In particular we obtain at most a single different unilateral series for each $t_r$. Due to the permutation symmetry of the $t_r$'s these series are very similar for different $t_r$'s, as long as the associated $\alpha_r$'s are equal up to an integer.
\item All bilateral series for a given vector $\alpha$ (thus for different values of $r$ and different parts of the same series) are essentially equal: Writing $p=xq^v$, we obtain a bilateral series as the limit of 
\[
\sum_{k=\beta v}^{\gamma v} Res( z=t_r q^k) = 
\sum_{k=(\beta-\rho) v}^{(\gamma-\rho)v} Res(z=t_r q^{k+\rho v})
= 
\sum_{k=(\beta-\rho) v}^{(\gamma-\rho)v} Res(z=\frac{t_r}{x^{\rho}}  q^{k} p^{\rho}).
\]
Changing the value of $x$ allows us to obtain any value for $t_r/x^{\rho}$.
\end{itemize}
\item The same series appear for two vectors $\alpha$ which differ by an element of $\mathbb{Z}^{2m+6} \cup (\mathbb{Z}^{2m+6} + (\frac12)^{2m+6})$. The limit will still be different for these different $\alpha$'s as which series we have to include in the limit is still dependent on $\alpha$.
\end{enumerate}
\end{corollary}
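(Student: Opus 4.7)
The plan is to derive each of the four items by direct application of the corresponding part of the preceding proposition, using the $q$-ellipticity of the relevant quotients to factor a $k$-independent constant out of every series of residues.

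For item~(1), I would consider a symmetry-broken integrand $I_m(z)\,sb(s_1,s_2,s_3;z)/z$. After shifting the contour to pick up a series of residues at $z=aq^k$, part~(1) of the proposition gives that each residue equals $sb(s_1,s_2,s_3;a)$ times the residue of $I_m(z)/z$ at the same point. Crucially, by $q$-ellipticity of $sb$ the prefactor is independent of $k$, so the whole series is a fixed scalar multiple of the corresponding series from the symmetric integral. The prefactor vanishes precisely when $sb(s_1,s_2,s_3;a)=0$, which can be arranged for only one pole-series at a time.

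For item~(2), the argument is orientation-based. By part~(2), residues at reciprocal poles differ by a sign. When we move the contour across a pole $a^{-1}q^{-k}$ originally \emph{included} inside the contour (pushing it outside), we acquire a minus sign relative to moving the contour across the pole $aq^k$ originally \emph{excluded} (pushed inside). These two minus signs cancel, so the two reciprocal pole-sequences contribute identically, producing the claimed factor of $2$.

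For item~(3), the first bullet comes from part~(3): the residue at $z=p^l t_r q^k$ is, by $q$-ellipticity of $I_m(pz)/I_m(z)$ (and its iterates), a $k$-independent multiple of the residue at $z=t_r q^k$, so all the vertical $p$-towers collapse into one genuinely new unilateral series per index $r$; the permutation symmetry then makes series from different $r$ with the same $\alpha_r\bmod\mathbb{Z}$ closely related. The second bullet I would handle by a change of summation variable: in the bilateral piece $\sum_{k=\beta v}^{\gamma v}\text{Res}(z=t_rq^k)$, the substitution $k\mapsto k+\rho v$ rewrites the summand as $\text{Res}(z=(t_r/x^{\rho})\,p^{\rho}q^{k})$, so every bilateral series has the same formal shape, with only $t_r/x^\rho$ as a free parameter, which can be made arbitrary by varying $x$ along the geometric sequence. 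For item~(4), part~(4) says that residues of $I_m(t_rp^{\alpha_r};z)$ at the appropriate points are $k$-independent multiples of residues of $I_m(t_r;z)$, so the formal series that arise are the same; only the selection of which series occur in the limit, and their rescalings, depends on~$\alpha$.

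The only delicate step I anticipate is item~(2): the bookkeeping of the two signs — one from $z\mapsto z^{-1}$ acting on the residue and one from the reversed orientation of the contour motion across reciprocal poles — must be tracked carefully to confirm that the net effect is a factor of $+2$ rather than $0$.
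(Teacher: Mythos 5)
Your proposal is correct and follows essentially the same reasoning the paper relies on: each item is obtained by applying the corresponding part of the residue-symmetry proposition, using $q$-ellipticity of $sb$ and of $I_m(pz)/I_m(z)$ to pull out $k$-independent constants, the sign cancellation between the reciprocal-residue relation and the reversed contour orientation for the factor of $2$, and the summation-index shift $k\mapsto k+\rho v$ for the bilateral series. No gaps; the delicate sign bookkeeping you flag in item (2) resolves exactly as you describe.
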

Note that the unilateral series are equal to a bilateral series with one of the parameters specialized, to be precise: a bottom parameter specialized to $q$, or a top parameter specialized as $1$.

\section{The Weyl group $W(E_6)$}\label{secweyl}
Starting from this section we will approach the combinatorial problem of finding which values of $\alpha_r$ and $\zeta$ lead to interesting limits. We will restrict ourselves from now on to the $m=0$ (evaluation) version of the elliptic beta integral.
The Weyl group of type $E_6$ describes the symmetric structures we encounter when considering the elliptic beta integral evaluation. Understanding those symmetries is vital to be able to make several quick reductions and simplifications. Thus we start our exposition on the evaluation formula by discussing the Weyl group of type $E_6$. A more thorough discussion of Weyl groups can be found in \cite{Hum}.

\subsection{The root system}
The root system, a set of vectors satisfying certain specific requirements, is the starting point for defining a Weyl group. While the 6 in $E_6$ denotes that we consider a root system in a 6-dimensional space, for our purposes it is convenient to consider the root system to be embedded in an 8-dimensional space, as a subset of the root system of $E_8$. 
\begin{definition}
Let $e_r$ denote the $r$'th standard unit vector in $\mathbb{R}^8$ and set 
\[
\rho = \frac12 \sum_{r=1}^8 e_r = (\frac12,\frac12,\frac12,\frac12,\frac12,\frac12,\frac12,\frac12) \in \mathbb{R}^8.
\]
The root systems of type $E_8$, $E_7$ and $E_6$ are given by
\begin{align*}
R(E_8) & := \{ v\in \mathbb{Z}^8 \cup (\mathbb{Z}^8 + \rho) ~|~ v\cdot v=2, v\cdot \rho \in \mathbb{Z} \} \\
R(E_7) & := \{ v\in R(E_8) ~|~ v\cdot \rho=0\} \\
R(E_6) &:= \{v\in R(E_7) ~|~ v\cdot (e_7+e_8) =0 \}.
\end{align*}
\end{definition} 
The roots of $E_8$ are given by the vectors $(0,0,0,0,0,0,\pm 1,\pm 1)$ with arbitrary signs and all its permutations, and by $(\pm \frac12,\pm \frac12,\cdots, \pm \frac 12)$ with an even number of minus signs. The roots of $E_7$ are the permutations of $(-1,0,0,0,0,0,0,1)$ and $(-\frac12,-\frac12,-\frac12,-\frac12,\frac12,\frac12,\frac12,\frac12)$. 

The set of roots of $E_6$ in this representation is only $S_6 \times S_2$ symmetric (due to the special role of the last two coordinates). They are given by the $S_6$ permutations of $(-1,0,0,0,0,1;0,0)$, the $S_2$ permutations of $(0,0,0,0,0,0;-1,1)$ and the $S_6\times S_2$ permutations of $(-\frac12,-\frac12,-\frac12,\frac12,\frac12,\frac12;-\frac12,\frac12)$.

\begin{definition}
A basis $\Delta$ of a root system $R$ is a subset of the roots such that any root $r$ or its inverse $-r$ can be written as a positive linear combination of the roots in $\Delta$. The elements of the basis are called simple roots.
\end{definition}
An example of a basis for $R(E_6)$ is given by the set
\[
\{ e_1-e_2,e_2-e_3,e_3-e_4,e_4-e_5,e_5-e_6,u\}, \qquad u = \frac12(-e_1-e_2-e_3+e_4+e_5+e_6+e_7-e_8).
\]
It can be extended to a basis for $R(E_7)$ by adding $e_6-e_7$ and to $R(E_8)$ by adding both $e_6-e_7$ and $e_7+e_8$\footnote{For those interested in the geometry of this configuration we would like to note that $\rho$ is the longest root of $E_8$ for this basis.}. 

\begin{definition}
The root lattice $\Lambda$ is the lattice generated by the roots $R$. Thus it is the set of all $\mathbb{Z}$-linear combinations of roots.
\end{definition}

\subsection{The Weyl group}
Once we know a root system we can define the Weyl group which then describes the symmetries of the root system.
\begin{definition}
The Weyl group $W$ associated to a root system $R$ is the group generated by the reflections $s_r$ in the hyperplanes orthogonal to the root $r\in R$. That is 
\[
s_r(v) := v- 2\frac{v\cdot r}{r\cdot r} r.
\]
\end{definition}
An important property of root systems is that they are preserved under the action of the Weyl group. Indeed, the definition for $R(E_8)$ immediately shows that 
$r_1 \cdot r_2 \in \mathbb{Z}$ for $r_1,r_2 \in R(E_8)$ and 
$r_1\cdot r_1 =2$. Thus $s_{r_1}(r_2)= r_2 - (r_1\cdot r_2) r_1$ is another vector in the lattice $\mathbb{Z}^8 \cup (\mathbb{Z}^8 + \rho)$ with squared norm equal to 2, and integer inner product with $\rho$, i.e. $s_{r_1}(r_2) \in R(E_8)$ as well. 

Subsequently it is clear that if one restricts a set of roots to a certain hyperplane the remaining roots are invariant under the new smaller Weyl group as it is generated by reflections preserving this hyperplane. This shows that $R(E_7)$ is preserved by $W(E_7)$ and $R(E_6)$ is preserved by $W(E_6)$. 

Note that the fact that root systems are preserved under the Weyl group implies that the Weyl group is finite. Indeed, as the root system spans the underlying vector space, the action of any element in a group of linear transformations on that vector space is completely determined by its action on the roots. This gives an interpretation of the Weyl group as a subgroup of the group of permutations of the set of roots. 

A more concise representation of a Weyl group is obtained by observing that it is already generated by the reflections in the roots of a basis of the root system. If we want to prove invariance under the Weyl group we thus only have to check invariance under those reflections.

Like the root systems the associated root lattices are preserved by the Weyl group action. If we consider the root lattice to be a commutative group, we can define the affine Weyl group.
\begin{definition}
The affine Weyl group $\tilde W$ associated to a root system $R$ of type $E_n$ is given as the semi-direct product $\tilde W := W \ltimes \Lambda$. \footnote{In general one would need the co-root lattice instead of the root lattice, but in this case these two lattices are identical.}
\end{definition}
A natural representation of the affine Weyl group is obtained by letting the Weyl group elements act as (products of) reflections, and the lattice act by translations (the translation $t_{\lambda}$ associated to $\lambda \in \Lambda$ being $t_{\lambda}(v) =v+\lambda$). This gives a representation of $\tilde W(E_6)$ not just on the space 
$\{v\in \mathbb{R}^8 ~|~ v\cdot \rho = v\cdot (e_7+e_8)=0\}$ spanned by the roots, but in fact on any space of the form $V_{a,b}:= \{v\in \mathbb{R}^8 ~|~ v\cdot \rho=a, v\cdot (e_7+e_8) =b\}$ for $a,b\in \mathbb{R}$. We will be mostly interested in the representation on $V_{\frac12,0}$. 

Due to the symmetry breaking between the first six and the last two coordinates, we typically write elements of $V_{\frac12,0}$ as 
$(\alpha_1,\alpha_2,\ldots,\alpha_6;\zeta)\in \mathbb{R}^6 \times \mathbb{R}$, which stands for 
 $v:=(\alpha_1,\alpha_2,\ldots,\alpha_6,\frac12 -\zeta,\zeta-\frac12) \in \mathbb{R}^8$. The equation $(e_7+e_8)\cdot v=0$ is then always satisfied, so we only have the balancing condition $1=2\rho \cdot v=\sum_{r=1}^6 \alpha_r$.

\section{Maximizing and minimizing the integrand of the $m=0$ elliptic beta integral}\label{sec8}
In this section we consider the asymptotic behavior of the integrand of the elliptic beta integral, with $t_r=u_rp^{\alpha_r}$ substituted and $z$ replaced by $zp^{-\zeta}$. The goal of this section is to determine, for every $\alpha$, which values of $\zeta$ maximize respectively minimize this asymptotic behavior. The idea is that we need to know where the maximum is attained to apply Theorem \ref{thmserlim}. If we want to obtain an integral limit it is best to attempt to do so with $\zeta$ at a location where the first order behavior is minimized as explained in Section \ref{sec6}. 

To be precise we will consider the rescaled integrand
\[
\IR_{0}= 
\frac{\prod_{r=1}^6 \Gamma(u_r p^{\alpha_r-\zeta} z, u_r p^{\alpha_r+\zeta}\frac{1}{z})}{
\prod_{1\leq r<s\leq 6} \Gamma(p^{\alpha_r+\alpha_s} u_ru_s)
\Gamma(p^{-2\zeta} z^2, p^{2\zeta} z^{-2})}.
\]
and look at (mainly) its first order behavior.

However, let us first quickly consider the asymptotic behavior of the function which we use for symmetry breaking, 
\[
sb(s_r;z) = \frac{\theta(s_1z,s_2z,s_3z,\frac{s_1s_2s_3}{z};q)}{\theta(z^2,s_1s_2,s_1s_3,s_2s_3;q)}.
\]
Using the substitutions $s_r \to p^{\beta_r} s_r$ and $z\to z p^{-\zeta}$ we find that 
\[
 sb(s_rp^{\beta_r};zp^{-\zeta})  
= sb(s_r x^{\beta_r};zx^{-\zeta})
\]
as long as $p=xq^v$ and $v\zeta, v\beta_r\in \mathbb{Z}$. In particular the first order behavior of $sb$ is 0 (i.e.\ non-existent) and the second order behavior is 1. This means that the asymptotic behavior of the symmetric integrand is exactly equal to the asymptotic behavior of the asymmetric integrand. Therefore we can and will restrict our attention to the first.

\begin{lemma}\label{lem91}
The rescaled symmetric integrand $\IR_{0}$ has first order behavior given by 
\begin{equation}\label{eqfob}
fob(\alpha,\zeta) = \sum_{1\leq r<s\leq 6} g(\{\alpha_r+\alpha_s\}) -\sum_{r=1}^6 \big[g(\{\alpha_r-\zeta\}) + g(\{\alpha_r+\zeta\})\big] 
\end{equation}
where $g(x) = \frac16 x(x-1)(2x-1)$ as before. The second order behavior  is given by 
\begin{align} \label{eqsob}
sob(\alpha,\zeta;u_r,x,q,z) &   = 
%
x^{fob(\alpha,\zeta)} 
z^{-\frac12 \frac{\partial}{\partial\zeta} fob(\alpha,\zeta)}
\prod_{r=1}^6 u_r^{\frac12 \frac{\partial}{\partial\alpha_r} fob(\alpha,\zeta) }
\\& \qquad \times
q^{-\frac14 -\binom{\{2\zeta\}}{2} +
\frac12 \sum_{r=1}^6 \left[\binom{\{\alpha_r-\zeta\}}{2}  + \binom{\{\alpha_r+\zeta\}}{2}\right]
-\frac12 \sum_{1\leq r<s\leq 6} \binom{\{\alpha_r+\alpha_s\}}{2} 
}  \nonumber
\end{align}
\end{lemma}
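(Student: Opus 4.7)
The plan is to decompose $\IR_{0}$ into its constituent elliptic gamma factors, apply the asymptotic estimate from the theorem in Section~\ref{secasym} to each factor, and simplify the resulting raw expressions using the balancing conditions $\sum_r\alpha_r=1$ and $\prod_r u_r=q$. Concretely, I write $\IR_{0}$ as a product of twelve numerator factors $\Gamma(u_rp^{\alpha_r\mp\zeta}z^{\pm1})$ divided by seventeen denominator factors (fifteen of the form $\Gamma(p^{\alpha_r+\alpha_s}u_ru_s)$ together with the pair $\Gamma(p^{\pm2\zeta}z^{\mp2})$). Each factor $\Gamma(p^{\beta}y)$ contributes $g(\beta)-g(\{\beta\})$ to the first-order behavior and the monomial $y^{\binom{\beta}{2}-\binom{\{\beta\}}{2}}\,x^{g(\beta)-g(\{\beta\})}\,q^{\frac12(\binom{\{\beta\}}{2}-\binom{\beta}{2})}$ to the second-order behavior. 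Summing these with signs determined by numerator/denominator position yields raw expressions for $fob(\IR_{0})$ and $sob(\IR_{0})$, each naturally splitting into a ``polynomial'' piece (functions of $g(\cdot)$ or $\binom{\cdot}{2}$ of integer-linear combinations of $\alpha_r,\zeta$ without fractional parts) and a ``fractional'' piece (the same with curly braces).

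The identity~\eqref{eqfob} for $fob$ reduces to two elementary checks. First, expanding $g(x)=\tfrac13x^{3}-\tfrac12x^{2}+\tfrac16x$ and invoking $\sum_r\alpha_r=1$, a direct computation gives
\[
\sum_r\bigl[g(\alpha_r-\zeta)+g(\alpha_r+\zeta)\bigr]-\sum_{r<s}g(\alpha_r+\alpha_s)-g(2\zeta)-g(-2\zeta)=0,
\]
so the polynomial part of the raw expression vanishes. Second, on the fractional side, the elementary identity $g(x)+g(1-x)=0$ yields $g(\{-2\zeta\})+g(\{2\zeta\})=0$ for every $\zeta$, killing the last surviving piece. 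What remains is exactly \eqref{eqfob}.

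For \eqref{eqsob} I track the exponents of $x$, $z$, $u_r$ and $q$ in $sob(\IR_{0})$ separately. The $x$-exponent is automatic: summing each factor's $x$-contribution gives exactly $fob(\alpha,\zeta)$. The $z$-exponent is handled by a computation parallel to the $fob$ cancellation but with $\binom{\cdot}{2}$ in place of $g$: its polynomial part vanishes by the same balancing identity, and its fractional part equals $-\tfrac12\,\partial fob/\partial\zeta$ once one invokes $\binom{\{y\}}{2}=\binom{\{-y\}}{2}$ (immediate from $\{y\}+\{-y\}\in\{0,1\}$). The $u_r$- and $q$-exponents are the delicate case and constitute the main obstacle: each carries a polynomial residue depending on $\zeta^{2}$ and $\sum_s\alpha_s^{2}$, and neither matches the lemma's expression in isolation. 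Using $\prod_ru_r=q$ to transfer a uniform shift $\delta=\tfrac14+\zeta^{2}-\tfrac12\sum_s\alpha_s^{2}$ from the $u_r$-channel into the $q$-channel (rewriting $\prod_ru_r^{\delta}=q^{\delta}$), the $u_r$-exponent collapses to exactly $\tfrac12\,\partial fob/\partial\alpha_r$ while the $q$-exponent collapses to the stated fractional expression plus the residual constant $-\tfrac14$. The main subtlety is thus the recognition that the clean statement \eqref{eqsob} bakes in both balancing conditions simultaneously, and that the constant $-\tfrac14$ in the $q$-exponent is precisely the residue of this rebalancing.
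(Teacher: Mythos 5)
Your proposal is correct and follows essentially the same route as the paper: decompose $\IR_0$ into its elliptic gamma factors, sum the first and second order behaviors known from Section \ref{secasym}, and simplify via the balancing conditions together with $g(x)=-g(1-x)$ and $\binom{\{y\}}{2}=\binom{\{-y\}}{2}$. The paper dispatches the second order behavior with a single ``likewise,'' whereas you spell out the rebalancing $\prod_r u_r^{\delta}=q^{\delta}$ with $\delta=\tfrac14+\zeta^2-\tfrac12\sum_s\alpha_s^2$ and the resulting constant $-\tfrac14$; this is exactly the computation the paper leaves implicit, and your details check out.
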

\begin{proof}
We obtain this result by viewing the integrand as a product of elliptic gamma functions whose first and second order behaviors are known. At first this leads to the expression
\begin{align*}
fob(\alpha,\zeta) & = \sum_{r=1}^6 [g(\alpha_r-\zeta) + g(\alpha_r+\zeta) - g(\{\alpha_r-\zeta\}) -g(\{\alpha_r+\zeta\}) ] \\ & \qquad - \sum_{1\leq r<s\leq 6} [g(\alpha_r+\alpha_s)- g(\{\alpha_r+\alpha_s\})]
\\ & \qquad - g(2\zeta) -g(-2\zeta) + g(\{2\zeta\})+ g(\{-2\zeta\}), 
\end{align*}
which can be simplified to the expression given by expanding the $g$ terms whose arguments are no fractional parts, using the balancing condition, and observing that $g(x)=-g(1-x)$ and $\{2\zeta\}=1-\{-2\zeta\}$.  Likewise we have simplified the expression for the second order behavior.
\end{proof}

Let us first make a few observations about these expressions. First of all we observe that the first and second order behaviors are 1-periodic in $\zeta$, and almost $\zeta \to -\zeta$ symmetric (the only non-symmetric term is the power of $z$ in the second order behavior). Moreover we observe that typically we would like to choose $\zeta$ at an extreme value of the first order behavior, which ensures that the derivative to $\zeta$ of $fob(\alpha,\zeta)$ vanishes at those points, and thus the power of $z$ in the second order behavior disappears. This is as desired as any rescaling factor should surely be independent of the integration or summation variable. 

Let us now focus on the first order behavior. The integrand is largest for values of $\zeta$ that maximize this function and smallest when $fob$ is minimized. Only if the extremal value is non-unique does the second order behavior come into play. 
The following lemma describes the basic symmetries of $fob$
\begin{lemma}
The function $fob$ is invariant under the action of the affine Weyl group $W(\tilde E_6)$. Here we should take as coordinates $(\alpha_1,\ldots,\alpha_6,\frac12-\zeta,\zeta-\frac12)\in V_{\frac12,0}$.

Moreover we have 
\begin{equation}\label{eqfobnegrefl}
fob(\alpha,\zeta) = -fob(w-\alpha,\zeta), \qquad \forall w\in \mathbb{Z}^6 : \sum_r w_r=2.
\end{equation}
\end{lemma}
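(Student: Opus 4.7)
The plan is to prove the two claims separately, both using the identity $g(1-y)=-g(y)$, which follows directly from the definition of $g$, combined with $\{-x\}=1-\{x\}$ (and $g(0)=0$ for the integer case) to give $g(\{-x\})=-g(\{x\})$ for every real $x$. The negation property drops out immediately: under $\alpha\mapsto w-\alpha$ with $w\in\mathbb{Z}^6$, every argument of a $g(\{\cdot\})$-term in $fob$, whether $(w_r-\alpha_r)+(w_s-\alpha_s)$ or $(w_r-\alpha_r)\pm\zeta$, reduces modulo $\mathbb{Z}$ to the negative of the corresponding original argument, so each of the 27 terms flips sign. The constraint $\sum_r w_r=2$ is precisely what preserves the balancing condition $\sum_r(w_r-\alpha_r)=1$, keeping the new point in $V_{\frac12,0}$.

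For invariance under $\tilde W(E_6)=W(E_6)\ltimes\Lambda$ I would check on generators. The five simple reflections $s_{e_r-e_{r+1}}$ act as transpositions of $\alpha_r,\alpha_{r+1}$ and preserve $fob$ manifestly. For the translation subgroup it suffices to handle the six simple roots: the integer roots $e_r-e_{r+1}$ shift each argument of $fob$ by an integer, and translation by the half-integer simple root $u=\frac12(-e_1-e_2-e_3+e_4+e_5+e_6+e_7-e_8)$ also shifts every pair-sum $\alpha_r+\alpha_s$ and every $\alpha_r\pm\zeta$ by an integer, because the half-integer shifts on $\alpha$ and on $\zeta$ are coordinated to combine in pairs. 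In all these cases $1$-periodicity of $g(\{\cdot\})$ gives invariance.

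The only genuinely nontrivial generator is the exceptional reflection $s_u$. A direct computation gives $v\cdot u=\alpha_4+\alpha_5+\alpha_6-\zeta$ (using $\sum_r\alpha_r=1$), so $s_u$ acts by $\alpha_r\mapsto\alpha_r+\mu$ for $r\le 3$, $\alpha_r\mapsto\alpha_r-\mu$ for $r\ge 4$, and $\zeta\mapsto\zeta+\mu$, with $\mu=\frac12(\alpha_4+\alpha_5+\alpha_6-\zeta)$. Reducing the 27 new arguments modulo $\mathbb{Z}$ via the balancing condition produces identities like $\alpha'_1+\alpha'_2\equiv-\alpha_3-\zeta$, $\alpha'_4+\alpha'_5\equiv\zeta-\alpha_6$, $\alpha'_r+\zeta'\equiv-\alpha_s-\alpha_t$ for $\{r,s,t\}=\{1,2,3\}$, and $\alpha'_r-\zeta'\equiv-\alpha_s-\alpha_t$ for $\{r,s,t\}=\{4,5,6\}$; the nine mixed pair-sums ($r\le 3$, $s\ge 4$) are unchanged, as are $\alpha'_r-\zeta'$ for $r\le 3$ and $\alpha'_r+\zeta'$ for $r\ge 4$. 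Applying $g(\{-x\})=-g(\{x\})$, the transformed arguments collectively reproduce the 27 original ones up to sign, and the ``$+$ on pair-sums, $-$ on $\zeta$-terms'' sign structure of $fob$ is exactly what makes all the flipped signs cancel in the sum, so that $fob(\alpha',\zeta')=fob(\alpha,\zeta)$.

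The main obstacle is the combinatorial bookkeeping in this last step: matching 27 transformed arguments to 27 original ones with the correct sign pattern. A conceptual reason for the cancellation is that, modulo integer shifts, the 27 arguments correspond to the weights of a minuscule representation of $E_6$, on which $W(E_6)$ acts transitively; but making that identification precise is essentially equivalent to performing the tabulation outlined above.
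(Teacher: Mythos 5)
Your proposal is correct and follows essentially the same route as the paper: verify invariance on generators of $W(\tilde E_6)$ (permutations immediate, the exceptional reflection $s_u$ by explicit computation using $g(\{x\})=-g(\{-x\})$, translations via $1$-periodicity of $g(\{\cdot\})$), and obtain the negation relation \eqref{eqfobnegrefl} by the same direct sign-flip argument. Your computation of the $s_u$-action ($\mu=\tfrac12(\alpha_4+\alpha_5+\alpha_6-\zeta)$) matches the paper's explicit formula, and the bookkeeping of the $27$ arguments that the paper leaves implicit is carried out correctly.
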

\begin{proof}
All identities follow from direct verification of invariance under a set of generators of $W(\tilde E_6)$. The affine Weyl group is generated by the simple reflections (which together generate the ordinary Weyl group) together with one translation along a root. \footnote{This is true as the roots of $E_6$ form a single orbit under the Weyl group action.} 

The invariance of $fob$ under permutations of the $\alpha_r$ is immediate, so we only need to check explicitly the reflection in $u=\frac{-e_1-e_2-e_3+e_4+e_5+e_6+e_7-e_8}{2}$. Explicitly this reflection maps
\begin{align*}
(\alpha;\zeta) &\to (\frac{1+\alpha_1-\alpha_2-\alpha_3-\zeta}{2},
\frac{1-\alpha_1+\alpha_2-\alpha_3-\zeta}{2},
\frac{1-\alpha_1-\alpha_2+\alpha_3-\zeta}{2} \\ & \qquad \qquad ,
\frac{\alpha_4-\alpha_5-\alpha_6+\zeta}{2},
\frac{-\alpha_4+\alpha_5-\alpha_6+\zeta}{2},
\frac{-\alpha_4-\alpha_5+\alpha_6+\zeta}{2} \\ & \qquad \qquad ;
\frac{\zeta+\alpha_4+\alpha_5+\alpha_6}{2}= \frac{\zeta+1 -\alpha_1-\alpha_2-\alpha_3}{2})
\end{align*}
Using $g(\{x\})=-g(\{-x\})$ the verification that $fob(s_u(\alpha;\zeta))=fob(\alpha;\zeta)$ now follows.

As for translations, we note that $fob$ is obviously invariant for translations of the $\alpha_r$ with integers, and in particular also for the translation along the root $e_1-e_2$. 

The final equation follows from direct verification, using once again that $g(\{x\}) = -g(\{-x\})$.
\end{proof}

If the question was to determine when $fob$ was maximized over all $\alpha_r$ and $\zeta$, we know that this maximum occurs somewhere in a fundamental domain of these symmetries; thus we would only have to consider the case $(\alpha;\zeta)$ in such a fundamental domain. However, the question at hand is to find the extremal values of $fob$ as a function of $\zeta$, for given, fixed, values of $\alpha_r$. 

We can still use these symmetries. We can restrict $\zeta$ to a fundamental domain of the symmetries which fix the $\alpha_r$. Moreover we can use the symmetries of the form $w(\alpha;\zeta) = (f(\alpha);g(\alpha;\zeta))$ for some functions $f$ and $g$; i.e. those symmetries where the new $\alpha_r$ do not depend on $\zeta$, to equate the problem of finding the extremal values for $\alpha$ with the same problem for $f(\alpha)$.

Thus we restrict ourselves to 
\begin{equation}\label{eqdom1}
\alpha_1 \geq \alpha_2 \geq \alpha_3\geq \alpha_4\geq \alpha_5 \geq \alpha_6 \geq \alpha_1-1, \qquad 
0\leq \zeta\leq \frac12.
\end{equation}
This is not the smallest set we can find as we could even use the symmetries to restrict to the case $\alpha_1+\alpha_2\leq 1$, but it small enough that we can analyze the function completely for these values of $\alpha_r$. As the orbit of the set of $\alpha$'s considered is the entire domain (even if we just consider translations), this knowledge suffices to obtain the locations of the extremal values for all values of $\alpha$.

\begin{theorem}\label{thm93}
The location of the extremal values of $fob(\alpha,\zeta)$ in the domain given by \eqref{eqdom1} are, for generic values of $\alpha$, determined by the following table:

\begin{tabular}{cc|cc}
$\alpha_1+\alpha_2$ & $\alpha_4+\alpha_5$ & min & max \\
\hline
 $\leq 1$ &$\geq 0$ & $[0,\max(0,-\alpha_5)]$ & $[\min(\frac12,1-\alpha_1),\frac12]$ \\
$\geq 1$  &$\leq 0$ &  $[\min(\frac12,\alpha_2),\frac12]$ & $[0,\max(0,\alpha_4)]$ \\
$\leq 1$ & $\leq 0$ &  $-\alpha_4-\alpha_5-\alpha_6$ & $[0,\max(\alpha_4)]$ or $[\max(\frac12,1-\alpha_1),\frac12]$ \\
$\geq 1$ & $\geq 0$ &  $[0,\max(0,-\alpha_5)]$ or $[\min(\frac12,\alpha_2),\frac12]$ & $\alpha_3+\alpha_4+\alpha_5$ 
\end{tabular}

Here generic values of $\alpha$  are those for which no pair of parameters sums to an integer (i.e. $\alpha_r+\alpha_s\not \in \mathbb{Z}$ for all $r<s$). The first two columns of the table give conditions on which part of the domain is under consideration, the last two columns give the appropriate locations of extremal values. If an interval is given, this means that the value is constant and extremal on that interval.
\end{theorem}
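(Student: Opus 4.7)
The plan is to reduce the problem to analyzing a piecewise-quadratic function of $\zeta$ on $[0,\tfrac12]$ and then case-split on the signs of $\alpha_1+\alpha_2-1$ and $\alpha_4+\alpha_5$.

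The only $\zeta$-dependent part of $fob(\alpha,\zeta)$ is
\[
H(\zeta) := -\sum_{r=1}^6 \big[g(\{\alpha_r-\zeta\})+g(\{\alpha_r+\zeta\})\big].
\]
On any open sub-interval of $[0,\tfrac12]$ on which no $\alpha_r\pm\zeta$ crosses an integer, each pair $g(\{\alpha_r-\zeta\})+g(\{\alpha_r+\zeta\})$ is a polynomial in $\zeta$. Expanding the cubic $g$ about the relevant integer shift and using $g(1-y)=-g(y)$, the odd-degree contributions in $\zeta$ cancel, leaving a quadratic whose leading coefficient equals $\{\alpha_r-\zeta\}+\{\alpha_r+\zeta\}-1$ and is constant on the sub-interval. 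Summing over $r$, the function $H$ is piecewise quadratic on $[0,\tfrac12]$, with leading coefficient that changes in discrete steps at each breakpoint.

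Next I would catalogue the breakpoints. For generic $\alpha$ each $r$ contributes a unique breakpoint in $(0,\tfrac12)$, located at $\{\alpha_r\}$ if $\{\alpha_r\}\leq\tfrac12$ and at $1-\{\alpha_r\}$ if $\{\alpha_r\}\geq\tfrac12$. Under the fundamental-domain constraints \eqref{eqdom1}, the ordering of these breakpoints and the associated sign changes in the leading coefficient of $H$ are governed exactly by the combinatorial conditions on $\alpha_1+\alpha_2$ (compared with $1$) and $\alpha_4+\alpha_5$ (compared with $0$), combined with the balancing condition $\sum_r\alpha_r=1$. These conditions encode how many $\alpha_r$ are positive versus negative and whether their fractional parts lie above or below $\tfrac12$, which determines the pattern of breakpoints and hence the four cases of the table.

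On each piece the extremum of the piecewise quadratic $H$ is either at an endpoint, at an interior critical point (a unique rational $\zeta$ obtained from the explicit linear equation $H'(\zeta)=0$), or else the whole piece is constant (when the leading coefficient vanishes). I would combine this with the $\zeta\to-\zeta$ symmetry of $fob$, which forces $H'(0)=0$, and with continuity of $H$ to read off, piece by piece, where the global extrema on $[0,\tfrac12]$ occur. Constancy of $H$ on a piece produces precisely the interval-valued entries of the table; a unique interior critical point produces the single-point entries $-\alpha_4-\alpha_5-\alpha_6$ in case 3 and $\alpha_3+\alpha_4+\alpha_5$ in case 4, which one finds by solving $H'(\zeta)=0$ on the central piece and using the balancing condition to simplify.

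The main obstacle is the bookkeeping of the four cases, especially the disjunctive entries in cases 3 and 4. There one must verify that the leading quadratic coefficient of $H$ vanishes on two non-adjacent pieces (so that both intervals of constancy are genuine extrema), that the intermediate pieces connect these two plateaus monotonically, and that the single-point extremum on the central piece is of the opposite type (maximum in case 4, minimum in case 3). Once the piecewise coefficients have been computed explicitly in each case, the table entries follow by reading off where the extrema occur.
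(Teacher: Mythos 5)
Your overall strategy coincides with the paper's: view $fob(\alpha,\cdot)$ as a piecewise quadratic function of $\zeta$ on $[0,\tfrac12]$ with breakpoints at the folded fractional parts $\{\pm\alpha_r\}$, split into the four cases according to the signs of $\alpha_1+\alpha_2-1$ and $\alpha_4+\alpha_5$, locate interior critical points by solving the linear equation for the vanishing of the derivative, and identify the interval entries as plateaus. But as written the plan has a genuine gap at its pivot: you assert that the ordering of the breakpoints and the sign changes of the quadratic coefficient "are governed exactly by" the two conditions on $\alpha_1+\alpha_2$ and $\alpha_4+\alpha_5$, and that this "determines the pattern of breakpoints and hence the four cases of the table." That is precisely the content of the theorem, and no mechanism is offered for it; the two binary conditions carry far less information than the ordering of twelve fractional parts, so some argument is needed to show that nothing else matters. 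The paper supplies exactly this: $\frac{d^2}{d\zeta^2}fob$ is an integer-valued step function taking even values, with exactly six jumps of size $2$ on $(0,\tfrac12)$ (down at $\{\alpha_r\}$, up at $\{-\alpha_r\}$), and equal to a multiple of $4$ at $\zeta=0$; since the first derivative vanishes at both endpoints, non-monotonicity forces the second derivative to be $-2$ after one jump, $+2$ after three, $-2$ after five (or the sign-flipped pattern), and these two boundary signs are shown to be equivalent to $\alpha_4+\alpha_5\le 0$ and (via the half-period symmetry exchanging $\zeta\leftrightarrow\tfrac12-\zeta$ and the two triples of parameters) to $\alpha_1+\alpha_2\le 1$. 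Integrating the step function then gives the interior zero at $-\alpha_4-\alpha_5-\alpha_6$ (resp. $\alpha_3+\alpha_4+\alpha_5$). Without this parity/counting argument your plan reduces to the brute-force enumeration of breakpoint orderings that the paper explicitly describes as its original, very tedious route, and your sketch does not actually carry out that enumeration either.

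Two further inaccuracies would need repair. First, it is not true that "the odd-degree contributions in $\zeta$ cancel" on each piece: for $g(\{\alpha_r+\zeta\})+g(\{\alpha_r-\zeta\})$ only the cubic terms cancel, while the linear term is $(a-b)(a+b-1)\zeta$ with $a=\{\alpha_r+\zeta\}-\zeta$, $b=\{\alpha_r-\zeta\}+\zeta$, which is generally nonzero away from $\zeta=0$; taken literally your claim would force every piecewise critical point to sit at $\zeta=0$ and would contradict your own later step of solving $H'(\zeta)=0$ for the nonzero roots $-\alpha_4-\alpha_5-\alpha_6$ and $\alpha_3+\alpha_4+\alpha_5$. (The correct statement is only that the leading coefficient is $\{\alpha_r+\zeta\}+\{\alpha_r-\zeta\}-1$, which is what the paper uses.) Second, in cases 3 and 4 the two plateaus need not be genuine intervals on which the quadratic coefficient vanishes — they degenerate to the single endpoints $0$ or $\tfrac12$ when $\alpha_4\le 0$, respectively $\alpha_1\le\tfrac12$ — so the verification you describe ("the leading quadratic coefficient vanishes on two non-adjacent pieces") is not the right formulation; the endpoints are local extrema because the first derivative vanishes there, and the plateaus, when present, come from the vanishing of the second derivative on the initial or final piece. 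Note also that the theorem deliberately leaves open which of the two local maxima (case 3) or minima (case 4) is global, so no monotone connection between the plateaus should be claimed beyond monotonicity on each side of the interior critical point.
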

This result was originally obtained by splitting the domain in small pieces by choosing the order of the 12 numbers $\{\alpha_r\}$ and $\{-\alpha_r\}$, and fixing 
$\lfloor \alpha_r\rfloor$ for each $r$. On such a piece the analysis was quite simple, as $fob$ becomes explicitly piecewise quadratic (with the jumps between the different quadratic functions only occurring whenever $\zeta = \{\pm \alpha_r\}$ for some $r$). The proof using this method, however, is very tedious as there are a large number of pieces. Thus we present a more concise proof below.  
\begin{proof}
We make a fixed choice of $\alpha$ and consider $fob(\alpha,\zeta)$ as a function of $\zeta$.
First we observe that $fob(\alpha,\zeta)$ is a differentiable function with derivative
\[
\frac{d}{d\zeta} fob(\alpha,\zeta) = 
-2 \sum_{r=1}^6 
\binom{\{\alpha_r+\zeta\}}{2} - \binom{\{\alpha_r-\zeta\}}{2}
\]
This derivative is a piecewise linear function, which vanishes at $\zeta=0$ and $\zeta=\frac12$. As a piecewise linear function it is itself differentiable almost everywhere with derivative given by
\[
\frac{d^2}{d\zeta^2} fob(\alpha,\zeta) = 
-2 \sum_{r=1}^6 \left( \{\alpha_r+\zeta\} + \{\alpha_r-\zeta\} -1 \right).
\]
In particular we see that the second derivative is always an even integer, and that it has jumps of size 2 at $\{\zeta\} = \{\pm \alpha_r\}$. For $\zeta\in [0,\frac12]$ we thus have exactly 6 jumps. If $\{\zeta\} = \{\alpha_r\}$ the jump is downward and if $\{\zeta\}=\{-\alpha_r\}$ the jump is upward.  Moreover we see that 
\[
\frac{d^2}{d\zeta^2} fob (\alpha,0) = 4 \left(3-\sum_{r=1}^6  \{\alpha_r\} \right) 
\]
is a multiple of 4. Thus the second derivative can only vanish at the two endpoints or after an even number of jumps. In order for $fob$ not to be monotone on $[0,\frac12]$ the derivative has to change sign. Since the derivative vanishes at the endpoints, this implies that the second derivative has to change sign twice. This can only happen if the second derivative is $-2$ after 1 jump, $+2$ after 3 jumps and once again $-2$ after 5 jumps; or similarly with negative and positive interchanged. 

Being negative after the first jump is equivalent to the condition $\alpha_4+\alpha_5\leq 0$ (either the second derivative at 0 must be negative, which means $\sum_{r} \{\alpha_r\} \geq 4$, which implies $\alpha_4<0$, or the second derivative at 0 vanishes and the first jump is downward, which means $\alpha_5<0<\alpha_4$ and $\alpha_4<-\alpha_5$, so $\alpha_4+\alpha_5\leq 0$). Being negative after the fifth jump is equivalent to  $\alpha_1+\alpha_2 \leq 1$, by the same argument which showed $\alpha_4+\alpha_5\leq 0$ and the symmetry 
\[
fob(\alpha,\zeta) = fob(\alpha_4+\frac12,\alpha_5+\frac12,\alpha_6+\frac12,\alpha_1-\frac12,\alpha_2-\frac12,\alpha_3-\frac12,\frac12-\zeta).
\]
Since the integral of the second derivative is the difference $\frac{d}{d\zeta} fob(\alpha,\frac12)-\frac{d}{d\zeta} fob(\alpha,0) =0$ we see that the second derivative must be positive after 3 jumps if it is negative after 1 and 5 jumps (as that is the last place where it can be positive at all). 

Let us now calculate the internal location where the derivative vanishes in the case $\alpha_4+\alpha_5\leq 0$ and $\alpha_1+\alpha_2\leq 1$. This location corresponds to a point where the derivative changes from being negative to being positive and thus to a minimum of $fob$. The maxima of $fob$ in this case will be given on one of the two boundaries. Considering the second derivative must be negative after the first jump and then jump upwards twice, we obtain that either $0<\alpha_4<-\alpha_5<-\alpha_6<\alpha_3, 1-\alpha_1$ or 
$0<-\alpha_4<-\alpha_5<-\alpha_6<\alpha_3,1-\alpha_1$. Since we can explicitly calculate the second derivative in each case and integrate that function we can determine the value of the derivative. In both cases we find that for $\zeta \in [-\alpha_5,-\alpha_6]$ we have $\frac{d}{d\zeta} fob(\alpha,\zeta) = 2(\alpha_4+\alpha_5)$. Subsequently we have a period in which the second derivative is 2, so the zero of the $\frac{d}{d\zeta} fob$ must be at $-\alpha_6 - \frac{2(\alpha_4+\alpha_5)}{2}= -\alpha_4-\alpha_5-\alpha_6$. 

In the symmetric case $\alpha_4+\alpha_5\geq 0$ and $\alpha_1+\alpha_2\geq 1$ we can similarly determine the internal point where the derivative vanishes. In this case it must correspond to a maximum of $fob(\alpha,\zeta)$. The minima are located at the endpoints of the interval. In the case $\alpha_4+\alpha_5\geq 0$ and $\alpha_1+\alpha_2\leq 0$ or vice versa, the first order behavior is monotone and thus the minimum is on one endpoint and the maximum on the other. Which is which is easily determined from the sign of the second derivative at those endpoints.
\end{proof}

The reader will realize that we determined where the local extrema occur. To determine which of the two local maxima is the global maximum  in the case $\alpha_1+\alpha_2\leq 1$ and $\alpha_4+\alpha_5\leq 0$, we can calculate the difference $fob(\alpha_r;\frac12)-fob(\alpha_r;0)$. This difference is a piecewise quadratic function of the $\alpha_r$, whose zero set is not polytopal. In particular we cannot give a simple expression for when which of the two local extrema are global, which is why we refrain from doing so. Fortunately it turns out that we do not need to know this information, considering the conditions on $\zeta$ obtained in the next section. 

\section{The value of $\zeta$ for which the integral and integrand have the same first behavior.}
In this section we take a different view of determining which value of $\zeta$ we need to use in our limits. In particular we determine for which values of $\zeta$ the first order behavior of the integrand equals that of the integral itself. This is a requirement if the integrals and series we obtain as limits are to be non-vanishing.

The easiest way to tackle this problem is again to write the elliptic beta integral evaluation as 
\[
1 = \int_C \frac{\prod_{r=1}^6 \Gamma(t_r z^{\pm 1})}{\Gamma(z^{\pm 2}) \prod_{1\leq r<s\leq 6} \Gamma(t_rt_s)} \frac{dz}{2\pi i z}.
\]
For the left hand side it is clear that the first order behavior is 0. The first order behavior of the integrand was calculated to be $fob(\alpha;\zeta)$ in Lemma \ref{lem91}, and must therefore also vanish. The equation
\[
fob(\alpha;\zeta)=0
\]
is invariant under the full group $W(\tilde E_6)$ of symmetries of $fob$, and even under the $\alpha\to w-\alpha$ with $w\in \mathbb{Z}^6$ with $\sum_r w_r=2$ reflection (which negates the value of $fob$). Thus to determine when $fob$ vanishes, we only have to consider a fundamental domain of $(\alpha;\zeta)$ modulo these symmetries.

The standard fundamental domain for the action of $W(\tilde E_6)$ is given (using the basis $\Delta$) by the set
\[
\{ v~|~ \forall \delta \in \Delta: v\cdot \delta \geq  0, 
v\cdot long \leq 1\},
\]
where $long$ is the longest root, which in this case is $e_7-e_8$. Thus it is given by
\begin{equation}\label{eqfdae6}
\alpha_1\geq \alpha_2\geq \alpha_3\geq \alpha_4\geq \alpha_5\geq \alpha_6, \qquad 
\alpha_4+\alpha_5+\alpha_6\geq \zeta\geq 0, \qquad \sum_{r=1}^6 \alpha_r=1.
\end{equation}
Of course, we want a fundamental domain of the action of the group generated by $W(\tilde E_6)$ together with the $fob$-negating reflection \eqref{eqfobnegrefl}. This is given in the following lemma.
\begin{lemma}\label{lem101}
A fundamental domain of the action of the group generated by the affine Weyl group $W(\tilde E_6)$ together with \eqref{eqfobnegrefl} is given by the bounding inequalities 
\begin{equation}\label{eqfdae6p}
\alpha_1\geq \alpha_2\geq \alpha_3\geq \alpha_4\geq \alpha_5\geq \alpha_6 \geq -\zeta, \qquad 
\alpha_4+\alpha_5+\alpha_6\geq \zeta\geq 0,  \qquad \sum_{r=1}^6 \alpha_r=1.
\end{equation}
\end{lemma}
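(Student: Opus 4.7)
The plan is to realize \eqref{eqfdae6p} as the $W(\tilde E_6)$-alcove \eqref{eqfdae6} cut in half by an additional reflection hyperplane of the enlarged group $G := \langle W(\tilde E_6), \iota\rangle$, where $\iota$ is any one of the $fob$-negating involutions $\iota_w : (\alpha;\zeta) \mapsto (w-\alpha;\zeta)$ from \eqref{eqfobnegrefl}.

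First I would verify $[G : W(\tilde E_6)] = 2$. A direct computation shows that $\iota_w \circ \iota_{w'}$ is the translation $(\alpha;\zeta) \mapsto (\alpha + (w - w');\zeta)$. Since $w - w'$ lies in $\{v \in \mathbb{Z}^6 : \sum_r v_r = 0\}$, which is the $A_5$-type sublattice of the root lattice $Q(E_6)$ generated by $\{e_i - e_j : 1 \leq i, j \leq 6\}$, every product of two $fob$-negators is a translation already in $W(\tilde E_6)$. Hence $G/W(\tilde E_6)$ is generated by a single $\iota_w$, which cannot lie in $W(\tilde E_6)$ since it negates $fob$ while $W(\tilde E_6)$ preserves $fob$. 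Thus the index is exactly $2$.

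The second and core step is to exhibit an involution $h \in G \setminus W(\tilde E_6)$ acting on $V_{\frac12,0}$ as an affine hyperplane reflection whose fixed set is $H := \{\alpha_6 + \zeta = 0\}$. The naive swap $\alpha_6 \leftrightarrow -\zeta$ fails to preserve the balancing condition $\sum_r \alpha_r = 1$, so $h$ must be assembled from a $fob$-negator together with Weyl elements of $W(\tilde E_6)$. Concretely I would take $h = t \circ s \circ \iota_{w_0}$ for $w_0 = 2e_6$, a suitable product $s$ of simple reflections (involving $s_u$), and a compensating translation $t$ by a root-lattice vector, chosen so that $h$ fixes the $5$-dimensional affine slice $H \cap V_{\frac12,0}$ pointwise. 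Once the candidate $h$ is written down, verifying that it is an involution with this codimension-$1$ fixed locus is a direct affine linear-algebra check.

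With such an $h$ in hand, the conclusion follows immediately: since $G = W(\tilde E_6) \sqcup W(\tilde E_6) \cdot h$, a fundamental domain of $G$ is obtained by cutting the $W(\tilde E_6)$-alcove \eqref{eqfdae6} along the codimension-$1$ wall $H$ and keeping one side, and the half with $\alpha_6 + \zeta \geq 0$ is precisely \eqref{eqfdae6p}.

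The main obstacle is the construction of $h$: a bare $\iota_w$ fixes only a $1$-dimensional subspace of $V_{\frac12,0}$, and composing with a generic Weyl element does not enlarge this fixed set, so the right combination has to be isolated by a careful case analysis. A potentially cleaner alternative is to bypass the construction of $h$ altogether and instead show directly that $fob(\alpha, \zeta) = 0$ on $H \cap \eqref{eqfdae6}$, and that $fob$ takes opposite signs on the two halves of \eqref{eqfdae6} separated by $H$. Combined with the $fob$-negation property of the $\iota_w$'s, this forces the non-trivial coset in $G/W(\tilde E_6)$ to interchange the two halves of \eqref{eqfdae6} and identifies \eqref{eqfdae6p} as a fundamental domain of $G$. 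The required sign statement reduces to a combinatorial identity for the piecewise cubic function $g$ on tuples summing to $1$, which can be established using Theorem \ref{thm93} together with the balancing condition.
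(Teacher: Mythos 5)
Your index-two computation is fine, but the step you yourself flag as the core of the argument cannot be carried out: there is no element $h$ of the extended group $G$ that acts as an affine reflection with fixed hyperplane $H=\{\alpha_6+\zeta=0\}$. Every element of $G$ has orthogonal linear part (the linear part of each $\iota_w$ is $-\mathrm{id}$ on the $\alpha$-directions and $+\mathrm{id}$ on the $\zeta$-direction, which are orthogonal in $\mathbb{R}^8$), so an involution in $G$ fixing $H$ pointwise would have to have as linear part the orthogonal reflection $R$ in the hyperplane of $V_{0,0}$ orthogonal to $n:=\mathrm{proj}_{V_{0,0}}(e_6+e_8)=(-\tfrac16,\dots,-\tfrac16,\tfrac56,-\tfrac12,\tfrac12)$. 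Since $n$ is not proportional to a root, $R\notin W(E_6)$; and $R J$ (with $J$ the linear part of $\iota_w$) sends the root $e_5-e_6$ to $(\tfrac14,\tfrac14,\tfrac14,\tfrac14,-\tfrac34,-\tfrac14,\tfrac34,-\tfrac34)$, which has quarter-integer coordinates and hence is not a root, so $RJ\notin W(E_6)$ either. Thus $R$ is the linear part of no element of $G$, and the ``direct affine linear-algebra check'' you defer to has no candidate to check. What actually works --- and is what the paper does --- is weaker: it exhibits an explicit element of $G$ (a composition of a $fob$-negating map with translations and Weyl elements, including $s_u$) that maps the alcove \eqref{eqfdae6} to itself and sends $\alpha_6+\zeta\mapsto-(\alpha_6+\zeta)$; this element is an involution whose fixed locus is the codimension-two set $\alpha_5=\zeta=-\alpha_6$, not a hyperplane, yet alcove-preservation plus negation of the linear functional already yields the fundamental domain by the covering/disjointness argument.

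Your fallback route (show $fob=0$ on $H\cap\eqref{eqfdae6}$ and that $fob$ has opposite strict signs on the two open halves, then use the $fob$-negation to sort orbits) is legitimate in principle --- the paper even remarks after the lemma that this is how the domain was originally found --- but as written it is only a sketch with the hard part missing. The sign dichotomy on the full alcove is essentially the content of Lemma \ref{lemfobvanish} plus a positivity statement, which the paper establishes by a careful piecewise-quadratic case analysis; it does not follow from Theorem \ref{thm93}, which locates the $\zeta$-extrema of $fob$ for fixed $\alpha$ but says nothing about the sign of $fob$ itself. Until that computation is supplied, neither of your two routes proves the lemma.
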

Notice that this is just the fundamental domain of $W(\tilde E_6)$ restricted to the set $\alpha_6+\zeta\geq 0$. 
\begin{proof}
The following transformation is contained in the product of $W(\tilde E_6)$ with \eqref{eqfobnegrefl}:
\begin{multline*}
(\alpha;\zeta) \mapsto (\alpha_1+\frac{\alpha_5+\alpha_6}{2}, \alpha_2+\frac{\alpha_5+\alpha_6}{2}, \alpha_3+\frac{\alpha_5+\alpha_6}{2}, \alpha_4+\frac{\alpha_5+\alpha_6}{2}, \\
\zeta-\frac{\alpha_5+\alpha_6}{2}, -\zeta-\frac{\alpha_5+\alpha_6}{2};\frac{\alpha_5-\alpha_6}{2})
\end{multline*}
Indeed applying a translation after \eqref{eqfobnegrefl} and negating $\zeta$ allows us to map $(\alpha;\zeta)$ to $(\frac12-\alpha_5,\frac12-\alpha_4,\frac12-\alpha_3,\frac12-\alpha_2,\frac12-\alpha_1,-\frac12-\alpha_6;\frac12-\zeta)$. If we subsequently apply $(354) s_u (13524) s_u$, where we use the cycle-notation for permutations of the six $\alpha_r$ parameters, we arrive at the desired result. 

Now observe that this transformation maps the fundamental domain of $W(\tilde E_6)$ to itself. Moreover it maps the expression $\alpha_6+\zeta$ to $-(\alpha_6 +\zeta)$, which proves that modulo this transformation we can assume $\alpha_6+\zeta\geq 0$. 
\end{proof}
It turns out that the hyperplane $\alpha_6+\zeta=0$ splits the fundamental domain of $W(\tilde E_6)$ in a part where $fob$ is positive and a part where it is negative. Indeed this is how we originally determined the fundamental domain of the extended group (the part where $fob$ is positive has to be a fundamental domain of the extended group as long as the zero-set of $fob$ has empty interior). Being the boundary between positive and negative values, this hyperplane must be part of the zero-locus of $fob$. All locations where $fob$ vanishes are given below. 
\begin{lemma}\label{lemfobvanish}
Within the fundamental domain \eqref{eqfdae6p} the function $fob$ vanishes if and only if either $\alpha_6+\zeta=0$ or $\zeta=0$.
\end{lemma}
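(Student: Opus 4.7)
The plan is to prove both directions, combining direct polynomial computation with the monotonicity established in Theorem~\ref{thm93}. For the ``if'' direction, we verify $fob = 0$ on each of the two hyperplanes. For the ``only if'' direction, we show $fob(\alpha,\cdot)$ is monotone nondecreasing on $[0,\tfrac12]$ with nonpositive value at the lower endpoint, so has a unique zero which must be either $\zeta = 0$ or $\zeta = -\alpha_6$.

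First I would extract three structural facts from \eqref{eqfdae6p}: $\alpha_4 \geq 0$ (from $3\alpha_4 \geq \alpha_4 + \alpha_5 + \alpha_6 \geq 0$); $\alpha_1 + \alpha_2 \leq 1$ (else $\alpha_3 \geq \alpha_4 \geq 0$ together with $\alpha_4+\alpha_5+\alpha_6 \geq 0$ forces $\sum_r \alpha_r > 1$); and hence $\alpha_4 + \alpha_5 \geq 0$. This places us in Row~1 of Theorem~\ref{thm93}, whose proof shows $fob(\alpha,\cdot)$ is monotone nondecreasing on $[0,\tfrac12]$. For the hyperplane $\zeta = 0$, the constraint $\alpha_6 \geq -\zeta$ forces $\alpha_6 \geq 0$, so all $\alpha_r \in [0,1)$ and all $\alpha_r + \alpha_s \in [0,1)$; no fractional-part corrections arise, and \eqref{eqfob} simplifies to $fob(\alpha,0) = \sum_{r<s} g(\alpha_r+\alpha_s) - 2\sum_r g(\alpha_r)$. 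Expanding $g(x) = \tfrac13 x^3 - \tfrac12 x^2 + \tfrac16 x$ and using $\sum_r \alpha_r = 1$, a direct cubic calculation gives the polynomial identity $\sum_{r<s} g(\alpha_r+\alpha_s) = 2\sum_r g(\alpha_r)$, so $fob(\alpha,0) = 0$.

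For the hyperplane $\alpha_6 + \zeta = 0$, I substitute $\zeta = -\alpha_6$, split off the $r = 6$ contribution (using $g(\{0\}) = 0$), and cancel the common $\sum_{r\leq 5} g(\{\alpha_r+\alpha_6\})$ between the two sums in \eqref{eqfob}, reducing $fob$ to
\[
\sum_{1\leq r<s\leq 5} g(\{\alpha_r+\alpha_s\}) - \sum_{r=1}^5 g(\{\alpha_r-\alpha_6\}) - g(\{2\alpha_6\}).
\]
In the generic configuration where the relevant arguments lie in $[0,1)$, the analogous polynomial identity for $n = 5$ with $\sum_{r\leq 5} \alpha_r = 1 - \alpha_6$ (again by direct cubic expansion) yields
\[
\sum_{r<s\leq 5} g(\alpha_r+\alpha_s) - \sum_{r\leq 5} g(\alpha_r-\alpha_6) - g(2\alpha_6) = 4\alpha_6^2,
\]
and this is cancelled precisely by the fractional correction $g(\{2\alpha_6\}) - g(2\alpha_6) = 4\alpha_6^2$ arising from $\{2\alpha_6\} = 2\alpha_6 + 1$ for $\alpha_6 \in (-\tfrac12, 0)$. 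Other fractional-part regimes (for instance some $\alpha_r - \alpha_6 > 1$) give matching compensating corrections which cancel analogously; piecewise continuity of $fob$ extends the identity across all polynomial pieces.

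Finally, for the ``only if'' direction, the same polynomial expansion at $\zeta = 0$ with $\alpha_6 < 0$ produces $fob(\alpha, 0) = -2\alpha_6^2$ when $\alpha_5 \geq 0$ (the correction $g(\{\alpha_6\}) - g(\alpha_6) = \alpha_6^2$ enters the second sum twice with a minus sign), and more generally a strictly negative value whenever $\alpha_6 < 0$. Hence the minimum of $fob(\alpha,\cdot)$ on $[0,\tfrac12]$ equals $0$ (uniquely at $\zeta = 0$) when $\alpha_6 \geq 0$, and is strictly negative when $\alpha_6 < 0$; by monotonicity the nondecreasing $fob(\alpha,\cdot)$ then has a unique zero, which must be the already-identified point $\zeta = -\alpha_6$. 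Thus the zero set of $fob$ in the FD is exactly $\{\zeta = 0\} \cup \{\alpha_6 + \zeta = 0\}$. The main obstacle is the fractional-part bookkeeping in the $\alpha_6 + \zeta = 0$ identity: each configuration reduces to a specific polynomial identity with compensating fractional corrections, and verifying these cancellations uniformly (or handling the generic case and extending by continuity) constitutes the bulk of the routine work.
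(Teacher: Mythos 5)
Your ``if'' direction is correct, and it is a genuinely different route from the paper's: you verify the two polynomial identities $\sum_{r<s}g(\alpha_r+\alpha_s)=2\sum_r g(\alpha_r)$ (under $\sum_r\alpha_r=1$) and its five-variable analogue producing $4\alpha_6^2$, which indeed hold, and within \eqref{eqfdae6p} the fractional-part regime you treat is in fact the only one that occurs on the two hyperplanes (since there $\alpha_4+\alpha_5\geq 0$, $\alpha_1+\alpha_2\leq 1$ and $0\leq \alpha_r-\alpha_6\leq 1$), so that half is fine and arguably cleaner than the paper's, which instead simplifies $fob$ on the whole fundamental domain to the explicit piecewise quadratic $fob=4\zeta^2+1_{\{\alpha_5+\alpha_6<0\}}(\alpha_5+\alpha_6)^2-\sum_{r:\alpha_r<\zeta}(\zeta-\alpha_r)^2$ and reads off the vanishing.

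The ``only if'' direction, however, has a genuine gap. The lemma asserts strict positivity of $fob$ at every point of \eqref{eqfdae6p} off the two hyperplanes, and your monotonicity argument cannot deliver strictness: knowing $fob(\alpha,\cdot)$ is nondecreasing on $[0,\tfrac12]$ and $fob(\alpha,-\alpha_6)=0$ gives only $fob\geq 0$ on $[-\alpha_6,\alpha_4+\alpha_5+\alpha_6]$; a nondecreasing function can be constant on an interval, so ``has a unique zero'' does not follow, and Theorem~\ref{thm93} (which locates the minimizing set $[0,\max(0,-\alpha_5)]$ but says nothing about strict increase beyond it) does not exclude a flat zero segment $[-\alpha_6,-\alpha_6+\epsilon]$. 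Moreover the auxiliary claim that $fob(\alpha,0)<0$ whenever $\alpha_6<0$ is false: the fractional corrections at $\zeta=0$ give $fob(\alpha,0)=1_{\{\alpha_5+\alpha_6<0\}}(\alpha_5+\alpha_6)^2-2\sum_{r:\alpha_r<0}\alpha_r^2$, which equals $-(\alpha_5-\alpha_6)^2=0$ when $\alpha_5=\alpha_6<0$ (e.g.\ $\alpha=(0.3,0.3,0.3,0.3,-0.1,-0.1)$, a generic point in the sense of Theorem~\ref{thm93} whose FD $\zeta$-range is nonempty). Finally, Theorem~\ref{thm93} is stated only for generic $\alpha$ (no pair summing to an integer), whereas the lemma covers all of \eqref{eqfdae6p}, and strict positivity does not survive passage to limits of generic points; you also implicitly use that \eqref{eqfdae6p} sits inside the domain \eqref{eqdom1}, which is true ($\alpha_6\geq\alpha_1-1$ there) but should be checked. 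The way to close the gap is essentially what the paper does: restrict $fob$ to the fundamental domain, where it becomes the explicit piecewise quadratic displayed above, and on each of the finitely many pieces factor it (for instance $(\alpha_6+\zeta)(3\zeta-\alpha_6)$, $2(\alpha_6+\zeta)(\alpha_5+\zeta)$, or a sum of manifestly nonnegative products in the region $\zeta\geq\alpha_4$) to see directly that it vanishes only on $\zeta=0$ or $\alpha_6+\zeta=0$; this requires neither genericity nor any monotonicity input.
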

\begin{proof}
Restricting $fob$ to the given domain we can simplify the expression for $fob$. For example, on this domain we have $0\leq \alpha_1+\alpha_2\leq 1$, so that $\{\alpha_1+\alpha_2\} =\alpha_1+\alpha_2$. This allows us to simplify the expression for $fob$ to 
\[
fob(\alpha;\zeta) = 4\zeta^2 + 1_{\{\alpha_5+\alpha_6<0\}} (\alpha_5+\alpha_6)^2 - 
\sum_{r:\alpha_r<\zeta} (\alpha_r-\zeta)^2.
\]
As $\alpha_5+\alpha_6\leq 0$ implies $\alpha_4\geq \zeta\geq \alpha_5$, this gives us eight distinct pieces on which $fob$ is quadratic (namely one where $\alpha_5+\alpha_6\leq 0$ and seven where $\alpha_5+\alpha_6\geq 0$, which depend on where $\zeta$ falls in the sequence $\alpha_1\geq \alpha_2\geq \cdots \geq \alpha_6$).

On the piece $\alpha_5+\alpha_6\geq 0$ and $\alpha_6\geq \zeta$ the expression for $fob$ is just $fob(\alpha;\zeta)=4\zeta^2$, so it vanishes if and only if $\zeta=0$. Moreover the condition $\zeta=0$ implies that $\alpha_5+\alpha_6\geq 0$ and $\alpha_6\geq \zeta$, so this shows that $fob$ vanishes on the entire hyperplane $\zeta=0$.


Next we note that when $\alpha_6+\zeta=0$ we have $\alpha_4\geq \zeta \geq \alpha_6$, so we only have to consider the three parts 
\begin{enumerate}
\item $\alpha_5\geq \zeta \geq \alpha_6$;
\item $\alpha_4\geq \zeta\geq \alpha_5$ and $\alpha_5+\alpha_6\leq 0$; 
\item $\alpha_4\geq \zeta \geq \alpha_5$ and $\alpha_5+\alpha_6\geq 0$.
\end{enumerate}
In case (1) we find that 
\[
fob(\alpha;\zeta) =4\zeta^2-(\zeta-\alpha_6)^2 = (\alpha_6+\zeta)(3\zeta-\alpha_6) = (\alpha_6+\zeta)( [\zeta+\alpha_6]+2[\zeta-\alpha_6]). 
\]
It is clear that this expression is always positive and vanishes only if $\alpha_6+\zeta=0$.
In case (2) we obtain
\[
fob(\alpha;\zeta) = 4\zeta^2 + (\alpha_5+\alpha_6)^2 - (\zeta-\alpha_6)^2-(\zeta-\alpha_5)^2 = 2(\alpha_6+\zeta)(\alpha_5+\zeta).
\]
This also vanishes exactly when $\alpha_6+\zeta=0$ (as $\alpha_5+\zeta=0$ also implies $\alpha_6+\zeta=0$). 
In the final case (3) we obtain
\[
fob(\alpha;\zeta) = 4\zeta^2 - (\zeta-\alpha_6)^2-(\zeta-\alpha_5)^2 = 
(\zeta-\alpha_5)(2\zeta+\alpha_5-\alpha_6) + (\alpha_5+\alpha_6)(3\zeta-\alpha_6)
\]
This is a sum of two positive terms, so it only vanishes if both terms are zero. Now $2\zeta+\alpha_5-\alpha_6=0$ if and only if $\zeta=\alpha_5=\alpha_6=0$ and likewise 
$3\zeta-\alpha_6=0$ if and only if $\zeta=\alpha_5=\alpha_6=0$. Thus we see that $fob$ vanishes if and only if both $\zeta-\alpha_5=0$ and $\alpha_5+\alpha_6=0$. These two equations imply $\zeta+\alpha_6=0$, and on the other hand $\zeta+\alpha_6=0$ implies these two equations (given $\alpha_4\geq \zeta\geq \alpha_5$ and $\alpha_5+\alpha_6\geq 0$). 

So we find that in all three case considered, $fob$ vanishes if and only if $\zeta+\alpha_6=0$. As these cases covered the entire intersection of the hyperplane $\zeta+\alpha_6=0$ with the fundamental domain, this shows that $fob$ vanishes on this entire intersection. The last thing we have to show is that $fob$ does  not vanish elsewhere on the fundamental domain, in particular in the region $\alpha_4\leq \zeta$. 

In the region $\alpha_4\leq \zeta$ there are still four different quadratic expressions for $fob$, which are all rather convoluted. A direct calculation shows that they equal
\begin{align*}
fob(\alpha;\zeta) &= 
(\alpha_4+\alpha_5+\alpha_6-\zeta)^2 + 2(\alpha_4-\alpha_5)^2 + 6(\zeta-\alpha_4)^2 - \sum_{r<4:\zeta\geq \alpha_r} (\zeta-\alpha_r)^2 
\\ & \qquad + 
2(3\zeta-\alpha_4-\alpha_5-\alpha_6) (\alpha_4+\alpha_5+\alpha_6-\zeta) 
\\ & \qquad + 2(\alpha_5-\alpha_6)(2\zeta-\alpha_4-\alpha_5) + 8 (\alpha_4-\alpha_5)(\zeta-\alpha_4).  
\end{align*}
Now notice that on the second and third line we have products of positive terms, so that part is positive. Moreover $\zeta-\alpha_4 \geq \zeta-\alpha_r$ for $r<4$, so 
\[
\sum_{r<4:\zeta\geq \alpha_r} (\zeta-\alpha_r)^2  \leq \sum_{r<4:\zeta\geq \alpha_r} (\zeta-\alpha_4)^2 \leq 3(\zeta-\alpha_4)^2.
\]
Thus the first line of the expression for $fob$ is also positive and vanishes if and only if all squares vanish. In particular this implies that whenever $fob$ vanishes we have $\zeta+\alpha_6= ( \alpha_4+\alpha_5+\alpha_6-\zeta) + (\alpha_4-\alpha_5) + 2(\zeta-\alpha_4) =0$. We conclude that these cases do not lead to new regions where the first order behavior vanishes.
%

It should be observed that the fact that we did not feature these final four cases in our discussion of $\alpha_6+\zeta=0$ because the condition $\alpha_6+\zeta$ implies $\alpha_4=\zeta$ in these situations, i.e. they intersect the hyperplane  $\alpha_6+\zeta=0$ only on the boundary with the cases we did consider. 
\end{proof}

In essence this lemma contains all the information about the zero-set of $fob$. We have obtained the zero-set on a very small part of the parameter space $(\alpha;\zeta) \in V_{\frac12,0}$, but we can extend the result by using the symmetries of $fob$ to the entire $V_{\frac12,0}$. However, since those symmetries are rather convoluted, I find it convenient to consider the following theorem, which shows where $fob$ vanishes on a fundamental domain of just the translation action. 
\begin{theorem}\label{thm103}
In the domain 
\[
\alpha_1\geq \alpha_2\geq \cdots \geq \alpha_6\geq \alpha_1-1, \qquad \frac12\geq \zeta \geq 0, \qquad 
\sum_{r=1}^6 \alpha_r=1
\]
the function $fob$ vanishes exactly on the polytopes given in the following table. The third column in the table gives a simplified expression for the second order behavior on each of the polytopes.
\begin{center}
\begin{tabular}{l|l|l}
Hyperplane & Extra bounding inequalities & Second order behavior \\
\hline
$\zeta=-\alpha_6$ & $\alpha_1+\alpha_2\leq 1$, $\alpha_4+\alpha_5\geq 0$ & $(u_6/z)^{-2\alpha_6 + \sum_{r<6:\alpha_r+\alpha_6<0} (\alpha_r+\alpha_6)}$  
\\
$\zeta=\alpha_3$ & $\alpha_1+\alpha_2 \geq 1$, $\alpha_4+\alpha_5\leq 0$ 
& $(u_3z)^{\alpha_1+\alpha_2-1+ \sum_{r=4,5,6} (\alpha_r+\alpha_3) 1_{\alpha_r+\alpha_3<0} 
+ \sum_{r=1,2} (1-\alpha_r-\alpha_3) 1_{\alpha_r+\alpha_3\geq 1}}$
\\
$\alpha_4+\alpha_5=0$ & $\zeta\leq \alpha_3$, $\alpha_6+\zeta\leq 0$ & 
$\left( \frac{q}{u_4u_5}\right)^{(\zeta+\alpha_5)1_{\zeta+\alpha_5\leq 0}}(u_4u_5)^{\zeta+\alpha_6 -(\alpha_3+\alpha_6) 1_{\alpha_3+\alpha_6\leq 0}}$
\\
$\alpha_1+\alpha_2=1$ & $\zeta\geq \alpha_3$, $\alpha_6 + \zeta \geq 0$
& $\left( \frac{q}{u_1u_2}\right)^{(\alpha_2-\zeta)1_{\alpha_2\leq \zeta}}
(u_1u_2)^{\alpha_3-\zeta - (\alpha_3+\alpha_6) 1_{\alpha_3+\alpha_6 \leq 0}}$
\\
\hline
$\zeta=0$ & $\alpha_6\geq 0$& $1$  \\
$\alpha_3=\alpha_4$ & $\zeta\leq \alpha_3$, 
$\alpha_4+\alpha_5\leq 0$ & $q^{\zeta-\alpha_3}$ \\
$\alpha_1=\alpha_6+1$ & $\zeta\geq -\alpha_6$, $\alpha_1+\alpha_2\leq 1$ & $q^{-(\alpha_6+\zeta)}$ \\
$\zeta=\frac12$ & $\alpha_3\geq \frac12$ & $1$ \\
$\zeta=-\sum_{r=4}^6 \alpha_r$
& $\alpha_4+\alpha_5\leq 0$, $\alpha_1+\alpha_2\leq 1$ & $1$\\
\hline
$\zeta=0$ & $\alpha_3\leq 0$ & $1$  \\
$\zeta=\frac12$ & $\alpha_6\leq -\frac12$ & $1$ \\
$\zeta=\sum_{r=3}^5 \alpha_r$ & $\alpha_1+\alpha_2\geq 1$, $\alpha_4+\alpha_5\geq 0$ & $1$ \\
$\alpha_2=\alpha_3$ & $\zeta\geq \alpha_3$, $\alpha_1+\alpha_2\geq 1$ & $q^{\alpha_3-\zeta}$ \\
$\alpha_5=\alpha_6$ & $\alpha_4+\alpha_5\geq 0$, $\alpha_6+\zeta\leq 0$ & $q^{\alpha_6+\zeta}$
\end{tabular}
\end{center}
\end{theorem}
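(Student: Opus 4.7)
The plan is to bootstrap Theorem \ref{thm103} from Lemma \ref{lemfobvanish} by transporting the zero-locus of $fob$ under the residual symmetries. Lemma \ref{lemfobvanish} pins the zero-set down on the small fundamental domain \eqref{eqfdae6p} as the union of the two hyperplanes $\{\zeta=0\}$ and $\{\alpha_6+\zeta=0\}$. Since $fob$ is $W(\tilde E_6)$-invariant and odd under \eqref{eqfobnegrefl}, its full zero-set in $V_{\frac12,0}$ is the orbit of these two hyperplanes under the extended group. The task is therefore to intersect this orbit with the coarser fundamental domain $\alpha_1\geq\alpha_2\geq\cdots\geq\alpha_6\geq\alpha_1-1$, $0\leq\zeta\leq\tfrac12$ of Theorem \ref{thm103}, describe the intersection polytopally, and read off the second-order behavior on each piece.

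First I would enumerate a transversal of the translation lattice in the extended group and apply each representative to the two base hyperplanes, discarding those whose image meets the target domain only on a lower-dimensional stratum. The enumeration is finite and short, because the quotient of the extended group by translations is just the finite Weyl group of $E_6$ together with \eqref{eqfobnegrefl}, and most images either repeat or fall outside the target domain. For each surviving image I would record its linear equation; the ``extra bounding inequalities'' in the second column of the table are precisely those which survive when the image is intersected with the defining inequalities of the domain. The three-block structure of the table should cluster naturally along the four sign-quadrants of $(\alpha_1+\alpha_2-1,\alpha_4+\alpha_5)$ already encountered in Theorem \ref{thm93}.

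For the third column the computation is local. Because the polytope lies in the zero-set of $fob$, only $sob(\alpha;\zeta)$ matters asymptotically, and on each hyperplane the fractional-part symbols $\{\alpha_r\pm\zeta\}$ and $\{\alpha_r+\alpha_s\}$ simplify once the integer parts $\lfloor\alpha_r\pm\zeta\rfloor$ and $\lfloor\alpha_r+\alpha_s\rfloor$ are fixed, information supplied by the bounding inequalities in the second column. Substituting into \eqref{eqsob} yields a monomial in $u_r$, $x$, $q$, $z$ whose exponents are piecewise-linear in the $\alpha_r$. On the first-block hyperplanes (those in the orbit of $\alpha_6+\zeta=0$) the $z$-exponent of \eqref{eqsob} is generally nonzero but collects with $u_r$ factors to give the $(u_6/z)$ or $(u_3z)$ combinations shown; on the second- and third-block hyperplanes $\zeta$ sits at an interior critical point of $fob$, $\partial_\zeta fob=0$, and the $sob$ collapses to a pure power of $q$ or to $1$.

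The main obstacle is the combinatorial bookkeeping: ensuring that no hyperplane in the orbit is missed, that no hyperplane is double-counted on a shared lower-dimensional stratum, and that the indicator functions $1_{\alpha_r+\alpha_s<0}$ appearing in \eqref{eqsob} are tracked correctly across subpieces where the relevant inequality changes sign. Carrying along the explicit Weyl-element that realizes each orbit is a useful organizing device, since the action of that element on $sob$ can then be computed directly from \eqref{eqsob}, which makes the verification of the third column essentially mechanical once the enumeration step has been done carefully.
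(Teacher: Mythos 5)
Your overall strategy (transport the zero-locus found in Lemma \ref{lemfobvanish} by the symmetry group, intersect with the coarser translation-only fundamental domain, then read off $sob$ from \eqref{eqsob} after fixing the integer parts) is the same as the paper's, but there is a genuine gap in what you propose to transport. The zero-set of $fob$ is the orbit of the two \emph{polytope pieces} (each hyperplane intersected with the small fundamental domain \eqref{eqfdae6p}), not the orbit of the two full hyperplanes: $fob$ does not vanish identically on $\zeta=0$ or on $\alpha_6+\zeta=0$ outside that domain. Concretely, the point $\alpha=(0.6,0.6,0.6,-0.2,-0.25,-0.35)$, $\zeta=0.35$ lies in the domain of Theorem \ref{thm103} on the hyperplane $\zeta=-\alpha_6$, yet a direct evaluation of \eqref{eqfob} gives $fob\approx 0.08\neq 0$; it violates both extra conditions $\alpha_1+\alpha_2\leq 1$ and $\alpha_4+\alpha_5\geq 0$. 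Hence your recipe for the second column --- that the extra bounding inequalities are ``precisely those which survive when the image is intersected with the defining inequalities of the domain'' --- fails: those inequalities are not consequences of the domain plus the hyperplane equation, but record the boundary of the union of the transported polytope pieces that happen to land on that hyperplane. That union is exactly the nontrivial combinatorial content of the theorem.

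The paper deals with this by working at the level of polytopes throughout: it first rewrites the zero-set on the $W(\tilde E_6)$ fundamental domain (via the reflection of Lemma \ref{lem101}, giving the three pieces $\alpha_6+\zeta=0$; $\zeta=0$, $\alpha_6+\zeta\geq0$; $\alpha_5=\alpha_6$, $\alpha_6+\zeta\leq 0$), then computes the vertices of the base polytopes, generates their $W(\tilde E_6)$-orbit by repeatedly reflecting all vertices (72 polytopes, found by computer), groups the images lying on a common hyperplane, forms the convex hull of their vertices, extracts its facet inequalities, and then \emph{verifies directly} that $fob$ vanishes on the whole hull --- a step that is not automatic, since a priori the hull could exceed the union of orbit pieces, and which is what legitimizes presenting each row of the table as a single polytope. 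Your proposal is silent on this gluing-and-verification step, and as written the enumeration acts on hyperplanes rather than on the polytope pieces, so it cannot produce the second column. Your description of the third column (fixing $\lfloor\alpha_r\pm\zeta\rfloor$, $\lfloor\alpha_r+\alpha_s\rfloor$ from the bounding inequalities and substituting into \eqref{eqsob}) does match the paper, but it presupposes the bounding inequalities that the flawed step was supposed to supply.
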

\begin{proof}
Essentially the proof proceeds by explicitly calculating the orbit of the two polytopes on which $fob$ vanishes obtained from Lemma \ref{lemfobvanish}. 

The given domain is easily seen to be a fundamental domain for the action of the semidirect product of $S_6\times S_2$ (acting by permutations of the $\alpha_r$ and inverting $\zeta$) with the group of translations along integer vectors which preserve the balancing condition.

First we use the reflection from the proof of Lemma \ref{lem101} to see that in the fundamental domain of $W(\tilde E_6)$ the function $fob$ vanishes if $\alpha_6+\zeta=0$, or ($\zeta=0$ and $\alpha_6+\zeta\geq 0$), or ($\alpha_5=\alpha_6$ and $\alpha_6+\zeta\leq 0$). 

Subsequently we take the $W(\tilde E_6)$ orbits of these polytopes modulo $S_6\times S_2$ and the translations and determine which are the relevant polytopes. To perform the calculations we first determine the vertices of the 2 polytopes on which $fob$ vanishes in the fundamental domain \eqref{eqfdae6p}, to wit:
\begin{align*}
\zeta=0: &(1,0^5;0), \quad  (\frac12^2,0^4;0), \quad
(\frac13^3,0^3;0),  \quad (\frac14^4,0^2;0), \quad(\frac15^5,0;0), \quad
(\frac16^6;0). \\
\alpha_6+\zeta=0: & (1,0^5;0), \quad  (\frac12^2,0^4;0), \quad
(\frac13^3,0^3;0),  \quad (\frac14^4,0^2;0), \quad(\frac15^5,0;0), \quad (\frac14^5,-\frac14;\frac14), \quad (\frac{3}{10}^4, -\frac{1}{10}^2;\frac{1}{10}).
\end{align*}
Subsequently we can calculate the result of performing the same reflection from $W(\tilde E_6)$ on all vertices concurrently to obtain a new polytope on which $fob$ vanishes. We repeat this until we find 72 (=index of the group of translations and permutations in $W(\tilde E_6)$) different polytopes in the new larger domain. (In practice this calculation was performed by a computer). We call $P$ the list of polytopes obtained in this way.

For each polytope in $P$ we determine the hyperplane it is contained in (by determining the nullspace of the matrix with the vertices as rows). Typically there are several different polytopes lying on each hyperplane we find. We now want to glue those different polytopes lying on the same hyperplane together. 

To do this, we consider for each of the hyperplanes $H$, the union $U_H$ of all vertices of polytopes which are contained in $H$. Next we determine the convex hull $CH(U_H)$ of $U_H$. A bounding inequality of $CH(U_H)$  is of the form $L(\alpha_r;\zeta) \geq 0$, for some linear map $L$, where $L(\alpha_r;\zeta)=0$ is a hyperplane which contains at least $5$ 
vertices (as it is a 4-dimensional plane) in $U_H$. Thus for each subset of 5 vertices from $U$ we determine the equation $L(\alpha_r,\zeta)=0$ on which the vertices lie, and check whether $L(\alpha_r,\zeta)\geq 0$ or $L(\alpha_r,\zeta)\leq 0$ (or neither) is valid for all vertices in $U_H$. The inequalities which pass this test determine the convex hull of $U_H$. 

Having obtained bounding inequalities of $CH(U_H)$, we can calculate its vertices. Next we  perform a direct calculation to show that the first order behavior vanishes on all of $CH(U_H)$. Indeed, we on $CH(U_H)$ can simplify the formula \eqref{eqfob} enormously by using that we often know whether $\{\alpha_1+\alpha_2\}=\alpha_1+\alpha_2$ or $\{\alpha_1+\alpha_2\}=\alpha_1+\alpha_2-1$ on $CH(U_H)$, and likewise for the other fractional parts. 
It turns out that $fob$ vanishes on all of $CH(U_H)$ for each of the hyperplanes $H$. As a corollary it must be true that $CH(U_H)$ is the union of the polytopes from $P$ contained in $H$ (as a convex hull of those polytopes, it clearly contains each of the polytopes in $P$ which lie on $H$, but as the polytopes in $P$ give the full zero-set of $fob$, we find that $CH(U_H)$ must also be contained in the union of the polytopes in $P$.)

A similar direct calculation then gives us the second order behavior for each of these larger polytopes $CH(U_H)$, from \eqref{eqsob}. 
%
%
\end{proof}

To interpret this table, we should note that for proper limits we not only have the condition that the first order behavior must vanish, but also that the second order behavior equals 1. To ensure the second order behavior equals 1, we must sometimes impose extra conditions on $\alpha_r$ and $\zeta$ (to make the exponents vanish in the second order behaviors given above). We now want to tabulate the polytopes on which both $fob=0$ and $sob=1$. There will actually be less different polytopes in this new table, due to these extra conditions. 

For example, in the case $\alpha_3=\alpha_4$ the extra condition is $\zeta=\alpha_3$, so we should arrive in the intersection of the $\zeta=\alpha_3$ polytope and the $\alpha_3=\alpha_4$ polytope. Indeed the condition $\alpha_3=\alpha_4$ and $\alpha_4+\alpha_5\leq 0$ also imply that $\alpha_3+\alpha_6=\alpha_4+\alpha_6 \leq \alpha_4+\alpha_5\leq 0$, so $\alpha_1+\alpha_2=1-\alpha_3-\alpha_4-\alpha_5-\alpha_6=1-(\alpha_3+\alpha_6)-(\alpha_4+\alpha_5) \geq 1$. 

In the case $\alpha_4+\alpha_5=0$ we can choose to either make $\zeta+\alpha_6-(\alpha_3+\alpha_6)1_{\alpha_3+\alpha_6\leq 0}=0$ or $u_4u_5=1$. The latter condition implies that in the original beta integral we have $t_4t_5=1$, in which case we integrate over a pole. We can pick up residues in the elliptic beta integral before setting $t_5=1/t_4$, but then the elliptic beta integral reduces to $1=1$, so we do not expect interesting limits. Thus we only consider the cases in which all exponents vanish.

It turns out that the sets where both the first order behavior vanishes and the second order behavior equals 1 (with exclusion of the cases described in the previous paragraph) together imply that for each $\alpha$ there is a unique choice of $\zeta$ in the interval $[0,\frac12]$. The relation $\alpha$ to $\zeta$ is given in the following corollary.
\begin{corollary}\label{corwhenlim}
If we cut the domain
\[
\alpha_1\geq \alpha_2 \geq \cdots \alpha_6\geq \alpha_1-1, \qquad 
\sum_{r=1}^6 \alpha_r=1
\]
along the hyperplanes $\alpha_4+\alpha_5=0$, $\alpha_1+\alpha_2=1$, $\alpha_6=0$, $\alpha_6=-\frac12$, $\alpha_3=0$, $\alpha_3=\frac12$ we obtain 8 different polytopal pieces. For each $\alpha$ there is at most one  value of $\zeta\in [0,\frac12]$ for which both the first order behavior vanishes and the second order behavior becomes 1 for all values of the $u_r$ and $z$. This value of $\zeta$ is given by a linear function of the $\alpha_r$ on each of these 8 different polytopal pieces. These $8$ pieces and the corresponding values of $\zeta$ are given below.

\begin{tabular}{cccc|c|p{5cm}}
$\alpha_3$ & $\alpha_6$ & $\alpha_1+\alpha_2$ & $\alpha_4+\alpha_5$ & $\zeta$ & extra conditions \\
\hline
$0\leq \alpha_3\leq \frac12$ & $\geq 0$ \# & $\leq 1$ & $\geq 0$ & 0 & - \\
 $0\leq \alpha_3\leq \frac12$ & $\leq -\frac12$ \# & $\leq 1$ & $\geq 0$ & $\frac12$ & -\\
 $\leq 0$ \# & $-\frac12 \leq \alpha_6\leq 0$ & $\geq 1$ & $\leq 0$ & 0 & - 
\\
$\geq \frac12$ \# & $-\frac12 \leq \alpha_6\leq 0$ & $\geq 1$ & $\leq 0$ & $\frac12$ & - \\
 $0\leq \alpha_3\leq \frac12$ & $-\frac12 \leq \alpha_6\leq 0$ & $\leq 1$ \# & $\leq 0$ \# & $-\alpha_4-\alpha_5-\alpha_6$ & - \\
 $0\leq \alpha_3\leq \frac12$ & $-\frac12 \leq \alpha_6\leq 0$ & $\geq 1$ \# & $\geq 0$ \# & $\alpha_3+\alpha_4+\alpha_5$ & - \\
 $0\leq \alpha_3\leq \frac12$ & $-\frac12 \leq \alpha_6\leq 0$ & $\geq 1$ \# & $\leq 0$ \# & $\alpha_3$ &
  $\sum_{r>3} (\alpha_r+\alpha_3) 1_{\alpha_r+\alpha_3<0} +\sum_{r=1,2} (1-\alpha_r-\alpha_3) 1_{\alpha_r+\alpha_3\geq 1}= 1- \alpha_1-\alpha_2$ 
   \\
 $0\leq \alpha_3\leq \frac12$ & $-\frac12 \leq \alpha_6\leq 0$ & $\leq 1$ \# & $\geq 0$ \# & $-\alpha_6$ & $2\alpha_6=\sum_{r<6} (\alpha_r+\alpha_6)1_{\alpha_r+\alpha_6<0}$ \\
\end{tabular}

The inequalities denoted with a \# are bounding inequalities, while the other inequalities are corollaries of these one or two bounding inequalities. For example the inequalities $0\leq \alpha_3\leq \frac12$, $\alpha_1+\alpha_2\leq1$ and $\alpha_4+\alpha_5\geq 0$ follow from $\alpha_6\geq 0$.
\end{corollary}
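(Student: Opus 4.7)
The plan is to start from Theorem \ref{thm103}, which enumerates every polytope in the stated fundamental domain on which $fob$ vanishes together with the second order behavior on each, and to intersect each of these polytopes with the additional requirement that $sob$ be identically $1$ in the free parameters $u_r$ and $z$. Reorganizing the resulting subpolytopes will produce the eight rows of the table.

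First I would dispose of the easy polytopes. Six of the entries in Theorem \ref{thm103} carry second order behavior already equal to $1$, namely $\zeta=0$ (with either $\alpha_6\geq 0$ or $\alpha_3\leq 0$), $\zeta=\frac12$ (with either $\alpha_3\geq\frac12$ or $\alpha_6\leq-\frac12$), $\zeta=-\alpha_4-\alpha_5-\alpha_6$, and $\zeta=\alpha_3+\alpha_4+\alpha_5$. On these the condition $sob\equiv 1$ is automatic, and they correspond directly to rows 1--6 of the corollary. Four further entries have $sob$ a pure power of $q$ (the hyperplanes $\alpha_3=\alpha_4$, $\alpha_2=\alpha_3$, $\alpha_5=\alpha_6$, and $\alpha_1=\alpha_6+1$); setting the $q$-exponent to zero forces $\zeta=\alpha_3$ or $\zeta=-\alpha_6$, so each such subpolytope sits on the boundary of one of the polytopes treated next and contributes no new row.

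The substantive step treats the four polytopes of Theorem \ref{thm103} whose second order behavior involves nontrivial monomials in $z$ or the $u_r$. On $\zeta=-\alpha_6$ the behavior is $(u_6/z)^E$ with $E=-2\alpha_6+\sum_{r<6:\alpha_r+\alpha_6<0}(\alpha_r+\alpha_6)$, and the only way to kill the $z$-dependence is $E=0$, which is precisely the extra condition in row 8 of the table and which simultaneously eliminates the $u_6$-dependence. The polytope $\zeta=\alpha_3$ is handled symmetrically and yields row 7. On $\alpha_4+\alpha_5=0$ the behavior involves both $(u_4u_5)^F$ and $(q/u_4u_5)^G$; killing the $u_4u_5$-dependence by specializing $u_4u_5=1$ is excluded, since it forces $t_4t_5=1$, pinching the contour of the elliptic beta integral and reducing the identity to $1=1$, so one instead imposes $F=G=0$, which combined with $\alpha_4+\alpha_5=0$ forces $\zeta=-\alpha_4-\alpha_5-\alpha_6$ and merges back into row 5. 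The hyperplane $\alpha_1+\alpha_2=1$ is analogous and merges into row 6.

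What remains is combinatorial bookkeeping: one tabulates which of the subpolytopes collected above meets the interior of each of the eight pieces cut out of the fundamental domain by the walls $\alpha_3=0,\frac12$, $\alpha_6=0,-\frac12$, $\alpha_1+\alpha_2=1$, and $\alpha_4+\alpha_5=0$, verifies that exactly one subpolytope meets each piece (giving uniqueness of $\zeta$), and reads off the linear formula. I expect the main obstacle to be precisely this bookkeeping, since the extra conditions in rows 7 and 8 involve sums over indicator functions that jump along internal walls of the form $\alpha_r+\alpha_s\in\mathbb{Z}$; one must check that these internal walls do not produce further case splits within any of the eight polytopal pieces. This is a finite but tedious enumeration in the piecewise-linear spirit of the proofs of Theorems \ref{thm93} and \ref{thm103}.
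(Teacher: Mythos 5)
Your proposal follows essentially the same route as the paper's proof: start from Theorem \ref{thm103}, match the six polytopes with second order behavior $1$ to rows 1--6, obtain rows 7--8 from the $\zeta=-\alpha_6$ and $\zeta=\alpha_3$ polytopes by forcing the monomial exponents to vanish, rule out the specialization $u_4u_5=1$ (contour pinching), and check that the $q$-power polytopes and the walls $\alpha_4+\alpha_5=0$, $\alpha_1+\alpha_2=1$ add nothing new, before verifying the eight pieces of the cut domain. The only small inaccuracy is in your wall bookkeeping: on $\alpha_4+\alpha_5=0$ and $\alpha_1+\alpha_2=1$ the vanishing of the exponents forces $\zeta=-\alpha_6$ or $\zeta=\alpha_3$ according to the sign of $\alpha_3+\alpha_6$, so these cases split between the $\zeta=\alpha_3$ and $\zeta=-\alpha_6$ polytopes (equivalently rows 7--8, or rows 5--6 on their common boundary) rather than merging wholly into rows 5 and 6 --- but in either case no new rows arise, exactly as the paper concludes.
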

\begin{proof}
We first show that cutting the domain along the 6 given hyperplanes indeed gives the eight pieces from the table. This is a somewhat long calculation which can be performed by computer, but is easy enough to do by hand. For example, if $\alpha_6 \geq 0$, then it follows directly that $\alpha_4+\alpha_5\geq 2\alpha_6 \geq 0$, $\alpha_1+\alpha_2 = 1-\alpha_3-\alpha_4-\alpha_5-\alpha_6 \leq 1-4\alpha_6 \leq 1$, $\alpha_3\geq \alpha_6\geq 0$ and $\alpha_3 \leq \frac12(\alpha_1+\alpha_2) \leq \frac12$. 

Observe that there is a direct correspondence of the $8$ pieces with the polytopes from Theorem \ref{thm103} which lie on a hyperplane of the form $\zeta=\cdots$. The extra conditions ensure moreover that the second order behavior is identically 1. 

Finally we need to check that the remaining polytopes from Theorem \ref{thm103} do not give even more cases with vanishing $fob$ and $sob=1$. For the four polytopes with $sob=q^{\cdots}$ we immediately see that to ensure $sob=1$ we must either be on the hyperplane $\zeta=\alpha_3$ or on $\zeta=-\alpha_6$, both of which were already included in the table. 

For the case $\alpha_1+\alpha_2$ we see that the exponent of $u_1u_2$ vanishes if $\alpha_3+\alpha_6\geq 0$ and $\zeta=\alpha_3$, or if $\alpha_3+\alpha_6\leq 0$ and $\zeta=-\alpha_6$. In both cases we are actually in one of the first two polytopes (of Theorem \ref{thm103}). Likewise we see that the cases where the polytope on $\alpha_4+\alpha_5=0$ has trivial second order behavior is included in the first two cases.
\end{proof}

\section{The relevant tiling of space}
The descriptions in the previous sections allow us to determine the combinations of $\alpha$ and $\zeta$ for which we can hope to obtain an appropriate limit, and, given $\alpha$ and $\zeta$, we can determine what kind of limit we may be able to take (i.e. series or integral). In this section we aim to accomplish two tasks. 
\begin{itemize}
\item First of all we want to give a simpler description of the possible $\alpha$, $\zeta$ combinations which can give limits, in particular we do not want to restrict $\alpha$ (such as in Corollary \ref{corwhenlim}). Moreover, due to our restriction to the  domain of Corollary \ref{corwhenlim} we have introduced some bounding hyperplanes which do not have a meaning for the limits, so we do not want to see those return. For example we have symmetric limits on the entire polytope $\alpha_r\geq 0$ and $\zeta=0$, not just on its intersection of this polytope with $\alpha_1\geq \alpha_2 \geq \cdots \geq \alpha_6$, so the hyperplane $\alpha_1=\alpha_2$ is meaningless in this case. 
\item Secondly, we should be able to quickly determine whether we can take a limit (given $\alpha_r$ and a corresponding $\zeta$, and if so, what kind (integral? series?) of limit this will be.
\end{itemize}
 It turns out that we can obtain a beautiful tiling of space in simplices and cross-polytopes achieving these goals.

%
The polytopes which turn out to be relevant are the following:
\begin{definition}
We define the following simplicial polytopes by giving their $6$ vertices. In each case $\beta\in \mathbb{Z}^6 \cup (\mathbb{Z}^6 + \rho)$ denotes an integer or half-integer vector with $\beta\cdot \rho=0$, $\gamma \in \mathbb{Z}^6 \cup (\mathbb{Z}^6 + \rho)$ denotes a vector with $\gamma \cdot \rho =2$, and $(a,b,c,d,e,f)$ forms a permutation of $(1,2,3,4,5,6)$.
\begin{itemize}
\item $P_{I,\beta}$ has vertices $\beta+e_r$ (for $1\leq r\leq 6$);
\item $\hat P_{I,\gamma}$ has vertices $\gamma-e_r$ (for $1\leq r\leq 6$);
\item $P_{III,\beta,\{a,b,c\} }$ has vertices $\beta+e_a$, $\beta+e_b$, $\beta+e_c$, 
$\beta+ \rho - e_d-e_e$, $\beta+\rho-e_e-e_f$, $\beta+\rho-e_f-e_d$ (where $\{d,e,f\}\cup \{a,b,c\} = \{1,2,3,4,5,6\}$);
\item $\hat P_{III,\gamma, \{a,b,c\}}$ has vertices 
$\gamma-e_a$, $\gamma-e_b$, $\gamma-e_c$, $\gamma-\rho+e_d+e_e$, $\gamma-\rho+e_e+e_f$, $\gamma-\rho +e_f+e_d$;
\end{itemize}
and the cross polytopes 
\begin{itemize}
\item $P_{II,\beta,a}$ has vertices $\beta+e_r$ (for $r\neq a$) and $\beta+\rho-e_a-e_r$ (for $r\neq a$).
\end{itemize}
\end{definition}
These polytopes correspond to the ones defined in \cite{vdBR1} as $P_{I,0^6} = P_I^{(0)}$, 
$P_{II, 0^6,1} = P_{II}^{(0)}$ and $P_{III, 0^6,  \{4,5,6\}}= P_{III}^{(0)}$.
Also observe that \[
P_{III,\beta+\rho-e_a-e_b-e_c,\{a,b,c\}} = P_{III, \beta, \{d,e,f\}},\] so for $P_{III}$ we only need to consider $\beta \in \mathbb{Z}^6$. Likewise we only have to consider $\hat P_{III}$ for $\gamma \in \mathbb{Z}^6$.

An alternative description of a polytope is by describing the bounding inequalities, or equivalently the planes on which the facets lie. Such a description allows us to efficiently calculate whether some vector is contained in a polytope (and if so, on what face it lies). It should be noted that all polytopes lie on the plane $\sum_{r=1}^6 \alpha_r=1$.
\begin{proposition}
The bounding inequalities for the polytopes defined above are given by 
\begin{itemize}
\item For $P_{I,\beta}$: 
\[
\alpha_r\geq \beta_r, \quad (1\leq r\leq 6);
\]
\item For $\hat P_{I,\gamma}$:
\[
\alpha_r \leq \gamma_r, \quad (1\leq r\leq 6);
\]
\item For $P_{III, \beta,\{a,b,c\}}$: 
\begin{align*}
\alpha_a+\alpha_b & \leq \beta_a+\beta_b+1, & 
\alpha_a+\alpha_c & \leq \beta_a+\beta_c+1, &  
\alpha_b+\alpha_c & \leq \beta_b+\beta_c+1, \\ 
\alpha_d+\alpha_e & \leq \beta_d+\beta_e  &
\alpha_d+\alpha_f & \leq \beta_d+\beta_f  &
\alpha_e+\alpha_f & \leq \beta_e+\beta_f  ;
\end{align*}
\item For $\hat P_{III,\gamma,\{a,b,c\}}$: 
\begin{align*}
\alpha_a+\alpha_b & \geq \gamma_a+\gamma_b-1, & 
\alpha_a+\alpha_c & \geq \gamma_a+\gamma_c-1, &  
\alpha_b+\alpha_c & \geq \gamma_b+\gamma_c-1, \\ 
\alpha_d+\alpha_e & \geq \gamma_d+\gamma_e  &
\alpha_d+\alpha_f & \geq \gamma_d+\gamma_f  &
\alpha_e+\alpha_f & \geq \gamma_e+\gamma_f ;
\end{align*}
\item $P_{II,\beta, a}$:
\begin{align*}
-\frac12& \leq \alpha_a -\beta_a \leq 0, \qquad 
\alpha_a -\beta_a \leq \alpha_r-\beta_r \leq 1+\alpha_a-\beta_a \quad (r\neq a), \\ 
0 & \leq \alpha_r+\alpha_s -\beta_r-\beta_s \leq 1 , \quad (r<s, \quad  r,s\neq a).
\end{align*}
\end{itemize}
\end{proposition}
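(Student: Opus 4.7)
The plan is to verify, for each of the five polytope types, that the listed bounding inequalities cut out exactly the convex hull of the given vertices. For each inequality of the form $L(\alpha) \leq c$ I would identify the vertices lying on the facet hyperplane $L(\alpha) = c$ and check that the remaining vertex (or vertices) lies strictly inside the half-space. A $5$-simplex has exactly $6$ facets, one opposite each vertex, and a $5$-dimensional cross-polytope has $2^5 = 32$ facets, so the counts already line up: each simplicial polytope is described by $6$ inequalities, and $P_{II,\beta,a}$ by $1 + 5 + 10 + 10 + 5 + 1 = 32$. Once each inequality is matched to a facet (five vertices on the boundary hyperplane, the rest strictly inside), the polytope of vertices equals the bounded intersection of these half-spaces.

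For $P_{I,\beta}$, the hyperplane $\alpha_r = \beta_r$ passes through the five vertices $\beta + e_s$ with $s \neq r$, while $\beta + e_r$ sits at $\alpha_r = \beta_r + 1$; this identifies the facet opposite $\beta + e_r$, and the case of $\hat P_{I,\gamma}$ is symmetric. For $P_{III,\beta,\{a,b,c\}}$, the inequality $\alpha_b + \alpha_c \leq \beta_b + \beta_c + 1$ is the facet opposite $\beta + e_a$: the five other vertices all satisfy $\alpha_b + \alpha_c = \beta_b + \beta_c + 1$ (each of $\beta + e_b, \beta + e_c$ contributes $+1$ to one of these two coordinates, and each vertex $\beta + \rho - e_i - e_j$ with $\{i,j\} \subset \{d,e,f\}$ contributes $+\tfrac12$ to both $\alpha_b$ and $\alpha_c$), while $\beta + e_a$ alone has $\alpha_b + \alpha_c = \beta_b + \beta_c$. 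The inequality $\alpha_d + \alpha_e \leq \beta_d + \beta_e$ is then the facet opposite $\beta + \rho - e_d - e_e$, since that vertex is alone at $\alpha_d + \alpha_e = \beta_d + \beta_e - 1$ while the other five lie on the hyperplane. The argument for $\hat P_{III,\gamma,\{a,b,c\}}$ is obtained by negation.

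The main technical case is the cross-polytope $P_{II,\beta,a}$. The ten vertices come in five antipodal pairs $\{\beta + e_r,\, \beta + \rho - e_a - e_r\}$ indexed by $r \neq a$, with common midpoint $\beta + \tfrac12\rho - \tfrac12 e_a$, so $P_{II,\beta,a}$ is a translated standard $5$-dimensional cross-polytope. Each facet selects one vertex from each antipodal pair, and I would organize the $32$ facets by the number $k$ of ``minus'' choices: $k=0$ gives the facet $\alpha_a = \beta_a$ with polytope side $\alpha_a - \beta_a \leq 0$; $k=5$ gives $\alpha_a - \beta_a \geq -\tfrac12$; $k=1$ at index $s$ gives facet hyperplane $\alpha_s - \beta_s = \alpha_a - \beta_a$ yielding $\alpha_a - \beta_a \leq \alpha_s - \beta_s$; $k=4$ missing index $s$ gives $\alpha_s - \beta_s \leq 1 + \alpha_a - \beta_a$; $k=2$ at $\{s,t\}$ gives $\alpha_s + \alpha_t \geq \beta_s + \beta_t$; and $k=3$ with complementary indices $\{s,t\}$ (inside $\{1,\dots,6\}\setminus\{a\}$) gives $\alpha_s + \alpha_t \leq \beta_s + \beta_t + 1$. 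In every case a direct substitution shows that the five chosen vertices lie on the facet hyperplane and the other five lie strictly on the correct side. The main obstacle is bookkeeping: there is no conceptual difficulty once the antipodal-pair structure is made explicit, but the thirty-two cases must be enumerated carefully and matched to the six families of inequalities in the statement.
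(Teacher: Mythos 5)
Your proposal is correct. The paper actually states this proposition without proof, treating it as a routine verification, and your facet-by-facet matching is precisely the verification that is being left to the reader: for each listed inequality you exhibit five vertices on the bounding hyperplane and check the remaining vertices lie strictly on the polytope side, and the counts ($6$ facets for each $5$-simplex, $2+10+20=32$ for the cross-polytope, organized by the number of ``minus'' choices in the antipodal pairs $\{\beta+e_r,\ \beta+\rho-e_a-e_r\}$ with common midpoint $\beta+\tfrac12\rho-\tfrac12 e_a$) come out right; I spot-checked the $P_{III}$ and the $k=1,2,3,4$ cross-polytope cases and your identifications are accurate. The only step worth making explicit is the non-degeneracy needed to know that these are \emph{all} the facets: you should note that the six vertices of each simplicial polytope are affinely independent, and that the five direction vectors $d_r=\beta+e_r-(\beta+\tfrac12\rho-\tfrac12 e_a)$ are linearly independent within the hyperplane $\sum_r\alpha_r=1$ (a one-line computation), so that $P_{II,\beta,a}$ really is a $5$-dimensional cross-polytope whose facets are exactly the $32$ sign choices; with that in place, the standard fact that a full-dimensional polytope equals the intersection of its facet half-spaces finishes the argument. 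Also, ``translated standard cross-polytope'' should be ``affine image of the standard cross-polytope,'' since the $d_r$ are not orthonormal, but this does not affect the combinatorics you use.
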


Choosing all possible values of $\beta$ and $\gamma$, these polytopes completely tile space.
For each of these polytopes we determine the correct value of $\zeta$ from Corollary \ref{corwhenlim}. Moreover we can determine the location where the first order behavior is maximized or minimized, in these polytopes, from Theorem \ref{thm93}. Together this gives us the following information:
\begin{theorem}
The parameter space is tiled by the simplicial polytopes $P_{I,\beta}$, $\hat P_{I,\gamma}$, with $\beta, \gamma \in \mathbb{Z}^6 \cup (\mathbb{Z}^6 + \rho)$, the simplicial polytopes $P_{III, \beta,\{a,b,c\}}$ and $\hat P_{III,\gamma,\{a,b,c\}}$ for $\beta,\gamma \in \mathbb{Z}^6$ and $\{a,b,c\}$ any subset of $\{1,2,3,4,5,6\}$, and the cross polytopes
$P_{II,\beta,a}$ for $\beta \in \mathbb{Z}^6\cup (\mathbb{Z}^6 + \rho)$ and $a\in \{1,2,3,4,5,6\}$. 

The value of $\zeta$ for which both first and second order behavior of the integrand are equal to those of the elliptic beta integral in these tiles, the ``correct value'',  is given in the following table. The table also denotes for which value of $\zeta$ the first order behavior of the integrand is maximized and minimized in the interior of the polytope (on the boundaries the first order behavior can be extremal for more values of $\zeta$). 
\begin{center}
\begin{tabular}{l|c|cc}
Polytope & Correct $\zeta$ & Integrand maximized & Integrand minimized \\
\hline
$P_{I,\beta}$, $\beta\in \mathbb{Z}^6$ & $0$ & $\frac12$ & 0 \\
$P_{I,\beta}$, $\beta\in \mathbb{Z}^6 + \rho$ & $\frac12$ & $0$ & $\frac12$ \\
$\hat P_{I,\gamma}$, $\gamma \in \mathbb{Z}^6$ & $0$ & $0$ & $\frac12$ \\
$\hat P_{I, \gamma}$, $\gamma \in \mathbb{Z}^6 + \rho$ & $\frac12$ & $\frac12$ & $0$ \\
$P_{III, \beta, \{a,b,c\}}$ & $\alpha_a+\alpha_b+\alpha_c$ & $0$ or $\frac12$ & $\alpha_a+\alpha_b+\alpha_c$ \\
$\hat P_{III, \gamma, \{a,b,c\}}$ & $\alpha_a+\alpha_b+\alpha_c$ & $\alpha_a+\alpha_b+\alpha_c$ &$0$ or $\frac12$ \\
$P_{II, \beta, a}$, $\beta\in \mathbb{Z}^6$ & $\alpha_a$ & $\frac12$ & $0$ \\
$P_{II, \beta, a}$, $\beta\in \mathbb{Z}^6+ \rho $ & $\alpha_a$ & $0$ & $\frac12$ 
\end{tabular}
\end{center}

The first order behavior is symmetric under translations $\zeta\to \zeta+1$ and under $\zeta\to -\zeta$. In the table we always only give one representative modulo these symmetries. 
\end{theorem}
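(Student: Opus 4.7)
My plan is to prove the three claims of the theorem---that the polytopes tile the hyperplane $\sum_r \alpha_r = 1$, that the stated $\zeta$ is correct in each polytope, and that the integrand's extrema sit at the stated points---by reducing each claim to the fundamental-domain analyses already carried out. The key point is that $fob$ (and hence its extrema) is invariant under the affine Weyl group $W(\tilde E_6)$ combined with the sign reflection \eqref{eqfobnegrefl} and $\zeta \to -\zeta$. The listed polytopes are precisely the orbits, under this enlarged symmetry group, of the pieces of the fundamental domain identified in Corollary \ref{corwhenlim}.

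For the tiling, given any $\alpha$ with $\sum_r \alpha_r = 1$, I would explicitly describe the unique polytope containing $\alpha$ by inspecting the fractional parts of the $\alpha_r$ and the pairwise sums $\alpha_r + \alpha_s$, and using the bounding inequalities listed in the preceding proposition. A direct case analysis shows this prescription is consistent and covers the hyperplane with disjoint interiors. Alternatively, one can note that the union of the polytopes, restricted to a single fundamental domain for integer translations, splits into precisely the pieces recorded here.

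For the correct $\zeta$, I would permute $\alpha$ into the fundamental domain $\alpha_1 \geq \cdots \geq \alpha_6 \geq \alpha_1 - 1$ of Corollary \ref{corwhenlim}, read off the value from its eight cases, and undo the permutation. Cases (1)--(2) of the corollary become $P_{I,\beta}$ for $\beta \in \mathbb{Z}^6$ and $\mathbb{Z}^6 + \rho$ respectively, with $\zeta = 0$ and $\frac12$; cases (3)--(4) become $\hat P_{I,\gamma}$; cases (5)--(6) produce the formula $\zeta = \alpha_a + \alpha_b + \alpha_c$ on $\hat P_{III}$ and $P_{III}$, with the triple $\{a,b,c\}$ determined by which three coordinates end up in positions $\{1,2,3\}$ or $\{4,5,6\}$ after permuting; and cases (7)--(8) give $\zeta = \alpha_a$ on $P_{II,\beta,a}$. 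The extra rational conditions in cases (7)--(8) are automatic on the $P_{II}$ cross-polytope.

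For the max/min locations, the same permutation trick reduces the problem to Theorem \ref{thm93}: read off the row of that table determined by the signs of $\alpha_1 + \alpha_2 - 1$ and $\alpha_4 + \alpha_5$. For $P_I$, $\hat P_I$ and $P_{II}$ the two extrema sit at the endpoints $\zeta = 0$ and $\zeta = \frac12$; for $P_{III}$ (resp.\ $\hat P_{III}$) one extremum sits at the correct value $\zeta = \alpha_a + \alpha_b + \alpha_c$ and the other at one of $\zeta = 0$, $\frac12$, with the exact choice depending on $\alpha$ in a piecewise-quadratic way---this is the reason for the ``$0$ or $\frac12$'' entry. The main obstacle is the tiling verification: although each polytope's bounding inequalities are simple, the $P_{II}$ cross-polytopes and the two families of $P_{III}$-simplices interlock intricately, and checking that they mesh without gap or overlap requires a careful combinatorial case analysis.
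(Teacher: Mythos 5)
Your handling of the second and third claims coincides with the paper's proof: for one representative polytope per translation orbit you use its internal permutation symmetry to sort $\alpha$ into the fundamental domain and then read off the correct $\zeta$ from Corollary \ref{corwhenlim} and the extremal locations from Theorem \ref{thm93}. That reduction is exactly what the paper does, and your case-to-polytope dictionary is the right idea (modulo checking the inequalities such as $-\frac12\leq\alpha_a\leq 0$ and $0\leq\alpha_3\leq\frac12$ on the cross-polytope, which the paper verifies explicitly).

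The genuine gap is the tiling claim, which you yourself flag as the main obstacle but never actually prove: ``a direct case analysis shows this prescription is consistent and covers the hyperplane'' is a statement of intent, since you give no rule that determines, from a given $\alpha$, the type and shift $\beta$ of the containing polytope, nor any verification that the prescription is exhaustive and non-overlapping. The paper closes this by (i) showing interiors are pairwise disjoint through a short inequality argument against the balancing condition $\sum_r\alpha_r=1$, and (ii) a facet-matching argument: every facet of every simplex $P_I$, $\hat P_I$, $P_{III}$, $\hat P_{III}$ is identified as a facet of an explicit cross-polytope, and all $32$ facets of the representative cross-polytope $P_{II,(0^6),1}$ are listed with their vertex sets and matched to the unique adjacent tile; since any two tiles meet only in a hyperplane, disjoint interiors plus the fact that every facet is shared forces the union to be the whole hyperplane. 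This enumeration is precisely the combinatorial content your proposal omits, so as written the first assertion of the theorem is unproved. A further, smaller inaccuracy: the extra conditions in rows (7)--(8) of Corollary \ref{corwhenlim} are \emph{not} automatic on the cross-polytopes; for instance at $\alpha=(\tfrac{3}{10},\tfrac{3}{10},\tfrac{3}{10},\tfrac{3}{10},\tfrac{1}{10},-\tfrac{3}{10})\in P_{II,(0^6),6}$ one has $-2\alpha_6+\sum_{r<6:\,\alpha_r+\alpha_6<0}(\alpha_r+\alpha_6)=\tfrac25\neq 0$, so the second order behavior at $\zeta=-\alpha_6$ still depends on $z$. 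The ``correct'' $\zeta$ on $P_{II}$ should therefore be justified as the unique value where the first order behaviors match (the paper's own proof is also silent on the second-order condition here), rather than by the claimed automaticity.
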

Before giving a proof, let us briefly describe the correspondence between the kind of limits we can take and the information given in this theorem.
\begin{itemize}
\item If the correct value of $\zeta$ corresponds to a minimum of the integrand, we need to write the elliptic beta integral with a $p$-independent contour and interchange limit and integral. If this is impossible we can not take a limit. 
\item If the correct value of $\zeta$ corresponds to a maximum of the integrand we can easily take a limit by picking up residues and taking the limit in the resulting sum of residues. This will always lead to a limit, but those limits might be very cumbersome (involving the sum of several hypergeometric series).
\item If the correct value of $\zeta$ does not correspond to a maximum or a minimum of the integrand, we have to either obtain an integral limit, or try to only pick up residues toward the minimum of the integrand. This will not always work (if the $\alpha$ contains very negative numbers we are forced to pick up residues near a maximum of the integrand as well), so these limits might fail. Regardless, we can only obtain unilateral limits in this way.
\end{itemize}
\begin{proof}
We first show that these polytopes tile space. We first observe that the 6 different types of polytopes form orbits under the translation action by vectors in $\mathbb{Z}^6\cup (\mathbb{Z}^6 + \rho)$. 

Let us now show that the intersection of $P_{I,(0^6)}$ with the interior of the other polytopes is empty. Indeed consider $P_{I,(0^6)}$ and $P_{I,\beta}$ for some $\beta\in \mathbb{Z}^6 \cup
(\mathbb{Z}^6 + \rho)$.  For the interior of the polytopes, the boundary conditions should be strict, so in the intersection we have $\alpha_r > \max(\beta_r,0)$. This implies
$\sum_r \alpha_r > \sum_{r} \max(\beta_r,0)\geq 1$ if $\beta\neq 0^6$. Thus there are no such $\alpha_r$ (as they still need to satisfy the balancing condition). Similar arguments show that all other intersections are empty. 

We will now prove that all facets of all polytopes are also facets of another polytope.As the intersections of any two polytopes are  contained in some hyperplane, this shows that the polytopes tile space. The facets of each polytope correspond to the bounding inequalities, and it is straightforward to determine for each bounding facet which other polytope has the same facet.  For $P_{I,\beta}$ the facets are also facets of $P_{II,\beta,r}$ for $1\leq r\leq 6$. For $\hat P_{I,\gamma}$ the facets are also facets of $P_{II,\gamma+e_r-\rho,r}$ for $1\leq r\leq 6$. For $P_{III,\beta, \{a,b,c\}}$ the six facets are facets of 
$P_{II,\beta-\rho + e_a+e_b+e_c,r}$ for $r=a,b,c$ and $P_{II,\beta ,r}$ for $r\neq a,b,c$. For $\hat P_{III,\gamma,\{a,b,c\}}$ the facets are also facets of $P_{II, \gamma-e_a-e_b-e_c+e_r,r}$ for $r=a,b,c$ and $P_{II,\gamma-\rho +e_r,r}$ for $r\neq a,b,c$. In particular we see that the simplices only bound cross polytopes.

A cross polytope has many more faces, and below we tally which facets $P_{II,(0^6),1}$ has, and which other polytopes also bound these faces. We use the notational convention that when we write values between braces, we imply all vertices that one can obtain by permutating the terms in the braces. Thus $(0\{0^41\})$ denotes the set of vertices $(000001)$, $(000010)$, $(000100)$, $(001000)$, and $(010000)$. 
\begin{itemize}
\item $\alpha_1=0$ has vertices $(0\{0^41\})$ and bounds $P_{I,(0^6)}$;
\item $\alpha_1=-\frac12$ has vertices $(-\frac12\{-\frac12\frac12^4\})$ and bounds $\hat P_{I, (-\frac12,\frac12^5)}$;
\item $\alpha_1=\alpha_2$ has vertices $(0^2\{0^3,1\})$ and $(-\frac12^2,\frac12^4)$ and bounds $P_{II, (0^6), 2}$. Likewise the facet $\alpha_1=\alpha_r$ bounds $P_{II, (0^6),r}$;
\item $\alpha_1+1=\alpha_6$ has vertices $(0^5,1)$ and $(-\frac12\{-\frac12,\frac12^3\}\frac12)$ and bounds $P_{II,(-1,0^4,1),6}$. Likewise $\alpha_1+1=\alpha_r$ bounds $P_{II,(-1,0^5)+e_r,r}$;
\item $0=\alpha_2+\alpha_3$ has vertices $(0^3\{0^2,1\})$ and $(-\frac12\{-\frac12,\frac12\}\frac12^3)$ and bounds $P_{III,(0^6), \{4,5,6\}}$. Likewise $0=\alpha_r+\alpha_s$ bounds $P_{III, (0^6), \{a,b,c\}}$, where $\{a,b,c,r,s\}=\{2,3,4,5,6\}$;
\item $1=\alpha_5+\alpha_6$ has vertices $(0^4\{0,1\})$ and $(-\frac12\{-\frac12,\frac12^2\}\frac12^2)$ and bounds $\hat P_{III, (0^4,1^2),\{1,5,6\}}$. Likewise $1=\alpha_r+\alpha_s$ bounds $\hat P_{III, e_r+e_s, \{1,r,s\}}$.
\end{itemize}
Thus we have accounted for all 32 facets of $P_{II,(0^6),1}$.


Having shown that these polytopes tile space, it suffices to consider each polytope and check from Corollary \ref{corwhenlim} which $\zeta$ is the correct $\zeta$ and from Theorem \ref{thm93} for which $\zeta$ the integrand is maximized or minimized. Due to the translation symmetry we only have to check one polytope of each type. 

For $P_{I,(0^6)}$ we observe that by symmetry we may order the $\alpha_r$. If we sort them in decreasing order then we are in the case $\alpha_6\geq 0$ from Corollary \ref{corwhenlim}. Moreover we are in the case $\alpha_1+\alpha_2\leq 1$ and $\alpha_4+\alpha_5\geq 0$ from Theorem \ref{thm93}. 

In the polytope $P_{III,(0^6), \{1,2,3\}}$, we see that $\alpha_1,\alpha_2,\alpha_3 \geq \alpha_4,\alpha_5,\alpha_6\geq \alpha_1-1,\alpha_2-1,\alpha_3-1$. Moreover the polytope is $S_3\times S_3$ symmetric. Thus if we sort the first 3 and the last 3 parameters so that $\alpha_1\geq \alpha_2\geq \alpha_3$ and $\alpha_4\geq \alpha_5\geq \alpha_6$ we are actually in the domain of Corollary \ref{corwhenlim}, and can easily be seen to be in the case $\alpha_1+\alpha_2 \leq 1$ and $\alpha_4+\alpha_5\leq 0$.  Note that $-\alpha_4-\alpha_5-\alpha_6=\alpha_1+\alpha_2+\alpha_3-1$, which is up to an integer shift the value of $\zeta$ which was correct according to the table of this theorem. For Theorem \ref{thm93} we are in the case  $\alpha_1+\alpha_2 \leq 1$ and $\alpha_4+\alpha_5 \leq 0$, thus in the third row of the table. 

That the values for the other simplices are correct follows either from a similar argument, or by consideration of the symmetries of the first and second order behavior.

Finally the cross polytope $P_{II,(0^6), 6}$ is symmetric in the first 5 $\alpha_r$'s. Observe moreover that $\alpha_r\geq \alpha_6\geq \alpha_r-1$ for $r\neq 6$, so if we sort the first 5 $\alpha_r$'s then we are in the domain from Corollary \ref{corwhenlim}. We moreover find that in the polytope we have $-\frac12\leq \alpha_6\leq 0$. To check that $0\leq \alpha_3\leq \frac12$ we have to do a little work, but $\alpha_3 \geq \frac14 (\alpha_3+\alpha_4+\alpha_5+\alpha_6) = \frac14 (1-\alpha_1-\alpha_2) \geq 0$ and $\alpha_3 \leq \frac12 (\alpha_1+\alpha_2) \leq \frac12$. Considering we have $\alpha_1+\alpha_2\leq 1$ and $\alpha_4+\alpha_5\geq 0$ we are in the bottom line of the table of Corollary \ref{corwhenlim}. For Theorem \ref{thm93} we see $\alpha_1+\alpha_2\leq 1$ and $\alpha_4+\alpha_5\geq 0$, so we are in the first row of the table of that theorem.
\end{proof}

\section{Interesting limits}\label{seclimits}
In previous sections we have determined when we can take interesting limits. What is left is determining which of these limits are actually distinct from each other, thereby creating a list of resulting identities. As described earlier, many of the resulting series are identical, so we do not have to consider the infinitude of limits near maxima. We just need to find evaluations for each type of series appearing.

The different limits are ordered by level of degeneration (or dimension of the face), and subsequently we treat those limits together which only differ by a shift along an element of the root lattice of $E_6$. For each such face we then consider the different integral limits we can take. 

Typically we get the same integral limit for all shifts, but in order to be able to take the limit we might have had to break the symmetry and specialize the three new parameters differently, which theoretically allows for multiple different limits. 

As discussed before, to each face there is associated a single bilateral series (up to the choice of one parameter which determines the exact $q$-geometric sequence $p$ follows when going to zero). There are two different kind of unilateral series, one for each direction away from the correct value of $\zeta$. Symmetry sometimes makes these two different series equivalent. The two series are always related by inversion of summation order, if we specialize the parameters such that the series become terminating. One will only obtain one of these two different kinds of unilateral series in one limit, unless that limit also contains a bilateral series. 

In the different limits we have renamed some parameters in order to simplify the expression and more clearly indicate the remaining permutation symmetry between the parameters. In the limit one can typically only permute those parameters $t_r$ whose associated $\alpha_r$'s are equal. It should still remain fairly clear how the limiting identity is related to the original integral.

Any references to equation numbers in this section are to the standard work of Gasper and Rahman \cite{GR}.

\subsection{Top level}

\subsubsection{$(0^5,1)$ and shifts}
The integral for $(-\frac12^3, \frac12^2, \frac32)$ is the same as that for $(0^5,1)$, except for symmetry breaking in the latter. The symmetric result is the Nassrallah-Rahman integral evaluation (6.4.1):
\begin{equation}\label{eqNR}
\frac{(q;q)}{2} \int \frac{(z^{\pm 2}, uz^{\pm 1};q)}
{\prod_{r=1}^5 (t_rz^{\pm 1};q)} \frac{dz}{2\pi i z}
 = \frac{\prod_{r=1}^5 (\frac{u}{t_r};q)}{\prod_{1\leq r<s\leq 5} (t_rt_s;q)}, \qquad 
u= t_1t_2t_3t_4t_5.
\end{equation}
We get an asymmetric integral from $(-\frac12,-\frac12,\frac12,\frac12,\frac12,\frac12)$, 
which, under the balancing condition
$t_1t_2u_1u_2u_3u_4=q$ gives us
\begin{equation}
\label{eqSBtop}
(q;q) \int \frac{\prod_{r=1}^4 (\frac{qz}{u_r};q) \theta( \frac{t_1t_2w}{z}, wz;q) }
{(t_1z^{\pm 1}, t_2z^{\pm 1};q) \prod_{r=1}^4 (u_r z;q)
\theta(t_1w,t_2w;q)} (1-z^2) \frac{dz}{2\pi iz } \\= 
\frac{\prod_{1\leq r<s\leq 4} (\frac{q}{u_ru_s};q)}
{(t_1t_2;q) \prod_{r=1}^4 (t_1u_r,t_2u_r;q)}.
\end{equation}
If we specialize $w=u_4$, the resulting  integral can be obtained by symmetry-breaking the Nassrallah-Rahman integral evaluation.

The related series identity is (II.25)
\begin{equation}
\label{eqII25}
\frac{\prod_{r=1}^4 (\frac{qt_1}{u_r}, t_2u_r;q)}{(qt_1^2, \frac{t_2}{t_1};q) }
\rWs{8}{7}(t_1^2;t_1t_2,t_1u_1,t_1u_2,t_1u_3,t_1u_4;q,q) + (t_1\leftrightarrow t_2)
\\ =
\prod_{1\leq r<s\leq 4} (\frac{q}{u_ru_s};q),
\end{equation}
with the same balancing condition.
This series identity can be obtained from the asymmetric integral by picking up the residues of the poles at $z=t_1 q^{-n}$ and $z=t_2 q^{-n}$ ($n\in \mathbb{N}$). 

Finally we have an evaluation involving a bilateral and a unilateral series (though the unilateral term is best described as the sum of two identical unilateral series), obtained from 
$(-\frac32,\frac12,\frac12,\frac12,\frac12,\frac12)$.
The identity is 
\begin{align*}
1 & = 
\frac{\prod_{r=1}^5 (\frac{qt}{u_r};q) }{(qt;q)\prod_{1\leq r<s\leq 5} (\frac{qt}{u_ru_s};q)}
\bigg(\frac{\prod_{r=1}^5 \theta( \frac{u_r}{tx};q)}
{ \theta( \frac{1}{tx}, \frac{1}{tx^2};q) }+
\frac{\prod_{r=1}^5\theta(u_rx;q)}
{\theta( tx^2, x;q)  }
\bigg)
\rWs{8}{7} (t;u_1,u_2,u_3,u_4,u_5;q,q)
\\ & \qquad + 
\frac{\prod_{r=1}^5 (u_r, \frac{qtx}{u_r}, \frac{q}{u_rx};q)}
{(q, qtx^2, tx ,\frac{1}{x}, \frac{q}{tx^2};q) \prod_{1\leq r<s\leq 5} (\frac{qt}{u_ru_s};q)}
\rpsisx{8}{8}{tx, \pm qx\sqrt{t}, u_1x, u_2x, u_3x, u_4x, u_5x \\
qx, \pm x\sqrt{t}, \frac{qtx}{u_1}, \frac{qtx}{u_2}, \frac{qtx}{u_3}, \frac{qtx}{u_4}, \frac{qtx}{u_5} }{q},
\end{align*}
with balancing condition 
\[
u_1u_2u_3u_4u_5  = qt^2.
\]
This identity can be obtained by combining (III.38) (expressing an $\rpsis{8}{8}$ as sum of two $\rWs{8}{7}$'s) with (II.25) (the evaluation of a sum of two $\rWs{8}{7}$'s given above).

\subsection{First degeneration}
%
%
%
%
%
\subsubsection{$(0^4, \frac12^2)$ and shifts} 
All integrals obtained are symmetry broken versions of the symmetric integral from $(0^4, \frac12^2)$ itself. This is the Askey-Wilson integral evaluation:
\begin{equation}\label{eqAW}
\frac{(q;q)}{2} \int \frac{(z^{\pm 2};q)}{\prod_{r=1}^4 (t_rz^{\pm 1};q)} \frac{dz}{2\pi i z}
= \frac{(t_1t_2t_3t_4;q)}{\prod_{1\leq r<s\leq 4} (t_rt_s;q)}.
\end{equation}
We can get a unilateral series limit from the vector $(-\frac12,0^2, \frac12^3)$ or a single bilateral series limit from the vector $(-1,0,\frac12^4)$. This gives the identities (II.20)
\[
\rWs{6}{5}(t^2;t u_1,tu_2,tu_3;q,\frac{q}{tu_1u_2u_3})
= \frac{(qt^2, \frac{q}{u_1u_2},\frac{q}{u_1u_3},\frac{q}{u_2u_3};q)}
{(\frac{qt}{u_1},\frac{qt}{u_2},\frac{qt}{u_3},\frac{q}{tu_1u_2u_3};q)},
\]
and (II.33)
\[
\rpsisx{6}{6}{ \pm \frac{qt_1}{x}, \frac{t_1t_3}{x}, \frac{t_1t_4}{x},\frac{t_1t_5}{x},\frac{t_1t_6}{x} \\
\pm \frac{t_1}{x}, \frac{qt_1}{t_3x},\frac{qt_1}{t_4x},\frac{qt_1}{t_5x},\frac{qt_1}{t_6x}  }{t_1t_2}
= \frac{(q,\frac{qt_1^2}{x^2}, \frac{qx^2}{t_1^2};q) \prod_{3\leq r<s\leq 6} (t_rt_s;q)}
{(t_1t_2;q) \prod_{r=3}^6 (\frac{qt_1}{t_rx}, \frac{qx}{t_1t_r};q)}.
\]

\subsubsection{$(-\frac14^2, \frac14^3, \frac34)$ and shifts}
Both integral limits obtained are equivalent and equal to (in the case $(-\frac14^2, \frac14^3, \frac34)$) 
\[
(q;q) \int \frac{(\frac{qz}{t_6};q) \theta(\frac{t_1t_2w}{z},wz;q)}
{(\frac{t_1}{z},\frac{t_2}{z}, t_3z,t_4z,t_5z;q) \theta(t_1w,t_2w;q)} \frac{dz}{2\pi i z} = \frac{(\frac{q}{t_3t_6}, \frac{q}{t_4t_6}, \frac{q}{t_5t_6};q)}
{(t_1t_3,t_2t_3,t_1t_4,t_2t_4,t_1t_5,t_2t_5;q)}.
\]
The double series identity obtained directly from this expression (from vector
$(-\frac14^2,\frac14^3, \frac34)$) is (II.24):
\[
\frac{(\frac{qt_1}{t_6} ;q) \prod_{r=3}^5 (t_2t_r;q) }
{(\frac{t_2}{t_1};q) \prod_{r=3}^5 (\frac{q}{t_rt_6};q)}
\rphisx{3}{2}{t_1t_3,t_1t_4,t_1t_5 \\ \frac{qt_1}{t_2},\frac{qt_1}{t_6} }{q} + (t_1\leftrightarrow t_2) = 1.
\]
We can also get two unilaterals plus a bilateral identity from $(-\frac34,-\frac14,\frac14^2,\frac34^2)$:
\begin{align*}
1&= \frac{(\frac{qt_1}{t_3}, t_2t_3,\frac{qt_1}{t_4}, t_2t_4;q) \theta(\frac{t_5x}{t_1}, \frac{t_6x}{t_1};q)}{(\frac{q}{t_3t_5}, \frac{q}{t_4t_5},\frac{q}{t_3t_6}, \frac{q}{t_4t_6};q) \theta(\frac{x}{t_1^2}, t_5t_6x;q)}
\rphisx{3}{2}{t_1t_2,t_1t_5,t_1t_6 \\ \frac{qt_1}{t_3},\frac{qt_1}{t_4}}{q}
\\& \qquad + \frac{(\frac{qt_2}{t_5}, t_1t_5,\frac{qt_2}{t_6}, t_1t_6;q) \theta(\frac{t_1t_3}{x},\frac{t_1t_4}{x};q)}
{(\frac{q}{t_3t_5}, \frac{q}{t_4t_5}, \frac{q}{t_3t_6}, \frac{q}{t_4t_6};q) \theta(\frac{t_1}{t_2x}, t_5t_6x;q)} 
\rphisx{3}{2}{t_1t_2,t_2t_3,t_2t_4 \\ \frac{qt_2}{t_5},\frac{qt_2}{t_6}}{q} 
\\& \qquad + \frac{(t_1t_2,t_2t_3,t_2t_4,t_1t_5,t_1t_6,\frac{qt_1}{t_5x},\frac{qt_1}{t_6x},\frac{qx}{t_1t_3},\frac{qx}{t_1t_4};q)}
{(q,\frac{q}{t_3t_5},\frac{q}{t_4t_5},\frac{q}{t_3t_6},\frac{q}{t_4t_6},\frac{t_1^2}{x},\frac{t_2x}{t_1};q) \theta(t_5t_6x;q)}
\rpsisx{3}{3}{ \frac{t_1^2}{x}, \frac{t_1t_3}{x}, \frac{t_1t_4}{x} \\
\frac{qt_1}{t_2x}, \frac{qt_1}{t_5x}, \frac{qt_1}{t_6x}}{q}.
\end{align*}
A different perhaps interesting identity comes from $(-\frac54,\frac14^3, \frac34^2)$. This identity can be obtained from the previous ones by solving for $\rpsis{3}{3}$ in the above identity, replacing the bilateral series below by that expression and then combining the balanced $\rphis{3}{2}$'s to an evaluation formula:
\begin{align*}
1&= \frac{(\frac{qt_1}{t_5},\frac{qt_1}{t_6};q) } { (\frac{q}{t_2t_5},\frac{q}{t_3t_5},\frac{q}{t_4t_5},\frac{q}{t_2t_6},\frac{q}{t_3t_6},\frac{q}{t_4t_6};q)   }
\\& \qquad \times \left( 
\frac{\theta( \frac{t_2x}{t_1},\frac{t_3x}{t_1},\frac{t_4x}{t_1},\frac{t_5x^2}{t_1},\frac{t_6x^2}{t_1};q)}{
\theta(\frac{x}{t_1^2}, \frac{x^3}{t_1^2}, t_5t_6x;q)}
+ \frac{ \theta( \frac{t_1t_2}{x^2},\frac{t_1t_3}{x^2},\frac{t_1t_4}{x^2},\frac{t_1t_5}{x},\frac{t_1t_6}{x};q)}
{\theta(\frac{t_1^2}{x^3}, \frac{1}{x^2}, t_5t_6x;q)}
\right)
\rphisx{3}{2}{t_1t_2,t_1t_3,t_1t_4 \\ \frac{qt_1}{t_5},\frac{qt_1}{t_6}}{q} \\
& \qquad + \frac{(t_1t_2,t_1t_3,t_1t_4,\frac{qt_1}{t_2x},\frac{qt_1}{t_3x},\frac{qt_1}{t_4x}, \frac{qx}{t_1t_5}, \frac{qx}{t_1t_6};q) \theta(\frac{t_5x^2}{t_1}, \frac{t_6x^2}{t_1};q)}
{(q,\frac{q}{t_2t_5},\frac{q}{t_3t_5}, \frac{q}{t_4t_5}, \frac{q}{t_2t_6}, \frac{q}{t_3t_6}, \frac{q}{t_4t_6},\frac{t_1^2}{x};q) \theta(t_5t_6x,\frac{x^3}{t_1^2};q)} \rpsisx{3}{3}{\frac{t_1^2}{x}, \frac{t_1t_5}{x}, \frac{t_1t_6}{x} \\ \frac{qt_1}{t_2x}, \frac{qt_1}{t_3x}, \frac{qt_1}{t_4x}}{q} \\
& \qquad + \frac{(t_1t_2,t_1t_3,t_1t_4,\frac{qx^2}{t_1t_2}, \frac{qx^2}{t_1t_3}, \frac{qx^2}{t_1t_4};q) \theta( \frac{t_1t_5}{x}, \frac{t_1t_6}{x}, \frac{t_5x^2}{t_1}, \frac{t_6x^2}{t_1};q)}
{(q,\frac{q}{t_2t_5}, \frac{q}{t_3t_5}, \frac{q}{t_4t_5}, \frac{q}{t_2t_6}, \frac{q}{t_3t_6}, \frac{q}{t_4t_6}, x^2,\frac{t_5x^2}{t_1}, \frac{t_6x^2}{t_1};q) \theta( \frac{t_1^2}{x^3}, t_5t_6x;q)} \rpsisx{3}{3}{\frac{t_1t_2}{x^2}, \frac{t_1t_3}{x^2}, \frac{t_1t_4}{x^2} \\
\frac{q}{x^2}, \frac{qt_1}{t_5x^2}, \frac{qt_1}{t_6x^2} }{q}.
\end{align*}

\subsection{Second degenerations}
\subsubsection{$(0^3,\frac13^3)$ and shifts}
The integral limits are a special case of the Askey-Wilson evaluation where one of the parameters has been set to 0, or a symmetry broken version of it:
\[
\frac{(q;q)}{2} \int \frac{(z^{\pm 2};q)}{(t_1z^{\pm 1}, t_2z^{\pm 1}, t_3z^{\pm 1};q)}
\frac{dz}{2\pi i z} = \frac{1}{(t_1t_2,t_1t_3,t_2t_3;q)}.
\] 
There are no series limits, as the correct location is a strict minimum of the first order behavior.

\subsubsection{$(-\frac12,\frac16^3,\frac12^2)$ and shifts}
There are no integral limits. A single unilateral series is obtained from $(-\frac12,\frac16^3,\frac12^2)$ and a single bilateral from $(-\frac56,\frac16^2, \frac12^3)$. This gives a limit of the $\rWs{6}{5}$ evaluation formula:
\[
\rphisx{5}{5}{t_1^2, \pm qt_1, t_1t_5,t_1t_6 \\ \pm t_1, \frac{qt_1}{t_5}, \frac{qt_1}{t_6},0}{\frac{q}{t_5t_6}} =
\frac{(qt_1^2, \frac{q}{t_5t_6};q)}{(\frac{qt_1}{t_5}, \frac{qt_1}{t_6};q)},
\]
and
\[
\rpsisx{5}{6}{\pm \frac{qt_1}{x}, \frac{t_1t_4}{x}, \frac{t_1t_5}{x}, \frac{t_1t_6}{x} \\
\pm \frac{t_1}{x}, \frac{qt_1}{t_4x}, \frac{qt_1}{t_5x}, \frac{qt_1}{t_6x},0}{ \frac{qt_1}{t_4t_5t_6x}}
=\frac{(q,\frac{q}{t_4t_5}, \frac{q}{t_4t_6}, \frac{q}{t_5t_6}, \frac{qt_1^2}{x^2}, \frac{qx^2}{t_1^2};q)}
{(\frac{qt_1}{t_4x}, \frac{qt_1}{t_5x}, \frac{qt_1}{t_6x}, \frac{qx}{t_1t_4}, \frac{qx}{t_1t_5}, \frac{qx}{t_1t_6};q)}.
\]

\subsubsection{$(-\frac13,0^2, \frac13^2,\frac23)$ and shifts}
The integral limit is found from either $(-\frac16^2, \frac16^2, \frac12^2)$ or $(-\frac13^2, 0, \frac13^2, 1)$. A specialization is obtained from $(-\frac13,0^2, \frac13^2,\frac23)$. 
This integral equals
\[
(q;q) \int \frac{\theta(\frac{t_1t_2w}{z}, wz;q)}{(\frac{t_1}{z}, \frac{t_2}{z}, u_1z,u_2z;q) 
\theta(t_1w,t_2w;q)} \frac{dz}{2\pi i z} 
= 
\frac{(t_1t_2u_1u_2;q)}{(t_1u_1,t_1u_2,t_2u_1,t_2u_2;q)}.
\]
A single unilateral series is obtained from $(-\frac13,0^2,\frac13^2,\frac23)$, giving the $q$-Gauss sum (II.8):
\[
\rphisx{2}{1}{t_1t_4, t_1t_5 \\ \frac{qt_1}{t_6}}{t_2t_3} = \frac{( \frac{q}{t_4t_6}, \frac{q}{t_5t_6};q)}{(t_2t_3, \frac{qt_1}{t_6};q)}.
\]
A sum of two (different type) unilaterals is obtained from $(-\frac16^2, \frac16^2, \frac12^2)$, which is (II.23):
\[
1= \frac{(t_2t_3, t_2t_4;q)}{(\frac{t_2}{t_1}, t_1t_2t_3t_4;q)} \rphisx{2}{1}{t_1t_3, t_1t_4 \\ \frac{qt_1}{t_2}}{q} 
+ (t_1 \leftrightarrow t_2).
\]
A sum of two bilaterals from $(-1,0,\frac13^2,\frac23^2)$ (Ex.~5.10, though it has typo's in \cite{GR}):
\begin{align*}
1 &= \frac{(t_1t_2,\frac{qt_1}{t_3x},\frac{qt_1}{t_4x}, \frac{qx}{t_1t_5}, \frac{qx}{t_1t_6};q) \theta( \frac{t_5x^2}{t_1}, \frac{t_6x^2}{t_1};q)}
{(q,\frac{q}{t_3t_5}, \frac{q}{t_4t_5}, \frac{q}{t_3t_6}, \frac{q}{t_4t_6};q) \theta(t_5t_6x,\frac{x^3}{t_1^2};q)}
\rpsisx{2}{2}{\frac{t_1t_5}{x}, \frac{t_1t_6}{x} \\ \frac{qt_1}{t_3x}, \frac{qt_1}{t_4x}}{q}
\\ & \qquad + 
\frac{(t_1t_2, \frac{qt_1}{t_5x^2}, \frac{qt_1}{t_6x^2}, \frac{qx^2}{t_1t_3}, \frac{qx^2}{t_1t_4} ;q) \theta( \frac{t_1t_5}{x}, \frac{t_1t_6}{x};q)}
{(q,\frac{q}{t_3t_5}, \frac{q}{t_4t_5}, \frac{q}{t_3t_6}, \frac{q}{t_4t_6};q) \theta(\frac{t_1^2}{x^3}, t_5t_6x;q)}
\rpsisx{2}{2}{\frac{t_1t_3}{x^2}, \frac{t_1t_4}{x^2} \\ \frac{qt_1}{t_5x^2}, \frac{qt_1}{t_6x^2}}{t_1t_2}.
\end{align*}
A sum of a bilateral and a unilateral from $(-\frac12,-\frac16,\frac16^2, \frac12, \frac56)$:
\begin{align*}
1 & = \frac{(t_2t_3,t_2t_4,t_1t_5,\frac{qt_1}{t_6x}, \frac{qx}{t_1t_3}, \frac{qx}{t_1t_4};q)}{(q,\frac{q}{t_3t_6}, \frac{q}{t_4t_6}, \frac{t_2x}{t_1};q) \theta(t_5t_6x;q)} \rpsisx{2}{2}{\frac{t_1t_3}{x}, \frac{t_1t_4}{x} \\ \frac{qt_1}{t_2x}, \frac{qt_1}{t_6x}}{q} 
\\ & \qquad + 
\frac{(t_1t_5,\frac{qt_2}{t_6};q) \theta(\frac{t_1t_3}{x}, \frac{t_1t_4}{x};q)}{(\frac{q}{t_3t_6}, \frac{q}{t_4t_6};q) \theta( \frac{t_1}{t_2x}, t_5t_6x;q)} \rphisx{2}{1}{t_2t_3,t_2t_4 \\ \frac{qt_2}{t_6}}{q}.
\end{align*}

\subsection{Third degeneration}
\subsubsection{$(0^2,\frac14^4)$ and shifts}
We only obtain integral limits which are the Askey-Wilson integral evaluations with two parameters set to 0 and the symmetry broken version of it:
\[
\frac{(q;q)}{2} \int \frac{(z^{\pm 2};q)}{(t_1z^{\pm 1}, t_2z^{\pm 1};q)} \frac{dz}{2\pi i z}
= \frac{1}{(t_1t_2;q)}.
\]

\subsubsection{$(-\frac12,\frac14^4,\frac12)$ and shifts}
A single unilateral series is obtained from the original vector:
\[
\rphisx{4}{5}{t_1^2, \pm q t_1, t_1t_6 \\ \pm t_1, \frac{qt_1}{t_6}, 0, 0}{ \frac{qt_1}{t_6}}
= \frac{(qt_1^2;q)}{(\frac{qt_1}{t_6};q)}.
\]
A single bilateral series from $(-\frac34, \frac14^3, \frac12^2)$: 
\[
\rpsisx{4}{6}{\pm \frac{qt_1}{x}, \frac{t_1t_5}{x}, \frac{t_1t_6}{x} \\
\pm \frac{t_1}{x}, \frac{qt_1}{t_5x}, \frac{qt_1}{t_6x},0,0}{ \frac{qt_1^2}{t_5t_6x^2}}
= \frac{(q,\frac{q}{t_5t_6}, \frac{qt_1^2}{x^2}, \frac{qx^2}{t_1^2};q)}
{(\frac{qt_1}{t_5x}, \frac{qt_1}{t_6x}, \frac{qx}{t_1t_5}, \frac{qx}{t_1t_6};q)}.
\]

\subsubsection{$(-\frac18^2,\frac18,\frac38^3)$ and shifts}
We get integral limits in three different vectors, which are all equivalent to or special cases of
\[
(q;q) \int \frac{\theta( \frac{t_1t_2w}{z}, wz;q)}{(\frac{t_1}{z}, \frac{t_2}{z}, uz;q)
\theta(t_1w,t_2w;q)} \frac{dz}{2\pi i z} = \frac{1}{(t_1u,t_2u;q)}.
\]
We also obtain series limits. A double unilateral series from the original vector:
\[
1=\frac{(t_2t_3;q)}{(\frac{t_2}{t_1};q)} \rphisx{2}{1}{t_1t_3, 0 \\ \frac{qt_1}{t_2}}{q} 
+ \frac{(t_1t_3;q)}{(\frac{t_1}{t_2};q)} \rphisx{2}{1}{t_2t_3, 0 \\ \frac{qt_2}{t_1}}{q}.
\]
Other series are impossible, because we are at a minimum.
 
%

\subsubsection{$(-\frac38,-\frac18,\frac18^2,\frac58^2)$ and shifts}
We only obtain series limits. A single unilateral from $(-\frac38,\frac18^3, \frac38, \frac58)$ (around $\frac38$)  (II.5):
\[
\rphisx{1}{1}{t_1t_5 \\ \frac{qt_1}{t_6}}{\frac{q}{t_5t_6}} = \frac{(\frac{q}{t_5t_6};q)}{(\frac{qt_1}{t_6};q)}.
\]
We obtain a sum of a unilateral and a bilateral from  the original vector (around $\frac18$):
\begin{align*}
1& = \frac{\theta( \frac{t_1t_3}{x}, \frac{t_1t_4}{x};q)}{\theta(\frac{t_1}{t_2x}, t_5t_6x;q)}
\rphisx{2}{1}{t_2t_3,t_2t_4 \\ 0}{q} 
\\ & \qquad 
+ \frac{(t_2t_3,t_2t_4,\frac{qx}{t_1t_3}, \frac{qx}{t_1t_4};q)}{(q,\frac{t_2x}{t_1};q) \theta(t_5t_6x;q)} 
\rpsisx{2}{2}{\frac{t_1t_3}{x}, \frac{t_1t_4}{x} \\ \frac{qt_1}{t_2x},0}{q}.
\end{align*}
A double bilateral series from $(-\frac38^2, \frac18^2, \frac58, \frac78)$
\begin{align*}
1 &= \frac{(\frac{qt_1}{t_6x}, \frac{qx}{t_1t_3}, \frac{qx}{t_1t_4};q) \theta(\frac{t_2t_3}{x}, \frac{t_2t_4}{x};q)}
{(q,\frac{q}{t_3t_6}, \frac{q}{t_4t_6};q) \theta( \frac{t_2}{t_1},t_5t_6x^2;q)}
\rpsisx{2}{2}{\frac{t_1t_3}{x}, \frac{t_1t_4}{x} \\ \frac{qt_1}{t_6x},0}{q} + (t_1\leftrightarrow t_2).
\end{align*}

\subsubsection{$(-\frac14,0^2,\frac14,\frac12^2)$ and shifts}
We obtain an integral limit only from this original vector, equal to 
\[
(q;q) \int \frac{\theta(\frac{t v}{z};q)}{(\frac{t}{z}, uz;q)} \frac{dz}{2\pi i z}
= \frac{(\frac{q}{v}, t uv;q)}{(tu;q)}.
\]
The original vector gives a single unilateral series as limit (II.3):
\[
\rphisx{1}{0}{t_1t_4 \\ - }{t_2t_3} = \frac{(t_1t_2t_3t_4;q)}{(t_2t_3;q)}.
\] 
A bilateral series can be obtained from  $(-\frac12, 0^2, \frac14,\frac12,\frac34)$ (II.29):
\[
\rpsisx{1}{1}{\frac{t_1t_4}{x} \\ \frac{qt_1}{t_6x}}{t_2t_3} = \frac{(q,\frac{q}{t_4t_6};q) \theta(t_5t_6x;q)}{(t_2t_3, t_1t_5, \frac{qt_1}{t_6x}, \frac{qx}{t_1t_4};q)}.
\]
\subsection{Fourth degeneration}

\subsubsection{$(0,\frac15^5)$ and shifts}
We only obtain an integral limit, which is the Askey-Wilson integral with 3 parameters set to 0 or a symmetry-broken version of it:
\[
\frac{(q;q)}{2} \int \frac{(z^{\pm 2};q)}{(t z^{\pm 1};q)} \frac{dz}{2\pi i z} = 1.
\]
We don't have any series limits.

\subsubsection{$(-\frac12, \frac{3}{10}^5)$ and shifts}
We only obtain series limits. A unilateral series from the original vector:
\[
\rphisx{ 3 }{5  }{t_1^2,\pm qt_1 \\ \pm t_1,0,0,0}{qt_1^2} = (qt_1^2;q).
\]
We also obtain a single bilateral from $(-\frac{7}{10}, \frac{3}{10}^4, \frac12)$:
\[
\rpsisx{3}{6}{\pm \frac{qt_1}{x}, \frac{t_1t_6}{x} \\ 
\pm \frac{t_1}{x}, \frac{qt_1}{t_6x},0,0,0 }{\frac{qt_1^3}{t_6x^3}} 
= \frac{(q,\frac{qt_1^2}{x^2}, \frac{qx^2}{t_1^2};q)}{(\frac{qt_1}{t_6x}, \frac{qx}{t_1t_6};q)}.
\]

\subsubsection{$(-\frac{1}{10}^2, \frac{3}{10}^4)$ and shifts}
This gives a symmetry-broken integral limit in the original vector, and the same limit in $(-\frac15^3, \frac25^2, \frac45)$:
\[
(q;q) \int \frac{\theta(\frac{t_1t_2w}{z}, wz;q)}{(\frac{t_1}{z},\frac{t_2}{z};q) \theta(t_1w,t_2w;q)} \frac{dz}{2\pi i z} =1.
\]
We also obtain a double unilateral series limit from the original vector:
\[
1 = 
\frac{1}{(\frac{t_2}{t_1};q)} \rphisx{2}{1}{0,0 \\ \frac{qt_1}{t_2}}{q} +(t_1 \leftrightarrow t_2).
\]
%

\subsubsection{$(-\frac25,\frac15^4, \frac35)$ and shifts}
We obtain a single unilateral limit from the original vector:
\[
\rphisx{0}{1}{- \\ \frac{qt_1}{t_6}}{\frac{qt_1}{t_6}} = \frac{1}{(\frac{qt_1}{t_6};q)}.
\]
From $(-\frac{3}{10}^2, \frac{1}{10}^2, \frac{7}{10}^2)$ we obtain a double bilateral series:
\[
1 = \frac{(\frac{qx}{t_1t_3}, \frac{qx}{t_1t_4};q) \theta( \frac{t_2t_3}{x}, \frac{t_2t_4}{x};q)}
{(q;q) \theta( \frac{t_2}{t_1}, t_5t_6x^2;q)} \rpsisx{2}{2}{\frac{t_1t_3}{x}, \frac{t_1t_4}{x} \\ 0,0}{q} + (t_1\leftrightarrow t_2).
\]
%

\subsubsection{$(-\frac15,0^2, \frac25^3)$ and shifts}
We obtain an integral limit in the original vector and the identical limit in $(-\frac{1}{10}^4, \frac12^2)$:
\[
(q;q) \int \frac{\theta(\frac{tu}{z};q)}{(\frac{t}{z};q)} \frac{dz}{2\pi i z} = (\frac{q}{u};q).
\]
A unilateral series limit is obtained also from the original vector (II.1):
\[
\rphisx{1}{0}{0 \\ -}{t_2t_3} = \frac{1}{(t_2t_3;q)}.
\]

\subsubsection{$(-\frac{3}{10}, \frac1{10}^3, \frac12^2)$ and shifts}
We obtain a unilateral series from the original vector (II.2):
\[
\rphisx{0}{0}{-\\ -}{\frac{q}{t_5t_6}} = (\frac{q}{t_5t_6};q).
\]
A single bilateral series is obtained from $(-\frac25,0^2,\frac15,\frac35^2)$: 
\[
\rpsisx{1}{1}{\frac{t_1t_4}{x} \\ 0}{t_2t_3} = \frac{(q;q) \theta( t_5t_6x;q)}{(t_2t_3,\frac{qx}{t_1t_4};q)}.
\]

\subsection{Fifth degeneration}
\subsubsection{$(\frac16^6)$ and shifts}
We obtain a symmetric integral limit:
\begin{equation}\label{eqlast}
\frac{(q;q)}{2} \int (z^{\pm 2};q) \frac{dz}{2\pi i z} = 1.
\end{equation}
From $(-\frac13^3, \frac23^3)$ we obtain a symmetry broken version of this integral. 

\subsubsection{$(-\frac1{12}^3, \frac{5}{12}^3)$ and shifts}
Taking the limit after symmetry breaking gives
\[
(q;q) \int \theta( \frac{t_1t_2t_3}{z};q) \frac{dz}{2\pi i z} =1.
\]
The integral \eqref{eqlast} can be rewritten to equal the sum of two instances of this integral.

\subsubsection{$(-\frac23,\frac13^5)$ and shifts}
We get a bilateral series limit:
\[
\rpsisx{2}{6}{\pm \frac{qt_1}{x} \\ \pm \frac{t_1}{x}, 0,0,0,0}{\frac{qt_1^4}{x^4}}
= (q,\frac{qt_1^2}{x^2}, \frac{qx^2}{t_1^2};q).
\]
Writing this bilateral series explicitly, we see that this identity is the sum of two versions of Jacobi's triple product formula, which we also find below.

\subsubsection{$(-\frac{5}{12}, \frac{1}{12}^3,\frac{7}{12}^2)$ and shifts}
We obtain the bilateral series limit, which is just Jacobi's triple product formula (II.28): 
\[
\rpsisx{0}{1}{- \\ 0}{\frac{q}{xt_5t_6}} = 
\sum_{k\in \mathbb{Z}} q^{\binom{k}{2}} \left(- \frac{q}{xt_5t_6} \right)^k 
= (q;q) \theta(t_5t_6x;q).
\]


\begin{thebibliography}{99}
\bibitem{vdBR1} F.J. van de Bult and E.M. Rains, \textit{Basic hypergeometric functions as limits of elliptic hypergeometric functions}, SIGMA \textbf{5} (2009), Paper 059, 31 pp.

\bibitem{vdBR2} F.J. van de Bult and E.M. Rains, \textit{Limits of elliptic hypergeometric biorthogonal functions}, preprint, arxiv:1110.1456, 27pp.

\bibitem{vdBR3} F.J. van de Bult and E.M. Rains, \textit{Limits of multivariate elliptic hypergeometric biorthogonal functions}, preprint, arxiv:1110.1458, 29 pp.

\bibitem{vdBR4} F.J. van de Bult and E.M. Rains, \textit{Limits of multivariate elliptic beta integrals and related bilinear forms}, preprint, arxiv:1110.1460, 31 pp.

\bibitem{FT} I.N. Frenkel, V.G. Turaev, \textit{Elliptic solutions of the Yang-Baxter equation and modular hypergeometric functions}, in V.I. Arnold, I.M. Gelfand, V.S. Retakh, M. Smirnov (eds.), The Arnold-Gelfand mathematical seminars, Birkh\"auser, Boston, 1997, 171--204.

\bibitem{GR} G. Gasper, M. Rahman, \textit{Basic Hypergeometric Series}, Encyclopedia of Mathematics and its Applications, Vol. 96, 2nd ed., Cambridge University Press, Cambridge (2004).

\bibitem{Hum} J.E. Humphreys, \textit{Reflections groups and Coxeter groups}, Cambridge studies in advanced mathematics, Vol. 29, Cambridge University Press, Cambridge (1990). 

\bibitem{KLS} R. Koekoek, P.A. Lesky and R.F. Swarttouw, \textit{Hypergeometric orthogonal polynomials and their $q$-analogues}, Springer, Berlin (2010).

\bibitem{Rainsabelian} E.M. Rains, \textit{$BC_n$ symmetric abelian functions}, Duke Math. J. \textbf{135} (2006), no. 1, 99--180.

\bibitem{Rainstrafo} E.M. Rains, \textit{Transformations of elliptic hypergeometric integrals}, Ann. Math. (2) \textbf{171} (2010), no. 1, 169--243.

\bibitem{Rainslwood} E.M. Rains, \textit{Elliptic Littlewood identities}, J. Combin. Theory Ser. A \textbf{119} (2012), no. 7, 1558--1609.

\bibitem{Spi1} V.P. Spiridonov, \textit{On the elliptic beta function}, Uspekhi Mat. Nauk \textbf{56} (2001), no. 1 (337), 181--182 (English transl: Russian Math. Surveys \textbf{56} (2001), no. 1, 185--186).

\bibitem{Spi2} V.P. Spiridonov, \textit{Theta hypergeometric integrals}, Algebra i Analiz \textbf{15} (2003), no. 6, 161--215 (English transl: St. Petersburg Math. J. \textbf{15} (2004), no. 6, 929--967).

\bibitem{Stok} J.V. Stokman, \textit{On $BC$ type hypergeometric orthogonal polynomials}, Trans. Amer. Math. Soc. \textbf{352} (2000), no. 4, 1527--1579.

\end{thebibliography}
\end{document}